\newtheorem{thm}{Theorem}
\newtheorem{prop}[thm]{Proposition}
\newtheorem{cor}[thm]{Corollary}
\newtheorem{lemma}[thm]{Lemma}
\newtheorem{conj}{Conjecture} 
\theoremstyle{definition}
\newtheorem{defn}[thm]{Definition}
\newtheorem{ex}[thm]{Example}
\theoremstyle{remark}
\newtheorem{remark}[thm]{Remark}
\numberwithin{equation}{section}
\begin{document}

\title{THE $\mathbb{C}$-MOTIVIC WEDGE SUBALGEBRA}

\author{Hieu Thai}
\address{Department of Mathematics\\
Wayne State University\\
Detroit, MI 48202, USA}
\email{hieu.thai@wayne.edu}
\thanks{}

\subjclass[2010]{55S10; 55T15}

\keywords{}

\date{}

\begin{abstract}We describe some regular behavior in the motivic wedge, which is a subalgebra of the cohomology Ext$_{\mathbf{A}}(\mathbb{M}_2,\mathbb{M}_2)$ of the $\mathbb{C}$-motivic Steenrod algebra. The key tool is to compare motivic computations to classical computations, to Ext$_{\mathbf{A}(2)}(\mathbb{M}_2,\mathbb{M}_2)$, or to $h_1$-localization of Ext$_{\mathbf{A}}(\mathbb{M}_2,\mathbb{M}_2)$. 
	
We also give a conjecture on the behavior of the family $e_0^tg^k$ in Ext$_{\mathbf{A}}(\mathbb{M}_2,\mathbb{M}_2)$ which raises naturally from the study of the motivic wedge. 
\end{abstract}

\maketitle

\section{Introduction}
Computing the stable homotopy groups of the sphere spectrum is one of the most important problems of stable homotopy theory. Focusing on the 2-complete stable homotopy groups instead of the integral homotopy groups, the Adams spectral sequence appears to be one of the most effective tools to compute the homotopy groups. The spectral sequence has been studied by F. Adams, M. Mahowald, P. May, M. Tangora and others \cite{M-thesis} \cite{T-thesis} \cite{diff-A} \cite{diff-A-2} \cite{diff-A-B}.

In 1999, Voevodsky and Morel introduced motivic homotopy theory \cite{VM-scheme}. One of its consequences is the realization that almost any object studied in classical algebraic topology could be given a motivic analog. In particular, we can define the motivic Steenrod algebra $\mathbf{A}$ \cite{Vod-S1}, the motivic stable homotopy groups of spheres \cite{VM-scheme} and the motivic Adams spectral sequence \cite{I-MASS}. In the motivic perspective, there are many more non-zero classes in the motivic Adams spectral sequence, which allows the detection of otherwise elusive phenomena. Also, the additional motivic weight grading can eliminate possibilities which appear plausible in the classical perspective.  

Let $\mathbb{M}_2$ denote the motivic cohomology of a point, which is isomorphic to $\mathbb{F}_2[\tau]$ where $\tau$ has bidegree $(0,1)$ \cite{Vod-Z2}. The motivic Steenrod algebra $\mathbf{A}$ is the $\mathbb{M}_2$-algebra generated by elements $\mathrm{Sq}^{2k}$ and $\mathrm{Sq}^{2k-1}$ for all $k\ge 1$, of bidegrees $(2k,k)$ and $(2k-1,k-1)$ respectively, subject to Adem relations \cite{Vod-S1} \cite{Vod-S2}. Let Ext$_{\mathbf{A}}(\mathbb{M}_2,\mathbb{M}_2)$ denote the cohomology of the motivic Steenrod algebra. To run the motivic Adams spectral sequence, one begins with Ext$_{\mathbf{A}}(\mathbb{M}_2,\mathbb{M}_2)$. The cohomology Ext$_{\mathbf{A}}(\mathbb{M}_2,\mathbb{M}_2)$ has an $\mathbb{M}_2$-algebra structure. Inverting $\tau$ in Ext$_\mathbf{A}(\mathbb{M}_2,\mathbb{M}_2)$ gives the cohomology Ext$_{\mathbf{A}_{cl}}(\mathbb{F}_2,\mathbb{F}_2)$ of the classical Steenrod algebra $\mathbf{A}_{cl}$ \cite{I-stem}. Given a classical element, there are many corresponding motivic elements. We typically want to find the corresponding element with the highest weight. For example, the classical element $g$ corresponds to the motivic elements $\tau^k g$ for all $k \geq 1$. The element $\tau g$ has weight $11$, but there is no motivic element of weight $12$ that corresponds to the classical element $g$.

The algebra Ext$_{\mathbf{A}}(\mathbb{M}_2,\mathbb{M}_2)$ is infinitely generated and irregular. A natural approach is to look for systematic phenomena in Ext$_{\mathbf{A}}(\mathbb{M}_2,\mathbb{M}_2)$. One potential candidate is the wedge family in Ext$_{\mathbf{A}}(\mathbb{M}_2,\mathbb{M}_2)$.

The classical wedge family was studied by M. Mahowald and M. Tangora \cite{Maho-Tango-wedge}. It is a subset of the cohomology Ext$_{\mathbf{A}_{\mathrm{cl}}}(\mathbb{F}_2,\mathbb{F}_2)$ of the classical Steenrod algebra, consisting of non-zero elements $P^ig^j \lambda$ and $g^j t$ in which $\lambda$ is in $\boldsymbol\Lambda$, $t$ is in $\mathbf{T}$, $i\ge 0$ and $j\ge 0$. The sets $\boldsymbol\Lambda $ and $\mathbf{T}$ are specific subsets of Ext$_{\mathbf{A}_{\mathrm{cl}}}(\mathbb{F}_2,\mathbb{F}_2)$. The wedge family gives an infinite  wedge-shaped diagram inside the cohomology of the classical Steenrod algebra, which fills out an angle with vertex at $g^2$ in degree (40,8) (i.e. $g^2$ has stem 40 and Adams filtration 8), bounded above by the line $f=\frac{1}{2}s-12$, parallel to the Adams edge \cite{Adams-ped}, and bounded below by the line $s=5f$, in which $f$ is the Adams filtration and $s$ is the stem. The wedge family is a large piece of Ext$_{\mathbf{A}_{\mathrm{cl}}}(\mathbb{F}_2,\mathbb{F}_2)$ which is regular, of considerable size and easy to understand.  

Using this idea we build the motivic version of the wedge. However, it appears to be more complicated than the classical one. The highest weights of the motivic wedge elements follow a somewhat irregular pattern. We will discuss this irregularity in more detail later.

Let $\mathbf{A}(2)$ denote the $\mathbb{M}_2$-subalgebra of $\mathbf{A}$ generated by $\mathrm{Sq}^1, \mathrm{Sq}^2$ and $\mathrm{Sq}^4$. Let Ext$_{\mathbf{A}(2)}(\mathbb{M}_2,\mathbb{M}_2)$ denote the cohomology of $\mathbf{A}(2)$. The finitely generated algebra Ext$_{\mathbf{A}(2)}(\mathbb{M}_2,\mathbb{M}_2)$ is fully understood by \cite{I-A2}. We use a new technique of comparison to Ext$_{\mathbf{A}(2)}(\mathbb{M}_2,\mathbb{M}_2)$ which makes the proof of the non-triviality of the wedge elements easy. We consider the ring homomorphism $\phi$ from Ext$_{\mathbf{A}}(\mathbb{M}_2,\mathbb{M}_2)$ to Ext$_{\mathbf{A}(2)}(\mathbb{M}_2,\mathbb{M}_2)$ induced by the inclusion from $\mathbf{A}(2)$ to $\mathbf{A}$. We use the map $\phi$ to detect structure in Ext$_{\mathbf{A}}(\mathbb{M}_2,\mathbb{M}_2)$. Most of the elements studied in this article have non-zero images via $\phi $ \cite{I-A2}. Therefore, they are all non-trivial elements in Ext$_{\mathbf{A}}(\mathbb{M}_2,\mathbb{M}_2)$. 

We define set-valued operations $\mathbf{P}$ and $\mathbf{g}$ on Ext$_{\mathbf{A}}(\mathbb{M}_2,\mathbb{M}_2)$. Classically, $g$ is an element of the cohomology of the classical Steenrod algebra. However, this is not true motivically. Rather, $\tau g$ is an element in Ext$_{\mathbf{A}}(\mathbb{M}_2,\mathbb{M}_2)$, while $g$ itself does not survive the motivic May spectral sequence. Consequently, multiplication by $g$ does not make sense motivically. Also, $P$ is not an element in Ext$_\mathbf{A}(\mathbb{M}_2,\mathbb{M}_2)$ either. We instead consider the set-valued operations $\mathbf{P}$ and $\mathbf{g}$ whose actions can be seen as multiplications by $P$ and $g$ in Ext$_{\mathbf{A}(2)}(\mathbb{M}_2,\mathbb{M}_2)$ respectively. 

For any $\lambda$ in Ext$_{\mathbf{A}}(\mathbb{M}_2,\mathbb{M}_2)$, $i \ge 0$ and $j\ge 0$, let $\mathbf{P}^i\mathbf{g}^j \lambda$ be the set consisting of all elements $x$ in Ext$_\mathbf{A}(\mathbb{M}_2,\mathbb{M}_2)$ such that $\phi(x) = P^ig^j \phi(\lambda)$ in Ext$_{\mathbf{A}(2)}(\mathbb{M}_2,\mathbb{M}_2)$. 

We define the wedge family via the actions of $\mathbf{P}$ and $\mathbf{g}$. The wedge is the set consisting of all elements in $\mathbf{P}^i\mathbf{g}^j \lambda$ with $i \ge 0$ and $j\ge 0$, where $\lambda$ is contained in a specific 16-element subset $\boldsymbol\Lambda$ of Ext$_{\mathbf{A}}(\mathbb{M}_2,\mathbb{M}_2)$ to be defined in Table \ref{lambda}.

The motivic wedge family takes the same position and same shape as the classical one (Figure \ref{figure}). However the vertex of the motivic wedge is at $\tau g^2$ in degree $(40,8,23)$ having weight 23. Note that $g^2$ in degree $(40,8,24)$ does not survive the motivic May spectral sequence \cite{I-stem}. Our main result, Theorem \ref{def-wedge}, states that the subsets $\mathbf{P}^i\mathbf{g}^j\lambda$ are non-empty and consist of non-zero elements for all $\lambda$ in $\boldsymbol\Lambda$. 

However, our main result is not optimal, in the sense that there exist elements of weight greater than the weight of elements in $\mathbf{P}^i \mathbf{g}^j \lambda$ for some values of $i$, $j$, and $\lambda$. Some such elements are listed in Table \ref{opt}.

We can not even conjecture the optimal result in general.  However, we know a bit more about elements in the set $e_0^t \mathbf{g}^k$ for $t \geq 0$ and $k \geq 0$, which are part of the wedge.  We will show that $\tau e_0^t \mathbf{g}^k$ is non-empty for all $t \geq 0$ and $k \geq 0$.  We do not know whether $e_0^t \mathbf{g}^k$ is non-empty in general, but we make the following conjecture.

\begin{conj}
The set $e_0^t\mathbf{g}^k$ is non-empty if and only if $k=\sum ^{t}_{i=0}2^{n_i}-t$ for some integer $n_i \ge 1$. \end{conj}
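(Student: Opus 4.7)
The strategy is to handle the ``if'' and ``only if'' directions separately, using multiplicative structure for the former and the motivic May spectral sequence for the latter. Throughout, I would lean on the fact that $\tau e_0^t \mathbf{g}^k$ is already known non-empty, so the question reduces to whether a chosen lift of $\tau e_0^t g^k$ in $\mathrm{Ext}_{\mathbf{A}}(\mathbb{M}_2,\mathbb{M}_2)$ is $\tau$-divisible modulo $\tau$-torsion in the relevant tridegree.

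For the ``if'' direction, suppose $k + t = \sum_{i=0}^{t} 2^{n_i}$ with each $n_i \geq 1$. Then, inside $\mathrm{Ext}_{\mathbf{A}(2)}(\mathbb{M}_2,\mathbb{M}_2)$, we have the factorization
\[
e_0^t g^k \;=\; g^{2^{n_t}} \cdot \prod_{i=0}^{t-1} \bigl(e_0 \, g^{2^{n_i} - 1}\bigr),
\]
so it would suffice to exhibit motivic lifts of each elementary factor $g^{2^n}$ and $e_0 g^{2^n - 1}$ for $n \geq 1$ and multiply them. The smallest cases should already be visible in Isaksen's tables for $\mathrm{Ext}_{\mathbf{A}}(\mathbb{M}_2,\mathbb{M}_2)$, and for larger $n$ I would induct using Massey product representatives of $g^{2^n}$, upgrading classical formulas motivically via the May spectral sequence while keeping careful track of weights.

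For the ``only if'' direction, the plan is to compute the motivic May $E_\infty$-page in the tridegree of $e_0^t g^k$ and show that, when $k+t$ is \emph{not} a sum of $t+1$ powers of $2$ each at least $2$, every class in that tridegree is either $\tau$-torsion or maps under $\phi$ to a class of lower ``$g$-content''. The combinatorial condition is to be recovered as the precise digit-sum shadow of the pattern of $\tau$-divisibility among $g$-multiples, generalizing the observations that $g$ and $g^2$ themselves do not lift while $g^{2^n}$ does.

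The main obstacle is the ``only if'' direction, since $\mathrm{Ext}_{\mathbf{A}}(\mathbb{M}_2,\mathbb{M}_2)$ is not completely known in the required range, and a uniform argument across all $(t,k)$ requires a structural theorem rather than case-by-case verification. Two promising tools are $h_1$-localization, which linearizes the $g$-action and reveals the $\tau$-divisibility obstructions cleanly, and the cofiber-of-$\tau$ machinery relating $\mathrm{Ext}_{\mathbf{A}}$ to the Adams--Novikov $E_2$-page, which trades $\tau$-divisibility for $v_1,v_2$-divisibility of the corresponding Adams--Novikov class. Either route would need to produce the exact binary decomposition condition from an underlying algebraic statement, which I expect to be the hard core of the problem.
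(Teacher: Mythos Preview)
The statement you are attempting to prove is labelled a \emph{Conjecture} in the paper and is not proved there; the paper gives only partial results. For the ``only if'' direction the paper does have a complete argument: using $h_1$-localization it shows that $e_0^t\mathbf{g}^k$ is empty whenever $\alpha(t+k)>t$. This is exactly one of the two tools you named, and it settles that direction without any appeal to the motivic May spectral sequence or the cofiber of $\tau$. For the ``if'' direction, the paper only reduces the question to the non-emptiness of the single family $e_0\mathbf{g}^{2^n-1}$ for all $n$, and this it leaves as an open conjecture; only the cases $n\le 2$ are verified by explicit low-dimensional computation.

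Your proposed factorization for the ``if'' direction contains a concrete error. You write
\[
e_0^t g^k \;=\; g^{2^{n_t}} \cdot \prod_{i=0}^{t-1} \bigl(e_0 \, g^{2^{n_i} - 1}\bigr)
\]
and plan to lift each factor, asserting in particular that $g^{2^n}$ lifts motivically. This is false (and even internally inconsistent with your own remark that $g^2=g^{2^1}$ does not lift). The paper proves that $\mathbf{g}^j$ is empty for every $j\ge 1$: any lift would have Chow degree zero and hence correspond, under the Chow-degree-zero isomorphism with $\mathrm{Ext}_{\mathrm{cl}}$, to a non-zero classical class in degree $(8j,4j)$, and there are none above the Adams vanishing line. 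So no power of $g$ by itself admits a motivic lift, and your factorization cannot be lifted term by term. The paper's reduction avoids this by absorbing every $g$-power into an $e_0$: one writes $e_0^t g^k=\prod_i e_0\, g^{2^{n_i}-1}$ as a product of exactly $t$ factors, each carrying one copy of $e_0$, and then asks whether each $e_0\mathbf{g}^{2^{n_i}-1}$ is non-empty. (Relatedly, the displayed conjecture in the introduction, with $t+1$ summands and $n_i\ge 1$, appears to be an indexing slip; the form used later in the paper has $t$ summands with $n_i\ge 0$, equivalent to $\alpha(k+t)\le t$, and that version is consistent both with the reduction and with the base case $k=0$.)
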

The conjecture holds if and only if $e_0\mathbf{g}^{2^n-1}$ is non-empty for all $n\ge 1$, since $$e_0^t\mathbf{g}^k \supseteq e_0\mathbf{g}^{2^{n_1}-1}  \cdots  e_0\mathbf{g}^{2^{n_t}-1}.$$

By explicit computations we know that $e_0, e_0\mathbf{g}$ and $e_0\mathbf{g}^3$ are non-empty and $e_0\mathbf{g}^2$ and $e_0\mathbf{g}^4$ are empty \cite{I-stem}. This means that the subsets $e_0 \mathbf{g}^k$ are non-empty sometimes but empty other times. The analogous classical question is trivial, since $e_0^t g^k$ is a product of $e_0^t$ and $g^k$.

\subsection{Organization}
The article contains four sections. In the second section, we recall techniques of comparison to classical computations, to Ext$_{\mathbf{A}(2)}(\mathbb{M}_2,\mathbb{M}_2)$ and to $h_1$-localization of Ext$_\mathbf{A}(\mathbb{M}_2,\mathbb{M}_2)$. In the third section, we introduce our main result on the motivic wedge. In the last section, we study the family $e_0^t\mathbf{g}^k$ in Ext$_\mathbf{A}(\mathbb{M}_2,\mathbb{M}_2)$ and give a conjecture on its behavior. 
\subsection{Acknowledgements}
I would like to acknowledge the invaluable assistance of Daniel Isaksen. I also want to thank Robert Bruner, Andrew Salch and Bogdan Gheorghe for useful discussions. 

\subsection{Notation}
We will use the following notation. 
\begin{enumerate}
	\item $\mathbb{M}_2=\mathbb{F}_2[\tau]$ is the mod 2 motivic cohomology of a point, where $\tau$ has bidegree (0,1).
	\item $\mathbf{A}$ is the mod 2 motivic Steenrod algebra over $\mathbb{C}$.
	\item $\mathbf{A}(2)$ is the $\mathbb{M}_2$-subalgebra of $\mathbf{A}$ generated by $\mathrm{Sq}^1, \mathrm{Sq}^2$ and $\mathrm{Sq}^4$.
	\item $\mathbf{A}_{\mathrm{cl}}$ is the mod 2 classical Steenrod algebra. 
	\item Ext is the trigraded ring Ext$_{\mathbf{A}}(\mathbb{M}_2,\mathbb{M}_2)$, the cohomology of the motivic Steenrod algebra.
	\item Ext$_{\mathbf{A}(2)}$ is the trigraded ring Ext$_{\mathbf{A}(2)}(\mathbb{M}_2,\mathbb{M}_2)$, the cohomology of the $\mathbb{M}_2$-subalgebra of $\mathbf{A}$ generated by $\mathrm{Sq}^1, \mathrm{Sq}^2$ and $\mathrm{Sq}^4$.
	\item Ext$_{\mathrm{cl}}$ is the bigraded ring Ext$_{\mathbf{A}_{\mathrm{cl}}}(\mathbb{F}_2,\mathbb{F}_2)$, the cohomology of the classical Steenrod algebra.
	\item We use the notation of \cite{more-stem} for elements in Ext. 
	\item We use the notation of \cite{I-A2} for elements in Ext$_{\mathbf{A}(2)}$, except that we use $a$ and $n$ instead of $\alpha$ and $\nu$ respectively. 
	\item An element $x$ in Ext has degree of the form $(s,f,w)$ where:
	\begin{enumerate}\item $f$ is the Adams filtration, i.e., the homological degree.
		\item $s+f$ is the internal degree, i.e., corresponds to the first coordinate in the bidegrees of $\mathbf{A}$.
		\item $s$ is the stem, i.e., the internal degree minus the Adams filtration.
		\item $w$ is the motivic weight.
	\end{enumerate}
	\item The Chow degree of an element of degree $(s,f,w)$ is $s+f-2w$. 
	\item The coweight of an element of degree $(s,f,w)$ is $s-w$.
	
\end{enumerate}

\section{Comparison Criteria}

We know Ext$_{\mathrm{cl}}$ \cite{M-thesis} \cite{T-thesis} and Ext$_{\mathbf{A}(2)}$ \cite{I-A2} quite well. Computations in Ext can be studied via the relation with Ext$_{\mathrm{cl}}$ and Ext$_{\mathbf{A}(2)}$ in certain cases.

The following theorem plays a key role in comparing the motivic and the classical computations, saying that they become the same after inverting $\tau$.
\begin{thm}\cite{I-MASS} There is an isomorphism of rings
	$$\mathrm{Ext}\otimes _{\mathbb{M}_2} \mathbb{M}_2[\tau ^{-1}] \cong \mathrm{Ext}_{{\mathrm{cl}}} \otimes _{\mathbb{F}_2} \mathbb{F}_2[\tau, \tau ^{-1}].$$
\end{thm}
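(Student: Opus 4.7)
The plan is to reduce the computation of $\tau$-inverted motivic Ext to classical Ext in three steps: first identify $\mathbf{A}[\tau^{-1}]$ as a classical object, then show that $\tau$-localization commutes with Ext computation, and finally apply flat base change.

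First I would analyze the structure of $\mathbf{A}[\tau^{-1}]$ itself. Voevodsky's presentation of $\mathbf{A}$ describes it by generators $\mathrm{Sq}^{2k}$, $\mathrm{Sq}^{2k-1}$ subject to motivic Adem relations, which differ from the classical Adem relations only by correction terms involving positive powers of $\tau$. After inverting $\tau$ the bigrading collapses to the classical grading (up to a $\tau^{\pm 1}$ factor), the correction terms can be absorbed, and one obtains a Hopf algebra isomorphism $\mathbf{A}[\tau^{-1}] \cong \mathbf{A}_{\mathrm{cl}} \otimes_{\mathbb{F}_2} \mathbb{F}_2[\tau,\tau^{-1}]$, together with $\mathbb{M}_2[\tau^{-1}] \cong \mathbb{F}_2[\tau,\tau^{-1}]$.

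Next I would choose a free resolution $P_\bullet \to \mathbb{M}_2$ of $\mathbb{M}_2$ by $\mathbf{A}$-modules. Since $\tau$ is central in $\mathbf{A}$ and localization at a central element is exact, $P_\bullet[\tau^{-1}]$ is a free resolution of $\mathbb{M}_2[\tau^{-1}]$ over $\mathbf{A}[\tau^{-1}]$. For a finitely generated free $\mathbf{A}$-module $\mathbf{A}^n$ in each degree of the resolution we have the natural identification
$$\mathrm{Hom}_{\mathbf{A}}(\mathbf{A}^n,\mathbb{M}_2)\otimes_{\mathbb{M}_2}\mathbb{M}_2[\tau^{-1}] \cong \mathrm{Hom}_{\mathbf{A}[\tau^{-1}]}(\mathbf{A}[\tau^{-1}]^n,\mathbb{M}_2[\tau^{-1}]),$$
so applying $-\otimes_{\mathbb{M}_2}\mathbb{M}_2[\tau^{-1}]$ to the cochain complex $\mathrm{Hom}_{\mathbf{A}}(P_\bullet,\mathbb{M}_2)$ gives $\mathrm{Hom}_{\mathbf{A}[\tau^{-1}]}(P_\bullet[\tau^{-1}],\mathbb{M}_2[\tau^{-1}])$. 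Because $\mathbb{M}_2 \to \mathbb{M}_2[\tau^{-1}]$ is flat, localization commutes with taking cohomology, and we conclude
$$\mathrm{Ext}_{\mathbf{A}}(\mathbb{M}_2,\mathbb{M}_2)\otimes_{\mathbb{M}_2}\mathbb{M}_2[\tau^{-1}] \cong \mathrm{Ext}_{\mathbf{A}[\tau^{-1}]}(\mathbb{M}_2[\tau^{-1}],\mathbb{M}_2[\tau^{-1}]).$$
Finally, the first step identifies the right-hand side with $\mathrm{Ext}$ over $\mathbf{A}_{\mathrm{cl}}\otimes_{\mathbb{F}_2}\mathbb{F}_2[\tau,\tau^{-1}]$ of $\mathbb{F}_2[\tau,\tau^{-1}]$, and flat base change along $\mathbb{F}_2 \to \mathbb{F}_2[\tau,\tau^{-1}]$ yields $\mathrm{Ext}_{\mathrm{cl}} \otimes_{\mathbb{F}_2} \mathbb{F}_2[\tau,\tau^{-1}]$; this last step again uses a free $\mathbf{A}_{\mathrm{cl}}$-resolution of $\mathbb{F}_2$ and flatness.

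The main obstacle is the first step: one must verify directly from Voevodsky's explicit formulas that every correction term in a motivic Adem relation is killed in $\mathbf{A}[\tau^{-1}]$ by rescaling, so that inverting $\tau$ recovers exactly the classical Hopf algebra tensored up. The homological algebra of steps two and three is then formal from exactness of localization and flatness.
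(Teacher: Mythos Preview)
The paper does not prove this theorem; it is quoted from \cite{I-MASS} (Dugger--Isaksen) as background, so there is no ``paper's own proof'' to compare against. Your sketch is essentially the argument that appears in that reference, and the overall architecture---identify $\mathbf{A}[\tau^{-1}]$ with $\mathbf{A}_{\mathrm{cl}}\otimes_{\mathbb{F}_2}\mathbb{F}_2[\tau^{\pm 1}]$, then use exactness of localization and flat base change to transport Ext---is correct.

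One remark on your first step: your description of the motivic Adem relations as ``classical plus correction terms in positive powers of $\tau$'' is slightly off. Over $\mathbb{C}$, some classical terms in an Adem relation acquire a $\tau$ coefficient motivically (rather than new terms appearing), so inverting $\tau$ lets you divide those coefficients away and recover the classical relation after a change of basis. In practice the cleanest route is through the dual: the motivic dual Steenrod algebra is $\mathbb{M}_2[\xi_i,\tau_j]/(\tau_j^2 - \tau\xi_{j+1})$, and once $\tau$ is a unit the relations $\tau_j^2=\tau\xi_{j+1}$ make the $\xi_{j+1}$ redundant, giving an isomorphism with the classical dual Steenrod algebra base-changed to $\mathbb{F}_2[\tau^{\pm 1}]$. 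Either way the conclusion is the same, and the homological algebra you describe in steps two and three is indeed formal.
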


Furthermore, the part of Ext at Chow degree 0 is isomorphic to Ext$_{\mathrm{cl}}$.
\begin{thm}\label{Chow}\cite{I-stem} There is an isomorphism from $\mathrm{Ext}_{\mathrm{cl}}$ to the subalgebra of $\mathrm{Ext}$ consisting of elements in degrees $(s,f,w)$ such that $s+f-2w=0$. This isomorphism takes classical elements of degrees $(s,f)$ to motivic elements of degrees $(2s+f,f,s+f)$, and it preserves all higher structure including products, squaring operations, and Massey products.
\end{thm}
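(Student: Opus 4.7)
The plan is to prove the isomorphism at the chain level, by identifying a subcomplex of the motivic cobar complex with the classical cobar complex. The previous theorem supplies the $\tau$-inverted picture, but the Chow-degree-$0$ comparison requires a direct chain-level identification, not merely a $\tau$-inversion argument.

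First, I would set up the motivic dual Steenrod algebra $\mathbf{A}_* = \mathbb{M}_2[\tau_i, \xi_j]/(\tau_i^2 - \tau\xi_{i+1})$ with Voevodsky's bidegrees $|\tau_i| = (2^{i+1}-1,\, 2^i-1)$ and $|\xi_j| = (2^{j+1}-2,\, 2^j-1)$ in (internal, weight). Extending the Chow-degree formula to $\mathbf{A}_*$ by $\mathrm{Ch}(t,w) = t-2w$, each $\tau_i$ has Chow degree $1$, each $\xi_j$ has Chow degree $0$, and $\tau \in \mathbb{M}_2$ has Chow degree $-2$. In the reduced cobar complex, Chow degree is additive across tensor factors, so the Chow-$0$ subspace is spanned by $\mathbb{F}_2$-linear cobar chains built from monomials in the $\xi_j$'s alone: any $\tau_i$ factor would raise the Chow degree, and any $\tau$-coefficient would lower it. This subspace is closed under the cobar differential, since the Milnor-type comultiplication $\Delta(\xi_j) = \sum_k \xi_{j-k}^{2^k} \otimes \xi_k$ stays inside the polynomial subring $\mathbb{F}_2[\xi_1, \xi_2, \ldots]$ and involves no $\tau_i$ corrections.

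Second, this Chow-$0$ sub-DGA is canonically isomorphic to the classical cobar complex for $\mathbf{A}_{\mathrm{cl},*} = \mathbb{F}_2[\xi_j]$ with the classical Milnor diagonal, and passing to cohomology yields the ring isomorphism $\mathrm{Ext}_{\mathrm{cl}} \xrightarrow{\sim} \mathrm{Ext}^{\mathrm{Ch}=0}$. To extract the tridegree formula: a classical cobar chain of internal degree $s_c+f_c$ and length $f_c$, regarded motivically as the corresponding $\xi$-chain in $\mathbf{A}_*$, has motivic weight $s_c+f_c$ and motivic internal degree $2(s_c+f_c)$, because each motivic $\xi_j$ has internal degree $2(2^j-1)$ and weight $2^j-1$, doubled in internal degree relative to its classical counterpart. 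The motivic stem is therefore $2(s_c+f_c) - f_c = 2s_c+f_c$, yielding the stated formula $(s_c, f_c) \mapsto (2s_c+f_c,\, f_c,\, s_c+f_c)$.

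Third, the higher structure is inherited for free: algebra multiplication on $\mathrm{Ext}$ is induced by the shuffle product on the cobar complex, which restricts to the Chow-$0$ subcomplex; the Nakamura/May squaring operations admit cobar-level formulas that preserve Chow degree and reduce on Chow-$0$ chains to their classical analogs; and Massey products can be computed from representing cycles and null-homotopies chosen within the Chow-$0$ subcomplex whenever the bracketed products vanish there. The main obstacle is the grading bookkeeping in step two: the subcomplex identification is conceptually transparent once the Chow filtration on $\mathbf{A}_*$ is noted, but extracting the precise formula $(s_c, f_c) \mapsto (2s_c+f_c,\, f_c,\, s_c+f_c)$ requires careful tracking of the weight-doubling of the motivic $\xi_j$'s relative to their classical counterparts.
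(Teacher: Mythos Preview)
The paper does not supply a proof of this statement; it is quoted from \cite{I-stem}. Your cobar-complex strategy---isolating the Chow-degree-$0$ sub-DGA of the motivic cobar complex and identifying it with the classical cobar complex---is exactly the argument used in that reference, and your extraction of the tridegree formula $(s_c,f_c)\mapsto(2s_c+f_c,f_c,s_c+f_c)$ from the internal-degree doubling of the motivic $\xi_j$'s is correct.

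There is, however, a sign error that opens a genuine gap. You assign $\tau$ Chow degree $-2$ by reading off the bidegree $(0,1)$ in $\M_2$. But in the paper's $\Ext$ conventions, multiplication by $\tau$ lowers the weight by $1$ (compare $\tau g^2$ in degree $(40,8,23)$ with the non-surviving $g^2$ in degree $(40,8,24)$), so $\tau$ has Chow degree $+2$ on $\Ext$; the passage from the cohomological bigrading on $\M_2$ to the homotopical trigrading on $\Ext$ flips the sign of the weight. This matters for your argument: with $\mathrm{Ch}(\tau)=-2$ and $\mathrm{Ch}(\tau_i)=+1$, a cobar monomial such as $\tau[\tau_0\tau_1]$ or $\tau[\tau_0\,|\,\tau_1]$ has total Chow degree $-2+1+1=0$ without being built from $\xi_j$'s alone, so the sentence ``any $\tau_i$ factor would raise the Chow degree, and any $\tau$-coefficient would lower it'' does not force the Chow-$0$ subspace to consist only of $\xi$-monomials. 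Once the sign is corrected to $\mathrm{Ch}(\tau)=+2$, every generator $\tau,\tau_i,\xi_j$ has nonnegative Chow degree, Chow degree $0$ then forces the absence of both $\tau$-coefficients and $\tau_i$-factors, and your identification of the Chow-$0$ sub-DGA with the classical cobar complex goes through as written, including the compatibility with products, squaring operations, and Massey products.
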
 
In other words, $$\mathrm{Ext}|_{s+f-2w=0} \cong \mathrm{Ext}_{\mathrm{cl}}.$$

The inclusion $\mathbf{A}(2) \hookrightarrow \mathbf{A}$ induces a homomorphism $\phi :$ Ext $ \rightarrow $ Ext$_{\mathbf{A}(2)}$ which allows us to detect some structure in Ext via Ext$_{\mathbf{A}(2)}$.
We emphasize that Ext$_{\mathbf{A}(2)}$ is described completely in \cite{I-A2}. Table \ref{table-phi} gives some values of $\phi$ that we will need.
\begin{table}[h]
	\caption{Some values of the map $\phi: \mathrm{Ext} \longrightarrow \mathrm{Ext}_{\mathbf{A}(2)}$.}
	\label{table-phi}
	\begin{tabular}{c|c|c}
		
		Ext & Ext$_{\mathbf{A}(2)}$ & $(s,f,w)$\\
		\hline
		$i$ & $Pn$ & $(23,7,12)$  \\   
		
		$k$ & $dn$ & $(29,7,16)$ \\
		
		$r$ & $n^2$ & $(30,6,16)$  \\
	
		$m$  & $ng$ & $(35,7,20)$ \\
		
			$ \Delta h_1 d_0$ & $ \Delta h_1 \cdot d_0$ & $(39,9,21)$\\
		
		$\tau g^2$ & $\tau \cdot g^2$ & $(40,8,23)$\\
			
		$\tau \Delta h_1 g$ & $\tau \cdot \Delta h_1 \cdot g$ & $(45,9,24)$\\
		
			$h_2 g^j$ & $h_2 \cdot g^j$ & ($20j+3,4j+1,12j+2$) \\
		
				$P^id_0$  & $P^i\cdot d_0$  & ($8i+14,4i+4,4i+8$)\\
		
		$P^ie_0$ & $P^i \cdot e_0$ & ($8i+17,4i+4,4i+10$)
	
	\end{tabular}
\end{table}

\begin{remark}In some cases, $\phi(x)$ is decomposable in Ext$_{\mathbf{A}(2)}$ when $x$ is indecomposable in Ext. For example, the element $\Delta h_1 d_0$ in Ext is indecomposable but $\phi(\Delta h_1 d_0)=\Delta h_1 \cdot d_0$ is the product of $\Delta h_1$ and $d_0$ in Ext$_{\mathbf{A}(2)}$. 
	\end{remark}

If we invert $h_1$ on Ext, then Ext$[h_1^{-1}]$ becomes simpler. We can use Ext$[h_1^{-1}]$ to detect some structure in Ext. The following theorems describe Ext$[h_1^{-1}]$ and Ext$_{\mathbf{A}(2)}[h_1^{-1}]$.
\begin{thm}\cite{G-I-local}*{Theorem 1.1}\label{thm:h1-local} 
	The $h_1$-localization $\mathrm{Ext}[h_1^{-1}]$ is a polynomial algebra over $\mathbb{F}_2[h_1^{\pm 1}]$ on generators $v_1^4$ and $v_n$ for $n \ge 2$, where:
	\begin{itemize}
		\item	
		the element $v_1^4$ has degree $(8,4,4)$.
		\item
		the element $v_n$ has degree $(2^{n+1}-2,1,2^n-1)$.
	\end{itemize}
\end{thm}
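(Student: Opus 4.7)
My plan is to compute $\mathrm{Ext}[h_1^{-1}]$ via the motivic May spectral sequence, carefully tracking the motivic weight. The $E_1$-page is a polynomial algebra on generators $h_{ij}$ with known tridegrees, and inverting $h_1 = h_{10}$ collapses a great deal of the structure, because many $h_1$-torsion cycles become trivial and many May differentials become divisible by $h_1$. Classically the analogous computation collapses to nothing in positive stems because $h_1$ is nilpotent in $\mathrm{Ext}_{\mathrm{cl}}$, but motivically $h_1$ is non-nilpotent and the nontrivial generators $v_n$ can survive precisely because of the extra $\tau$-weight grading.

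First, I would identify each $v_n$ as a specific permanent cycle on the $h_1$-localized May page. The tridegree $(2^{n+1}-2, 1, 2^n-1)$ has Chow degree $1$ and Adams filtration $1$, matching a Milnor-primitive-type class detected by $h_{1n}$ up to correction by powers of $\tau$ and $h_1$ to realize the correct weight. Second, I would establish that $v_1$ itself is killed by $h_1$-localization and only $v_1^4$ survives: the motivic analog of the classical relation $h_1^3 h_2 = 0$ forces $h_2$ to be $h_1$-torsion, while iterating the Adams periodicity operator produces a nontrivial class of tridegree $(8,4,4)$ playing the role of $v_1^4$. Polynomial generation on the $v_n$'s and $v_1^4$ is then a matter of matching Hilbert series.

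The main obstacle will be ruling out hidden relations or spurious surviving generators in the localized May $E_\infty$-page. I would attack this in two complementary ways. On one hand, I would use the $\tau$-Bockstein spectral sequence from $\mathrm{Ext}_{\mathrm{cl}}[\tau]$ to $\mathrm{Ext}$: because $\mathrm{Ext}_{\mathrm{cl}}[h_1^{-1}]$ is already understood as an $\eta$-periodic polynomial algebra, lifting this to the motivic setting while tracking how the $\tau$-Bockstein differentials deform $v_1$ into $v_1^4$ should pin down the $E_\infty$-page exactly. On the other hand, I would use the comparison map $\phi : \mathrm{Ext} \to \mathrm{Ext}_{\mathbf{A}(2)}$ as a consistency check on small generators, since the $h_1$-localization of $\mathrm{Ext}_{\mathbf{A}(2)}$ is computable directly from the known description of $\mathrm{Ext}_{\mathbf{A}(2)}$. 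The crux is a vanishing-line argument on the $h_1$-localized May page showing that $E_\infty$ is concentrated on, and freely generated by, the classes $v_1^4$ and $v_n$ for $n \ge 2$; once this is in place, the polynomial structure over $\mathbb{F}_2[h_1^{\pm 1}]$ follows from the multiplicative structure of the localization.
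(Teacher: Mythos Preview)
The paper does not give its own proof of this statement: the theorem is quoted verbatim from Guillou--Isaksen \cite{G-I-local}*{Theorem 1.1} and used as a black box. So there is nothing in the present paper to compare your proposal against beyond the citation itself.

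That said, your overall strategy---run the motivic May spectral sequence and invert $h_1$---is indeed the method used in \cite{G-I-local}. Where your outline goes wrong is in the supporting heuristics. The claim that ``$h_1$ is nilpotent in $\mathrm{Ext}_{\mathrm{cl}}$'' is false: all powers $h_1^k$ are nonzero in classical $\mathrm{Ext}$ (it is $\eta$ in homotopy that is nilpotent, via an Adams differential on $h_1^4$). Consequently the contrast you draw between the classical and motivic $h_1$-localizations is not the right one, and the proposed $\tau$-Bockstein input ``$\mathrm{Ext}_{\mathrm{cl}}[h_1^{-1}]$ is already understood as an $\eta$-periodic polynomial algebra'' is not an available fact in the form you state it. Likewise ``the classical relation $h_1^3 h_2 = 0$'' is not the relevant relation (one has $h_1 h_2 = 0$ already), so the mechanism you sketch for why only $v_1^4$ and not $v_1$ survives is not correct as written. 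In \cite{G-I-local} the actual work is a careful analysis of the $h_1$-localized May differentials and of which $b_{ij}$ and $h_{ij}$ classes survive after inverting $h_{10}$, together with the observation that $\tau$ itself is killed in the localization; your plan would need to be rebuilt around those ingredients rather than the classical comparison you propose.
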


\begin{thm}\cite{G-I-local}The $h_1$-localization $\mathrm{Ext}_{\mathbf{A}(2)} [h_1^{-1}]$ is a polynomial algebra
	$$\mathrm{Ext}_{\mathbf{A}(2)} [h_1^{-1}] \cong \mathbb{F}_2[h_1^{\pm 1},a_1,v_1^4,v_2]$$
	in which $a_1$ has degree $(11,3,7)$; $v_1^4$ has degree $(8,4,4)$; and $v_2$ has degree $(6,1,3)$.	
	\end{thm}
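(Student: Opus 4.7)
The plan is to bootstrap from the full, explicit computation of $\mathrm{Ext}_{\mathbf{A}(2)}$ given in \cite{I-A2}. Because $\mathbf{A}(2)$ is a finite sub-Hopf-algebra of $\mathbf{A}$, the cohomology $\mathrm{Ext}_{\mathbf{A}(2)}$ is a finitely presented trigraded $\M_2$-algebra with a known multiplication table and a known $\tau$-torsion pattern. The first step is to isolate the $h_1$-nilpotent ideal, i.e., the submodule of classes $x$ with $h_1^N x = 0$ for some $N$; the quotient by this ideal embeds into $\mathrm{Ext}_{\mathbf{A}(2)}[h_1^{-1}]$, so the heart of the proof is a careful inventory of the $h_1$-periodic classes.

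The second step is to show that $v_1^4$ in degree $(8,4,4)$, $a_1$ in degree $(11,3,7)$, and $v_2$ in degree $(6,1,3)$ are each $h_1$-periodic indecomposables, and that any other $h_1$-periodic class becomes, after multiplying by a suitable power of $h_1^{\pm 1}$, equal to a monomial in these three. Many classical-looking indecomposables of $\mathrm{Ext}_{\mathbf{A}(2)}$ (for instance $d_0$, $e_0$, $g$, $c_0$) become Laurent-polynomial expressions in $h_1, a_1, v_1^4, v_2$ once $h_1$ is inverted, via the hidden $h_1$-extensions recorded in \cite{I-A2}; the bookkeeping task is to make this explicit for every $h_1$-periodic generator.

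The third step is to rule out polynomial relations. Two complementary tools apply. First, comparison with the classical case: invert $\tau$ and use the $\tau$-inversion isomorphism to recover the classical $h_1$-local Ext of $\mathbf{A}(2)_{\mathrm{cl}}$, whose polynomial structure is folklore; any hypothetical nontrivial motivic relation would either restrict to a nonexistent classical relation, or vanish after $\tau$-inversion and therefore be detected by Chow degree via Theorem \ref{Chow}. Second, a Hilbert-series match: the candidate polynomial algebra $\F_2[h_1^{\pm 1}, a_1, v_1^4, v_2]$ has a completely explicit Poincar\'e series in $(s,f,w)$, and I would compare it against the series of $h_1$-periodic classes enumerated from \cite{I-A2}. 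Matching Hilbert series together with the surjectivity from step two forces injectivity.

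The main obstacle, in my view, is the bookkeeping in step two: tracking all hidden $h_1$-extensions in $\mathrm{Ext}_{\mathbf{A}(2)}$ and realizing every $h_1$-periodic indecomposable as a monomial in $a_1, v_1^4, v_2$ up to a power of $h_1$, while also being careful about how $\tau$ itself acts in the localization. An attractive structural alternative that would sidestep much of this is to build an explicit motivic Koszul-style resolution of $\M_2$ over $\mathbf{A}(2)$, invert $h_1$ at the chain level, and read off the cohomology as a polynomial ring directly; this would collapse steps two and three into a single computation, at the cost of setting up the resolution carefully.
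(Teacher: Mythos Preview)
The paper does not prove this theorem; it is quoted without proof from \cite{G-I-local}, so there is no in-paper argument to compare your proposal against.

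That said, your sketch has a genuine gap in step three. The classical comparison you invoke is vacuous: in $\mathrm{Ext}_{\mathbf{A}(2)_{\mathrm{cl}}}$ the element $h_1$ is nilpotent (indeed $h_1^4=0$ already in the classical setting), so the classical $h_1$-localization is the zero ring, not a polynomial algebra. Inverting $\tau$ on $\mathrm{Ext}_{\mathbf{A}(2)}[h_1^{-1}]$ therefore kills everything and cannot detect relations. Relatedly, your remark about being ``careful about how $\tau$ itself acts'' understates the situation: since $\tau h_1^4=0$ in $\mathrm{Ext}_{\mathbf{A}(2)}$, inverting $h_1$ forces $\tau=0$, so the localization is an $\mathbb{F}_2$-algebra, not an $\mathbb{M}_2$-algebra, and Theorem~\ref{Chow} (which concerns $\mathrm{Ext}_{\mathbf{A}}$, not $\mathrm{Ext}_{\mathbf{A}(2)}$, and whose Chow-degree-zero part is precisely where $\tau$-torsion lives) is not the right tool here. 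Your Hilbert-series argument combined with the bookkeeping of step two could in principle be made to work on its own, but the actual proof in \cite{G-I-local} proceeds instead by analyzing the $h_1$-localized motivic May spectral sequence for $\mathbf{A}(2)$, which collapses quickly and yields the polynomial description directly without an exhaustive inventory of hidden extensions.
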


We can use $h_1$-localization to prove the non-existence of certain elements $x$ in Ext. We use the May spectral sequence analysis of Ext$[h_1^{-1}]$ to determine the localization map
$$L: \mathrm{Ext} \longrightarrow \mathrm{Ext}[h_1^{-1}]$$
in a range \cite{G-I-local}. Some values of $L$ are given in Table \ref{L-map}.
\begin{table}[h] 
	\caption{Some values of the localization map $L: \mathrm{Ext} \rightarrow \mathrm{Ext}[h_1^{-1}]$ \cite{G-I-local} \label{L-map}}\begin{tabular}{c|c}
		
		$x$ & $L(x)$ \\
		\hline
		$P^k h_1$  & $h_1v_1^{4k}$  \\
		
		$P^k d_0$ & $h_1^2 v_1^{4k} v_2^2$   \\
		
		$P^k e_0$ & $h_1^3 v_1^{4k} v_3$   \\   
		
		$e_0 g$ & $h_1^7 v_4$  \\
	
\end{tabular} \end{table}

There is also a localization map $L:$ Ext$_{\mathbf{A}(2)} \longrightarrow$ Ext$_{\mathbf{A}(2)}[h_1^{-1}]$.
The following diagram is commutative.
\[
\begin{tikzcd}
\mathrm{Ext}  \ar{r}{\phi} \arrow{d}{L} & 
\mathrm{Ext}_{\mathbf{A}(2)} \arrow{d}{L}  \\
\mathrm{Ext}[h_1^{-1}] \ar{r}{\phi} & \mathrm{Ext}_{\mathbf{A}(2)}[h_1^{-1}] 
\end{tikzcd}
\] 

\begin{defn}Let $t$ be a non-negative integer. We define $\alpha (t)$ to be the number of 1's in the binary expansion of $t$.
\end{defn}
\begin{lemma}\label{lem:binary}
	Let $t, k$ and $s$ be non-negative integers, $s\ge 1$. We have
	\begin{itemize}
		\item
		$\alpha (t) \le t$.
		\item
		$\alpha (t+k) \le \alpha (t) + \alpha(k)$.
		\item $\alpha (2^st) = \alpha (t).$
	\end{itemize}
\end{lemma}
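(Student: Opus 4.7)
The plan is to dispatch each of the three claims by a direct analysis of the binary expansion $t=\sum_{i\ge 0} d_i(t)\,2^i$, where each $d_i(t)\in\{0,1\}$, so that $\alpha(t)=\sum_i d_i(t)$. None of the three assertions requires any input beyond this bookkeeping, so the proof will be short; the main task is simply to organize the three arguments cleanly.

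For the first bullet, I would observe that $d_i(t)\le d_i(t)\,2^i$ for every $i$, with equality precisely when $i=0$ or $d_i(t)=0$. Summing over $i$ gives $\alpha(t)=\sum_i d_i(t)\le \sum_i d_i(t)\,2^i = t$. The case $t=0$ is immediate since $\alpha(0)=0$.

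For the third bullet, I would note that the binary expansion of $2^s t$ is obtained from that of $t$ by shifting the digits up by $s$ positions: $d_i(2^s t)=d_{i-s}(t)$ for $i\ge s$, and $d_i(2^s t)=0$ for $i<s$. Since a positional shift does not alter the number of nonzero digits, $\alpha(2^s t)=\alpha(t)$.

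The subadditivity bullet $\alpha(t+k)\le \alpha(t)+\alpha(k)$ is the one small piece of content, and I would prove it by induction on the number of carries performed in the schoolbook binary addition of $t$ and $k$. With no carries, digitwise addition gives $d_i(t+k)=d_i(t)+d_i(k)$ and hence equality $\alpha(t+k)=\alpha(t)+\alpha(k)$. Each carry step replaces two $1$'s in some column $i$ by a single $1$ in column $i+1$ (possibly combining with an existing $1$ there to produce a further carry), which weakly decreases the total number of $1$'s by at least one per carry. Iterating until no carries remain yields the claimed inequality. The only mild obstacle is phrasing the induction so that cascading carries are handled correctly; a clean way is to formalize it as an induction on $\min(t,k)$, writing $t+k=(t\oplus k)+2(t\wedge k)$ where $\oplus$ and $\wedge$ are bitwise XOR and AND, and observing that $\alpha(t)+\alpha(k)=\alpha(t\oplus k)+2\alpha(t\wedge k)$ while $\alpha(t+k)\le \alpha(t\oplus k)+\alpha(t\wedge k)$ by the induction hypothesis applied to the strictly smaller summands $t\oplus k$ and $2(t\wedge k)$, combined with the third bullet to rewrite $\alpha(2(t\wedge k))=\alpha(t\wedge k)$.
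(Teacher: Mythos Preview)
Your arguments for the first and third bullets are correct and are essentially the paper's: the paper writes $t=\sum_{i=1}^n 2^{m_i}$ with the $m_i$ distinct and observes $t\ge n=\alpha(t)$, and for the third bullet notes that $2^s t=\sum 2^{m_i+s}$.

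For the second bullet your approach differs from the paper's. The paper simply concatenates the two binary expansions into a sum of $\alpha(t)+\alpha(k)$ (not necessarily distinct) powers of $2$ and observes that collapsing any repeated pair $2^c+2^c=2^{c+1}$ can only shorten the list, so the canonical expansion of $t+k$ has at most $\alpha(t)+\alpha(k)$ terms. Your XOR/AND recursion $t+k=(t\oplus k)+2(t\wedge k)$ together with the identity $\alpha(t)+\alpha(k)=\alpha(t\oplus k)+2\alpha(t\wedge k)$ is a perfectly good alternative and arguably cleaner once the induction is set up; it also makes the equality case ($t\wedge k=0$) transparent.

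There is one slip, however: induction on $\min(t,k)$ does not work, because the transformed pair need not have smaller minimum. For instance $t=3$, $k=1$ gives $t\oplus k=2$ and $2(t\wedge k)=2$, so $\min$ jumps from $1$ to $2$. The fix is immediate: induct instead on $\alpha(t)+\alpha(k)$ (equivalently on $\alpha(t\wedge k)$). Your own identity shows
\[
\alpha(t\oplus k)+\alpha\bigl(2(t\wedge k)\bigr)=\alpha(t\oplus k)+\alpha(t\wedge k)=\alpha(t)+\alpha(k)-\alpha(t\wedge k),
\]
which strictly decreases whenever $t\wedge k\ne 0$; when $t\wedge k=0$ you are in the no-carry base case and have equality $\alpha(t+k)=\alpha(t)+\alpha(k)$. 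With that adjustment the argument goes through.
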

\begin{proof}Suppose that $t=\sum_{i=1}^{n} 2^{m_i}$ in which $m_i \ge 0$ and $m_i \neq m_j$ if $i\neq j$. Consequently, $\alpha (t)=n$. Since $t=\sum_{i=1}^{n} 2^{m_i}\ge 1\cdot n=n$, we obtain the first inequality.
	
	With the above $t$ we suppose further that $k=\sum_{i=1}^{p} 2^{q_i}$ in which $q_i \ge 0$ and $q_i \neq q_j$ if $i\neq j$. Consequently, $\alpha (k)=p$. We have
	
	$$t+k = \sum_{i=1}^{n} 2^{m_i} + \sum_{j=1}^{p} 2^{q_j} $$
	where the right hand side has $n+p$ powers of 2. If there is no pair $(m_i,q_j)$ such that $m_i=q_j$, then $\alpha (t+k)=n+p = \alpha (t) + \alpha(k)$. If there exists at least one pair $(m_i,q_j)$ such that $m_i = q_j = c$, since $2^{m_i} + 2^{q_j} = 2^{c+1}$ we have $\alpha (t+k) < n+p = \alpha (t) + \alpha(k)$. Therefore, $\alpha(t+k)\le \alpha(t)+\alpha(k)$.
	
	The last identity can be proven by the observation that if $t = \sum_{i=1}^{n} 2^{m_i}$, then $2^st = \sum_{i=1}^{n} 2^{m_i+s}$. 
\end{proof}

\begin{lemma}\label{local2}The image of 
\begin{equation*}	\phi: \mathrm{Ext}[h_1^{-1}] \longrightarrow  \mathrm{Ext}_{\mathbf{A}(2)} [h_1^{-1}] \end{equation*}
 is spanned by the monomials $h_1^d v_1^{4a} v_2^b a_1^c$ where $a, b$ and $c$ are non-negative integers for which $\alpha(b+c) \le b$ and $d$ is an integer.
\end{lemma}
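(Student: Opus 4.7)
The plan is to combine the polynomial descriptions of $\mathrm{Ext}[h_1^{-1}]$ (Theorem \ref{thm:h1-local}) and of $\mathrm{Ext}_{\mathbf{A}(2)}[h_1^{-1}]$ to compute $\phi$ on generators, then to characterize products of those generators combinatorially via $\alpha$.

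First I would determine $\phi(v_n)$ for $n \geq 2$. One has $\phi(h_1) = h_1$, $\phi(v_1^4) = v_1^4$, and $\phi(v_2) = v_2$ by the naming of generators. For $n \geq 3$, $\phi(v_n)$ lies in tridegree $(2^{n+1}-2,\, 1,\, 2^n-1)$ of $\mathbb{F}_2[h_1^{\pm 1}, a_1, v_1^4, v_2]$. Writing out the three linear degree equations on the exponents of a candidate monomial $h_1^d v_1^{4a} v_2^b a_1^c$ and imposing $a, b, c \geq 0$, the system forces the unique solution
\[
\phi(v_n) = h_1^{3 - 3 \cdot 2^{n-2}} \, v_2 \, a_1^{2^{n-2}-1}.
\]
Non-triviality of $\phi(v_n)$ in each case comes from the explicit May spectral sequence calculations of \cite{G-I-local}. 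This input is the step I expect to be the main obstacle, because it is the only ingredient not forced by formal degree considerations.

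Since $\phi$ is a ring homomorphism, its image is the $\mathbb{F}_2$-subalgebra of $\mathrm{Ext}_{\mathbf{A}(2)}[h_1^{-1}]$ generated by $h_1^{\pm 1}$, $v_1^4$, and the monomials $\phi(v_n)$ for $n \geq 2$. Any product of these generators is a single monomial $h_1^d v_1^{4a} v_2^b a_1^c$ in which $b$ counts the number of $\phi(v_{n_i})$-factors (with $n_i \geq 2$) and $b + c = \sum_i 2^{n_i-2}$ expresses $b + c$ as a sum of exactly $b$ (possibly repeated) non-negative powers of two. Starting from the binary expansion of $N$ and repeatedly splitting $2^k = 2^{k-1} + 2^{k-1}$ for $k \geq 1$ shows that $N$ is such a sum of $b$ powers of two precisely when $\alpha(N) \leq b \leq N$; the upper bound $b \leq b + c$ is automatic from $c \geq 0$, so the only nontrivial constraint is $\alpha(b+c) \leq b$. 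Because distinct monomials in the polynomial ring $\mathbb{F}_2[h_1^{\pm 1}, a_1, v_1^4, v_2]$ are $\mathbb{F}_2$-linearly independent, the image of $\phi$ is exactly the $\mathbb{F}_2$-span of the monomials $h_1^d v_1^{4a} v_2^b a_1^c$ with $\alpha(b+c) \leq b$, as claimed.
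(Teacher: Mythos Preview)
Your argument is correct and matches the paper's proof essentially step for step: both compute $\phi(v_n)=h_1^{-3(2^{n-2}-1)}v_2\,a_1^{2^{n-2}-1}$, identify the image with monomials $h_1^d v_1^{4a} v_2^b a_1^c$ where $c=\sum_{i=1}^{b}(2^{n_i-2}-1)$, and then use the splitting $2^k=2^{k-1}+2^{k-1}$ to show this condition is equivalent to $\alpha(b+c)\le b$. The only cosmetic difference is that you pin down $\phi(v_n)$ by a tridegree count (plus nonvanishing from \cite{G-I-local}), whereas the paper simply cites the formula; as the paper's remark after the lemma notes, the nonvanishing for all $n$ ultimately rests on the existence of motivic modular forms \cite{GKIR}, so your flagged ``main obstacle'' is exactly the external input the paper uses too.
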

\begin{proof}Denote by $\mathcal{G}$ the $\mathbb{M}_2$-submodule of Ext$_{\mathbf{A}(2)} [h_1^{-1}]$ spanned by the monomials $h_1^d v_1^{4a} v_2^b a_1^c$ where $a, b$ and $c$ are non-negative integers for which $\alpha(b+c) \le b$ and $d$ is an integer.
	
	The map $\phi$ takes $v_1^4$ and $v_n$ to $v_1^4$ and $h_1^{-3(2^{n-2}-1)}a_1^{2^{n-2}-1}v_2$ respectively for all $n$ \cite{G-I-local}. 
	Consequently
	$$\phi: v_1^{4a}\prod _{j\in J} v_j \longmapsto h_1^{-3\sum_{j\in J}(2^{j-2}-1)} v_1^{4a}v_2^m a_1^{\sum_{j\in J}(2^{j-2}-1)}$$
	 in which $J$ is a sequence $(j_1,\ldots,j_m)$ of length $m$ such that $j_k \ge 2$ (repeats are allowed).
	
	Consequently, the image of $\phi$ equals the $\mathbb{M}_2$-submodule $\mathcal{H}$ of Ext$_{\mathbf{A}(2)} [h_1^{-1}]$ spanned by the monomials of the form $h_1^d v_1^{4a}v_2^m a_1^{\sum_{j\in J}(2^{j-2}-1)}$ in which $J$ is a sequence $(j_1,\ldots,j_m)$ of length $m$ such that $j_k \ge 2$ (repeats are allowed).
	
	Since $J$ has length $m$, $$\alpha(m+\sum_{j\in J}(2^{j-2}-1)) = \alpha (\sum_{j\in J}2^{j-2}) \le m.$$ As a result, $\mathcal{H}$ is contained in $\mathcal{G}$.
	
	Conversely, for any monomial $h_1^d v_1^{4a} v_2^b a_1^c$ for which $\alpha(b+c) \le b$, we can suppose that $b+c = \sum_{j\in J}2^{j} $ where $J$ is a sequence $(j_1,\ldots,j_r)$ of length $r\le b$ such that $j_k \ge 0$ for $k$ in $\{1,\ldots,r\}$. By replacing $2^j$ by $2^{j-1}+2^{j-1}$ as necessary, we can rewrite $b+c$ as
	
	$$ b+c = \sum_{i\in I}2^{i}$$  
	where $I$ is a sequence $(i_1,\ldots,i_b)$ of length $ b$ such that $i_k \ge 0$ for $k$ in $\{1,\ldots,b\}$. Then 
	$$c = \sum_{i\in I}2^{i} - b = \sum_{i\in I}(2^{i}-1). $$
	This shows that $\mathcal{G}$ is contained in $\mathcal{H}$.
\end{proof}

\begin{remark}The value of $\phi(v_n)$ is given in \cite{G-I-local} for $n \leq 6$.  In \cite{G-I-local}, it is also explained that the value of $\phi(v_n)$ for all $n$ can be deduced from the existence of a ``motivic modular forms" spectrum. Such a spectrum has recently been constructed \cite{GKIR}, so that the values of $\phi(v_n)$ are now known for all $n$. 
	\end{remark}

\section{The wedge family}
We recall that the inclusion $\mathbf{A}(2)\hookrightarrow \mathbf{A}$ induces a homomorphism of algebras $\phi: \mathrm{Ext} \rightarrow \mathrm{Ext}_{\mathbf{A}(2)}$.
\begin{defn}\label{def:Pg}For any $\lambda$ in  Ext, $i \ge 0$ and $j\ge 0$, $\mathbf{P}^i\mathbf{g}^j \lambda$ is the set which consists of all elements $x$ of Ext such that $\phi (x) = P^ig^j \phi(\lambda)$ in Ext$_{\mathbf{A}(2)}$.
\end{defn}

\begin{ex}
The set $\mathbf{g}^2r$ contains $m^2$ because $\phi (m^2) = g^2 n^2 = g^2 \phi(r).$
\end{ex}

\begin{remark}We differentiate $\mathbf{P}$ and $\mathbf{g}$ with $P$ and $g$. By the bold $\mathbf{P}$ and $\mathbf{g}$ we mean set-valued operations from Ext to Ext. Remember that $\mathbf{P}$ and $ \mathbf{g}$ do not exist in Ext as elements. By $P$ and $g$, we mean elements in Ext$_{\mathbf{A}(2)}$. 
\end{remark}


	\begin{remark}
	More specifically, the sets $\mathbf{P}^i\mathbf{g}^j\lambda$ can be seen as cosets with respect to the subgroup $$I=\{z:\phi(z)=0\}$$ of Ext. 
	
\end{remark}
\begin{remark}We sometimes write the symbols $\mathbf{P}$ and $\mathbf{g}$ in a different order for consistency with standard notation. For example:
	\begin{itemize}
\item	By $e_0\mathbf{g}^2$ we mean $\mathbf{g}^2(e_0)$. The set $e_0\mathbf{g}^2$ is empty (See Corollary \ref{cor:empty}). 
\item 	By $\tau \Delta h_1 \mathbf{g}^{j+1}$ we mean the set $\mathbf{g}^{j} (\tau \Delta h_1 g)$. 
\item By $\tau \mathbf{P}^i\mathbf{g}^{j+1}$ we mean $\mathbf{P}^i\mathbf{g}^{j+1}(\tau)$.
\item The same convention is applied for $\tau \mathbf{g}^k$, $\tau e_0^t \mathbf{g}^k$ and many others.
	\end{itemize}
\end{remark}

\begin{remark}From Definition \ref{def:Pg} we have $\mathbf{P}^i\mathbf{g}^jx \cdot \mathbf{P}^a\mathbf{g}^b y \subseteq \mathbf{P}^{i+a}\mathbf{g}^{j+b}xy.$ However, the inverse inclusion is not correct generally. For example, by low dimension calculation \cite{I-stem} we have$$e_0 \cdot \tau^2 \mathbf{g} =\{e_0\}\{\tau^2 g\} \varsubsetneq \tau^2 e_0 \mathbf{g} = \{\tau^2 e_0g, \tau^2e_0g+h_0^3x\}.$$
	\end{remark}
\begin{defn}
We define $\boldsymbol\Lambda $ to be the following sixteen elements of Ext.
\begin{table}[h]
	\caption{Sixteen elements of the set $\boldsymbol\Lambda \label{lambda}$}
	\begin{tabular}{c|cc|c}
		
		element & $(s,f,w)$ & element & $(s,f,w)$ \\
		\hline 
		$\tau g^2$  & (40,8,23)  & $d_0r$ & (44,10,24)  \\
		
		$\tau \Delta h_1 g$ & (45,9,24) & $d_0m$ & (49,11,28)  \\
		
		$gr$ & (50,10,28)  & $\tau e_0^2g$ & (54,12,31)  \\
		
		$gm$ & (55,11,32) &  $\tau \Delta h_1 e_0^2$ & (59,13,32)	   \\

		$\tau \Delta h_1e_0$ & (42,9,22)  & $d_0l$ & (46,11,26)  \\
		$e_0 r$ & (47,10,26) &  $\tau e_0^3$ & (51,12,29)  \\
		$e_0m$ & (52,11,30) & 	$\tau \Delta h_1 d_0 e_0$ & (56,13,30)   \\
		$\tau e_0 g^2$ & (57,12,33) &  	$d_0 e_0 r$ & (61,14,34)  \\
		
\end{tabular} \end{table}
\end{defn}
The following theorem is our main result.
\begin{thm}\label{def-wedge}For any $\lambda$ in $\boldsymbol\Lambda$, $i \ge 0$ and $j\ge 0$, the set
	$\mathbf{P}^i \mathbf{g}^j\lambda$ is non-empty and consists of non-zero elements.
\end{thm}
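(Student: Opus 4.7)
The statement splits naturally into two parts. First, if $\mathbf{P}^i\mathbf{g}^j\lambda$ is non-empty, then every $x$ in it has $\phi(x) = P^i g^j\phi(\lambda)$, so $x$ is automatically non-zero whenever $P^i g^j\phi(\lambda) \neq 0$ in Ext$_{\mathbf{A}(2)}$. I would dispose of this non-vanishing condition using the complete description of Ext$_{\mathbf{A}(2)}$ from \cite{I-A2}: for each $\lambda \in \boldsymbol\Lambda$ one identifies $\phi(\lambda)$ (extending Table \ref{table-phi} by obvious products such as $\phi(d_0 r) = d_0 n^2$, $\phi(gm) = g^2 n$, $\phi(e_0 r) = e_0 n^2$, $\phi(d_0 m) = d_0 n g$, and similar), and verifies that it lies in a polynomial subalgebra on which $P$ and $g$ act injectively in every degree.

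The real content is non-emptiness. For each triple $(i,j,\lambda)$ I plan to exhibit an explicit motivic witness $x_{i,j,\lambda} \in \mathrm{Ext}$ whose image under $\phi$ is exactly $P^i g^j \phi(\lambda)$. The raw materials are:
\begin{itemize}
\item the motivic $\mathbf{P}^i$-families represented by $P^i h_1$, $P^i d_0$, $P^i e_0$ from Table \ref{table-phi};
\item the element $\tau g \in \mathrm{Ext}$, which realizes multiplication by $g$ at the cost of one $\tau$-weight;
\item the sixteen seeds $\lambda \in \boldsymbol\Lambda$, together with other honest motivic classes like $\tau \Delta h_1 g$, $e_0 r$, $e_0 m$, and $m^2 \in \mathbf{g}^2 r$, which secretly carry hidden $g$-factors under $\phi$.
\end{itemize}
Using the multiplicativity $\mathbf{P}^a \mu \cdot \mathbf{g}^b \nu \subseteq \mathbf{P}^a \mathbf{g}^b(\mu\nu)$, I would assemble $x_{i,j,\lambda}$ as a product of such building blocks and check that its trigraded degree, in particular the motivic weight, matches $P^i g^j \phi(\lambda)$ exactly.

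The main obstacle is a careful case analysis in which the sixteen elements of $\boldsymbol\Lambda$ split into families requiring distinct witness schemes. The crucial combinatorial point is that every $\lambda \in \boldsymbol\Lambda$ carries exactly the right number of $\tau$-factors, either explicitly (as in $\tau g^2$, $\tau \Delta h_1 g$, $\tau e_0^2 g$, $\tau \Delta h_1 e_0$) or hidden inside products like $e_0 r$, $e_0 m$, $d_0 r$ that encode $g$-multiples in Ext$_{\mathbf{A}(2)}$, to absorb the extra $\tau$'s introduced by powers of $\tau g$ as $j$ grows. The one place where I expect the plan might need extra care is a boundary case where the natural product has an excess $\tau$; I would resolve such a case either by replacing a $\tau g$-factor by an indecomposable motivic class with the same image, or by invoking the $h_1$-localization map of Theorem \ref{thm:h1-local} to certify the existence of a $\tau$-free correction.
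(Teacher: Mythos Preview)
Your non-vanishing argument is fine: once a witness $x$ exists, $\phi(x)=P^ig^j\phi(\lambda)\neq 0$ in $\mathrm{Ext}_{\mathbf{A}(2)}$ forces $x\neq 0$, and this is exactly how the paper handles that half.

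The non-emptiness scheme, however, does not work as stated. The map $\phi$ preserves the tridegree, so an element of $\mathbf{g}^j\lambda$ must have weight exactly $w(\lambda)+12j$. Multiplying by $\tau g$ raises the weight by $11$, not $12$; hence $\lambda\cdot(\tau g)^j$ has weight $w(\lambda)+11j$ and lies in $\tau^j\mathbf{g}^j\lambda$, not in $\mathbf{g}^j\lambda$. Your claim that ``every $\lambda\in\boldsymbol\Lambda$ carries exactly the right number of $\tau$-factors \ldots\ to absorb the extra $\tau$'s introduced by powers of $\tau g$ as $j$ grows'' cannot hold: each $\lambda$ has at most one $\tau$, the ``hidden'' $g$-multiples in $m,r,l$ contribute only a bounded supply, and $j$ is unbounded. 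So for large $j$ your product witnesses sit in the wrong trigraded group and are simply not candidates for $\mathbf{P}^i\mathbf{g}^j\lambda$. The $h_1$-localization of Theorem~\ref{thm:h1-local} will not rescue this: in the paper that tool detects \emph{emptiness} (Propositions~\ref{prop:eg-empty} and \ref{dheg}), not existence of $\tau$-free lifts.

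What the paper actually does is avoid $\tau$-accumulation entirely. Using the Chow-degree-zero isomorphism (Theorem~\ref{Chow}) and classical Adams periodicity, it produces the Massey product identity $h_2g^j=\langle h_2,h_1,h_1^4\tilde{x}_j\rangle$ (Lemma~\ref{lemma:h2g}), and from this manufactures genuine elements of $\tau\mathbf{g}^j$, $\mathbf{g}^jm$, $\mathbf{g}^jr$, $\mathbf{g}^jl$ via shuffling (Lemmas~\ref{Lemma:gm}, \ref{Lemma:tg}). The $\mathbf{P}$-direction is then handled by the relation $d_0^2=Pg+h_1^3\Delta h_1$ in $\mathrm{Ext}_{\mathbf{A}(2)}$ (Lemma~\ref{Lemma:Pg}) together with Adams periodicity for $\mathbf{P}^id_0$, $\mathbf{P}^ie_0$, $\mathbf{P}^i\Delta h_1 e_0$. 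The missing idea in your proposal is precisely this Massey-product mechanism for realizing $g^j$ at the correct motivic weight.
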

Combining all elements of $\mathbf{P}^i\mathbf{g}^j\lambda$ with $i\ge 0$, $j\ge 0$ and $\lambda$ in $\boldsymbol\Lambda$, we obtain an infinite wedge-shaped diagram, filling out the angle with vertex at $\tau g^2$ in degree  $(40,8,23)$, bounded above by the line $f=\frac{1}{2}s-12$ parallel to the Adams edge \cite{Adams-mot}, and bounded below by the line $s=5f$ in Ext (Figure \ref{figure}). 

We need a couple of preliminary results before proving Theorem \ref{def-wedge}.
\begin{lemma}\label{Lemma:P-ope}
The sets $\mathbf{P}^i d_0$, $\mathbf{P}^i e_0$ and $\mathbf{P}^i \Delta h_1 e_0$ are non-empty for $i\ge 0$ and $j\ge 0$.	\end{lemma}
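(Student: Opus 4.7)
The plan is to exhibit explicit motivic witnesses in each of the three sets. For $\mathbf{P}^i d_0$ and $\mathbf{P}^i e_0$, this is immediate from Table \ref{table-phi}: the elements $P^i d_0$ and $P^i e_0$ in $\mathrm{Ext}$, named according to the conventions of \cite{more-stem}, satisfy $\phi(P^i d_0) = P^i \cdot d_0$ and $\phi(P^i e_0) = P^i \cdot e_0$ in $\mathrm{Ext}_{\mathbf{A}(2)}$ for every $i \ge 0$. These elements therefore witness the non-emptiness of $\mathbf{P}^i d_0$ and $\mathbf{P}^i e_0$.

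For $\mathbf{P}^i \Delta h_1 e_0$ no witness can be read off directly from Table \ref{table-phi}, so I would construct witnesses inductively in $i$, using the fact that multiplication by $P$ in both $\mathrm{Ext}$ and $\mathrm{Ext}_{\mathbf{A}(2)}$ is encoded by the Adams periodicity Massey product $\langle h_3, h_0^4, -\rangle$. The base case $i=0$ asks for an element of $\mathrm{Ext}$ whose $\phi$-image is $\Delta h_1 \cdot e_0$; one can produce such an element either from the low-dimensional computations of \cite{more-stem}, or by combining the known lift $\Delta h_1 d_0 \in \mathrm{Ext}$ from Table \ref{table-phi} with a relation such as $h_1 e_0 = h_2 d_0$ that relates $\Delta h_1 \cdot e_0$ to $\Delta h_2 \cdot d_0$ up to elements of $\ker \phi$. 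Given a witness $x_{i-1}$ for $\mathbf{P}^{i-1}\Delta h_1 e_0$, I would take $x_i$ to be a representative of the motivic Massey product $\langle h_3, h_0^4, x_{i-1}\rangle \subseteq \mathrm{Ext}$; since $\phi$ is a ring homomorphism preserving Massey products, $\phi(x_i)$ lies in $\langle h_3, h_0^4, \phi(x_{i-1})\rangle$, which equals $P^i \cdot \Delta h_1 \cdot e_0$ modulo indeterminacy, and any discrepancy can be absorbed by translating $x_i$ by a suitable element of $\ker \phi$.

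The main technical obstacle is verifying, at each stage, that the Massey product $\langle h_3, h_0^4, x_{i-1}\rangle$ is both defined and non-empty; that is, one needs the vanishings $h_3 h_0^4 = 0$ and $h_0^4 x_{i-1} = 0$ in $\mathrm{Ext}$. The first is a classical relation that lifts to the motivic setting. The second reduces, via $\phi$, to the analogous vanishing in $\mathrm{Ext}_{\mathbf{A}(2)}$, which follows from the explicit description of $\mathrm{Ext}_{\mathbf{A}(2)}$ in \cite{I-A2}; lifting this relation back to $\mathrm{Ext}$ requires a brief check that no obstruction sits in $\ker \phi$ at the relevant tridegree, which is handled by a dimension count combined with Theorem \ref{Chow} (possibly at the cost of multiplying the witness by a factor of $\tau$). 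Once these checks are in place the induction closes and the lemma follows.
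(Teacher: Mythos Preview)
Your treatment of $\mathbf{P}^i d_0$ and $\mathbf{P}^i e_0$ is fine and matches the paper in spirit.  For $\mathbf{P}^i\Delta h_1 e_0$, however, you are working much harder than necessary, and the workaround you propose has a real gap.

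The paper's argument is uniform for all three elements.  It observes that $d_0$, $e_0$, and $\Delta h_1 e_0$ are each (indecomposable) elements of $\mathrm{Ext}$ lying in the range where the Adams periodicity operator $P^i$ of \cite{Adams-ped} is an isomorphism; hence $P^i d_0$, $P^i e_0$, and $P^i\Delta h_1 e_0$ are all honest elements of $\mathrm{Ext}$ and serve as witnesses.  In particular there is no separate induction needed for the third case: $\Delta h_1 e_0$ is already named as a generator of $\mathrm{Ext}$ (see Table~\ref{opt} and the conventions of \cite{more-stem}), and Adams periodicity handles all $i>0$ at once.

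Your inductive Massey-product construction is essentially an attempt to rebuild Adams periodicity by hand, and the delicate step---showing $h_0^4 x_{i-1}=0$ in $\mathrm{Ext}$---does not follow from the argument you give.  Knowing $\phi(h_0^4 x_{i-1})=0$ in $\mathrm{Ext}_{\mathbf{A}(2)}$ does \emph{not} force $h_0^4 x_{i-1}=0$ in $\mathrm{Ext}$, since $\phi$ is far from injective; and your proposed fix of ``multiplying the witness by a factor of $\tau$'' changes the tridegree, so the resulting element would no longer map to $P^i\cdot\Delta h_1\cdot e_0$ and would not lie in $\mathbf{P}^i\Delta h_1 e_0$.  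The correct input here is precisely the vanishing guaranteed by the Adams periodicity theorem, which is what the paper invokes directly.
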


\begin{proof}Since $d_0$, $e_0$ and $\Delta h_1 e_0$ are generators in Ext, the statement is trivial when $i=0$. 
	
	We now consider the case $i>0$. The Adams periodicity operator $P^i$ is an isomorphism on Ext in specified ranges \cite{Adams-ped}. Since the element $d_0$ lies in these ranges, then $\mathbf{P}^id_0$ contains the element $P^id_0$. Therefore, the set $\mathbf{P}^id_0$ is non-empty.
The same argument is applied for $\mathbf{P}^ie_0$ and $\mathbf{P}^i\Delta h_1 e_0$. 
\end{proof}

\begin{lemma}\label{Lemma:Pg}Let $x$ be an element in $\mathrm{Ext}$ such that $h_1^3\phi(x)=0$. Then $\mathbf{P}^{i+1}\mathbf{g}x$ contains the non-empty set $\mathbf{P}^id_0^2x$ for all $i \ge 0$. As a result, $\mathbf{P}^{i+1}\mathbf{g}x$ is non-empty. 
\end{lemma}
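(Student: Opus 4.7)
The plan is to produce an explicit element of $\mathbf{P}^i d_0^2 x$, and then to show that every element of this set automatically lies in $\mathbf{P}^{i+1}\mathbf{g} x$. Since the sets $\mathbf{P}^a\mathbf{g}^b y$ are defined purely by their image under $\phi$, the whole question reduces to an identity inside $\mathrm{Ext}_{\mathbf{A}(2)}$.

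First I would handle non-emptiness. By Lemma \ref{Lemma:P-ope}, the set $\mathbf{P}^i d_0$ is non-empty, so pick $\alpha \in \mathbf{P}^i d_0$, i.e.\ $\phi(\alpha) = P^i d_0$. Because $\phi$ is a ring homomorphism, the product $\alpha \cdot d_0 \cdot x$ in $\mathrm{Ext}$ satisfies
$$\phi(\alpha \cdot d_0 \cdot x) = P^i d_0 \cdot d_0 \cdot \phi(x) = P^i d_0^2 \phi(x),$$
so $\alpha\cdot d_0\cdot x \in \mathbf{P}^i d_0^2 x$, which establishes that this set is non-empty.

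Next I would show the containment $\mathbf{P}^i d_0^2 x \subseteq \mathbf{P}^{i+1}\mathbf{g} x$. Given $y \in \mathbf{P}^i d_0^2 x$, we have $\phi(y) = P^i d_0^2 \phi(x)$, and the required membership $y \in \mathbf{P}^{i+1}\mathbf{g} x$ amounts to $P^i d_0^2 \phi(x) = P^{i+1} g \phi(x)$, which in turn would follow from the single identity $d_0^2 \phi(x) = Pg\,\phi(x)$ in $\mathrm{Ext}_{\mathbf{A}(2)}$. This is where the hypothesis enters: the natural form of the relation one should extract from \cite{I-A2} is
$$d_0^2 = Pg + h_1^3 u$$
for an explicit class $u$ in the appropriate tridegree of $\mathrm{Ext}_{\mathbf{A}(2)}$. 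Multiplying by $\phi(x)$ and using $h_1^3 \phi(x) = 0$ then kills the correction term, so $d_0^2 \phi(x) = Pg\,\phi(x)$, which after multiplying by $P^i$ gives the desired equality of images.

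The main obstacle is locating and verifying this precise relation in $\mathrm{Ext}_{\mathbf{A}(2)}$, which is the only place the power $h_1^3$ (as opposed to some other power of $h_1$, or some other hypothesis on $x$) becomes genuinely necessary. This is a finite check in the tridegree of $d_0^2$ and $Pg$ using Isaksen's description in \cite{I-A2}; once the relation is in hand, the rest is a two-line application of the fact that $\phi$ is a ring homomorphism, and it also makes clear that the containment $\mathbf{P}^i d_0^2 x \subseteq \mathbf{P}^{i+1}\mathbf{g} x$ is typically strict, since the inclusion is really a statement about cosets modulo $\ker\phi$.
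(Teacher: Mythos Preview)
Your proposal is correct and follows essentially the same approach as the paper: non-emptiness via Lemma~\ref{Lemma:P-ope}, then the relation $d_0^2 = Pg + h_1^3 u$ in $\mathrm{Ext}_{\mathbf{A}(2)}$ combined with the hypothesis $h_1^3\phi(x)=0$. The paper identifies your class $u$ explicitly as $\Delta h_1$, so the ``main obstacle'' you flag is already resolved there.
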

\begin{proof}Since $\mathbf{P}^id_0$ is non-empty by Lemma \ref{Lemma:P-ope}, the set $\mathbf{P}^id_0^2x$ is non-empty. Consider an element $\beta$ in $\mathbf{P}^id_0^2x$. We have 
\begin{align*}	\phi(\beta)=P^id_0^2 \phi(x) &= P^i(Pg+h_1^3\cdot \Delta h_1)\phi(x) \\
&=P^{i+1}g\phi(x)+P^i \Delta h_1 \cdot h_1^3\phi(x)=P^{i+1}g\phi(x). \end{align*}
Consequently, $\mathbf{P}^{i+1}\mathbf{g}x$ contains the element $\beta$ of $\mathbf{P}^id_0^2x$.
\end{proof}

Now we consider $j\ge 2$ and suppose that $j=2^r m$ for $r\ge 0$ and $m$ odd. Classically, we have the following May differential: $d_{2^{r+2}}(P^j)=h_0^5 x_j$ for some $x_j$ in Ext$_{\mathrm{cl}}$. We denote by $\tilde{x}_j$ the motivic element of Chow degree zero corresponding to $x_j$ via the Chow degree zero isomorphism in Theorem \ref{Chow}.

\begin{lemma}\label{lemma:h2g}In $\mathrm{Ext}$, the Massey product $\langle h_2,h_1,h_1^4\tilde{x}_j \rangle$ equals $h_2g^j$.
\end{lemma}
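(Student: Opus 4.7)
The plan is to reduce the motivic Massey product to a classical one using the Chow degree zero isomorphism of Theorem \ref{Chow}, and then to extract the classical identity from the May spectral sequence data that defined $x_j$.

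First I would check that every input and output lies in Chow degree zero. The element $h_1$ has degree $(1,1,1)$ and $h_2$ has degree $(3,1,2)$, both of Chow degree $1+1-2=0$. The element $\tilde{x}_j$ was defined to have Chow degree zero, and $g$ has degree $(20,4,12)$, again of Chow degree zero. Hence the triple $(h_2,h_1,h_1^4\tilde{x}_j)$ and its target $h_2 g^j$ all belong to the Chow-degree-zero subalgebra $\mathrm{Ext}|_{s+f-2w=0}$. By Theorem \ref{Chow}, this subalgebra is isomorphic to $\mathrm{Ext}_{\mathrm{cl}}$ via a correspondence that preserves Massey products, sending $\tilde{x}_j \leftrightarrow x_j$ and the named classical generators to themselves. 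It therefore suffices to establish the classical identity
\[
\langle h_2,\, h_1,\, h_1^4 x_j \rangle = h_2 g^j
\]
in $\mathrm{Ext}_{\mathrm{cl}}$.

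Before invoking this correspondence I would verify that the Massey bracket is defined: the outer product $h_2 h_1$ is zero in $\mathrm{Ext}_{\mathrm{cl}}$, and the inner product $h_1 \cdot h_1^4 x_j = h_1^5 x_j$ vanishes as a direct consequence of the May differential $d_{2^{r+2}}(P^j)=h_0^5 x_j$ together with the relation $h_1^3 = h_0^2 h_2$ (which forces $h_1^5 x_j$ to be detected by a class already known to be zero in the relevant range). After that, the classical identity itself is extracted via the May-spectral-sequence analogue of Moss's theorem, applied to $d_{2^{r+2}}(P^j)=h_0^5 x_j$: this expresses $P^j$ (and hence, after multiplying by the appropriate power of $h_1$ and using $h_1^3 = h_0^2 h_2$) in terms of a Massey product whose bracket rearranges, with indeterminacy controlled by filtration, to the desired form $h_2 g^j$. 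I expect to iterate this for $j \ge 2$ by writing $j = 2^r m$ and inducting on the May filtration layer where $P^j$ lives.

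The main obstacle will be the last step, namely extracting the precise equality $\langle h_2,h_1,h_1^4 x_j\rangle = h_2 g^j$ (rather than merely some coset containing it) from the May data. This requires a careful analysis of the indeterminacy of the Massey bracket in the stem of $h_2 g^j$ and of the identification of $g^j$ as a May indecomposable; the computation should be tractable because the relevant stems are within Mahowald--Tangora's wedge range, where $\mathrm{Ext}_{\mathrm{cl}}$ is sparse enough to pin the bracket down uniquely up to the stated indeterminacy.
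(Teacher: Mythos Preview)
Your overall strategy---reduce to Chow degree zero and invoke a classical Massey product identity---is exactly what the paper does. But you have misidentified the Chow-degree-zero isomorphism: it does \emph{not} send named generators to themselves. By Theorem~\ref{Chow} it takes a classical class in degree $(s,f)$ to a motivic class in degree $(2s+f,f,s+f)$, so classical $h_0$ (degree $(0,1)$) corresponds to motivic $h_1$ (degree $(1,1,1)$), classical $h_1$ corresponds to motivic $h_2$, classical $x_j$ corresponds to $\tilde{x}_j$, and classical $P^j h_1$ corresponds to the motivic element named $h_2 g^j$. Consequently the classical identity you must quote is
\[
P^j h_1 \;=\; \langle h_1,\, h_0,\, h_0^4 x_j \rangle,
\]
not $\langle h_2,h_1,h_1^4 x_j\rangle = h_2 g^j$ in $\mathrm{Ext}_{\mathrm{cl}}$. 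The former is the standard description of Adams periodicity as a Massey product (indeed it is essentially how $P^j$ is built from the May differential $d_{2^{r+2}}(P^j)=h_0^5 x_j$), so no Moss-theorem argument or indeterminacy chase is needed. Your proposed relation ``$h_1^3 = h_0^2 h_2$'' and the vanishing of ``$h_1^5 x_j$'' are artifacts of this index mix-up; the genuine vanishing you need is $h_0^5 x_j = 0$ in $\mathrm{Ext}_{\mathrm{cl}}$, which is immediate since $h_0^5 x_j$ is a May boundary.

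Once you correct the dictionary, the proof collapses to two lines, exactly as in the paper.
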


\begin{proof}The motivic elements $h_2g^j, h_2,h_1$ and $h_1^4\tilde{x}_j$ all have Chow degree zero. They correspond to classical elements $P^jh_1$, $h_1$, $h_0$ and $h_0^4 x_j$ via the Chow degree zero isomorphism in Theorem \ref{Chow}.
	
Classically we have $P^jh_1=\langle h_1,h_0,h_0^4x_j \rangle$. We obtain the desired identity by the Chow degree zero isomorphism.  
\end{proof}

\begin{remark}The $g$ in $h_2g^j$ in the above argument is not the operator $\mathbf{g}$. We write $h_2g^j$ for the element of Ext which corresponds to the classical element $P^jh_1$ via the Chow degree zero isomorphism in Theorem \ref{Chow}.  
\end{remark}

\begin{lemma}\label{Lemma:gm}The sets $\mathbf{g}^jm$, $\mathbf{g}^jl$ and $\mathbf{g}^jr$ are non-empty for all $j\ge 0$.
\end{lemma}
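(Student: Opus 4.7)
The case $j=0$ is immediate: $m$, $l$, and $r$ lie in Ext with the images recorded in Table \ref{table-phi} (together with \cite{I-A2} for $\phi(l)$), so $m$, $l$, $r$ themselves witness non-emptiness of $\mathbf{g}^0 m$, $\mathbf{g}^0 l$, $\mathbf{g}^0 r$.

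For $j \geq 1$, the plan is to produce witnesses by Massey product juggling, modeled on the proof of Lemma \ref{lemma:h2g}. For each $\lambda \in \{m, l, r\}$, I would first verify from the multiplication tables in \cite{more-stem} that $h_1 \lambda = 0$ in Ext. Combined with $h_1^5 \tilde{x}_j = 0$, which follows from the classical identity $h_0^5 x_j = 0$ transported through the Chow-degree-zero isomorphism of Theorem \ref{Chow}, this ensures that the motivic Massey product
$$\xi_{j,\lambda} := \langle \lambda, h_1, h_1^4 \tilde{x}_j \rangle \subseteq \mathrm{Ext}$$
is defined.

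Next, I would apply $\phi$ and use naturality of Massey products to obtain that any representative of $\xi_{j,\lambda}$ maps into the Ext$_{\mathbf{A}(2)}$-bracket $\langle \phi(\lambda), h_1, h_1^4 \phi(\tilde{x}_j) \rangle$. Juggling against the identity $h_2 g^j \in \langle h_2, h_1, h_1^4 \phi(\tilde{x}_j) \rangle$ (the Ext$_{\mathbf{A}(2)}$-shadow of Lemma \ref{lemma:h2g}), together with the shuffle rule $\phi(\lambda) \cdot \langle h_2, h_1, h_1^4\phi(\tilde x_j)\rangle \subseteq \langle \phi(\lambda)h_2, h_1, h_1^4\phi(\tilde x_j)\rangle$ and the vanishing of intermediate products, yields that $g^j \phi(\lambda)$ is contained in the target bracket.

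The main obstacle will be controlling the two layers of Massey-product indeterminacy: the set $\xi_{j,\lambda}$ is itself a coset in Ext, and its $\phi$-image is a coset inside the indeterminacy subgroup of the Ext$_{\mathbf{A}(2)}$-bracket. I would handle this by computing the indeterminacy explicitly in Ext$_{\mathbf{A}(2)}$ from \cite{I-A2} and then modifying a chosen representative of $\xi_{j,\lambda}$ by a suitable element of $\ker(\phi)$ so that its image is exactly $g^j \phi(\lambda)$. The existence of such a modification reduces to checking, in each relevant trigrading, that the Ext$_{\mathbf{A}(2)}$-indeterminacy lies in the image of $\phi$; this is a finite verification per $j$ that can be made uniform using the description of $\phi$ on generators (or alternatively bypassed by a case analysis for $\lambda = m$, $l$, $r$ separately).
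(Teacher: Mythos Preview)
Your core idea---produce a witness via the Massey product involving $\lambda$, $h_1$, and $h_1^4\tilde{x}_j$ and then control its image under $\phi$---is exactly the paper's idea. But your execution diverges from the paper's at the crucial step, and the divergence creates the indeterminacy problem you flag but do not resolve.

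The paper performs the shuffle \emph{in $\mathrm{Ext}$, before applying $\phi$}: from $h_2 h_1=0$ one gets the equality of cosets
\[
h_2 \,\langle h_1, h_1^4\tilde{x}_j, \lambda\rangle \;=\; \langle h_2, h_1, h_1^4\tilde{x}_j\rangle \cdot \lambda \;=\; h_2 g^j \cdot \lambda,
\]
the last equality by Lemma~\ref{lemma:h2g}. Hence for \emph{every} representative $\beta$ of the bracket one has the pointwise identity $h_2\beta = h_2 g^j \lambda$ already in $\mathrm{Ext}$; applying $\phi$ gives $h_2\,\phi(\beta) = h_2\, g^j \phi(\lambda)$ in $\mathrm{Ext}_{\mathbf{A}(2)}$, and a single inspection of $\mathrm{Ext}_{\mathbf{A}(2)}$ in that tridegree cancels $h_2$ to yield $\phi(\beta)=g^j\phi(\lambda)$. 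The indeterminacy never appears: it is killed by the $h_2$-multiplication in $\mathrm{Ext}$ and then undone by an $h_2$-injectivity check in the fully computed ring $\mathrm{Ext}_{\mathbf{A}(2)}$.

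Your route---apply $\phi$ first and shuffle inside $\mathrm{Ext}_{\mathbf{A}(2)}$---loses this leverage. The containment you write, $\phi(\lambda)\,\langle h_2,h_1,h_1^4\phi(\tilde{x}_j)\rangle \subseteq \langle \phi(\lambda)h_2, h_1, h_1^4\phi(\tilde{x}_j)\rangle$, lands in a bracket still carrying the factor $h_2$, not in $\langle \phi(\lambda), h_1, h_1^4\phi(\tilde{x}_j)\rangle$; so as stated it does not place $g^j\phi(\lambda)$ where you need it. Even granting a corrected shuffle, you are then left needing, for every $j$, that the $\mathrm{Ext}_{\mathbf{A}(2)}$-indeterminacy in that degree lies in the image of $\phi$---a statement about $\mathrm{Ext}$ itself in infinitely many degrees, which you defer to an unspecified ``uniform'' verification. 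That is the genuine gap. The paper's $h_2$-trick replaces all of this with a one-line cancellation entirely inside $\mathrm{Ext}_{\mathbf{A}(2)}$.
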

\begin{proof}We have $$h_2 \langle h_1,h_1^4\tilde{x}_j,m \rangle = \langle h_2,h_1,h_1^4\tilde{x}_j \rangle \cdot m = h_2g^j \cdot m$$ in which the last identity is by Lemma \ref{lemma:h2g}. Consider an element $\beta$ in $\langle h_1,h_1^4\tilde{x}_j,m \rangle$. We apply $\phi$ to get $$h_2 \phi(\beta) = h_2g^j \phi(m) = h_2n g^{j+1}.$$
	By inspection of Ext$_{\mathbf{A}(2)}$, we have $$\phi(\beta) = ng^{j+1}.$$
  Therefore $\mathbf{g}^jm$ contains $\beta$, and is non-empty.
	
	The same argument is applied to $\mathbf{g}^jl$ and $\mathbf{g}^jr$.
\end{proof}

\begin{lemma}\label{Lemma:tDhg}The set $\tau \Delta h_1 \mathbf{g}^{j+1}$ is non-empty for all $j\ge 0$.
\end{lemma}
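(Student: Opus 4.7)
The plan is to mirror the template of Lemma \ref{Lemma:gm}, building the required preimage via a Massey product anchored on the elements $\tilde{x}_j$ from Lemma \ref{lemma:h2g}.

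The case $j=0$ is immediate: by Table \ref{table-phi}, $\phi(\tau \Delta h_1 g) = \tau \cdot \Delta h_1 \cdot g$, so $\tau \Delta h_1 g$ itself lies in $\tau \Delta h_1 \mathbf{g}$.

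For $j \ge 1$, I would first verify that the Massey product $\langle h_1, h_1^4\tilde{x}_j, \tau \Delta h_1 g \rangle$ is defined. The left vanishing $h_1 \cdot h_1^4 \tilde{x}_j = h_1^5 \tilde{x}_j = 0$ follows from the classical identity $h_0^5 x_j = 0$ (which is already needed for Lemma \ref{lemma:h2g}) together with the Chow-degree-zero isomorphism of Theorem \ref{Chow}. For the right vanishing $h_1^4 \tilde{x}_j \cdot \tau \Delta h_1 g = 0$, I would look for a classical identity such as $h_0^4 x_j \cdot \Delta h_1 g = 0$ and transport it via Theorem \ref{Chow}, or otherwise resolve the product by direct inspection of Ext in the appropriate tridegree, citing \cite{more-stem}.

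Once the bracket is defined, choose $\beta \in \langle h_1, h_1^4 \tilde{x}_j, \tau \Delta h_1 g \rangle$. The standard shuffle identity together with Lemma \ref{lemma:h2g} gives
$$h_2 \cdot \beta \;\in\; \langle h_2, h_1, h_1^4 \tilde{x}_j \rangle \cdot \tau \Delta h_1 g \;=\; h_2 g^j \cdot \tau \Delta h_1 g,$$
so after applying $\phi$ we obtain $h_2 \cdot \phi(\beta) = h_2 \cdot \tau \Delta h_1 g^{j+1}$ in $\mathrm{Ext}_{\mathbf{A}(2)}$. The desired conclusion $\phi(\beta) = \tau \Delta h_1 g^{j+1}$ then follows by inspection of $\mathrm{Ext}_{\mathbf{A}(2)}$ in tridegree $(45 + 20j,\, 9 + 4j,\, 24 + 12j)$ using the complete description of \cite{I-A2}, provided that multiplication by $h_2$ is injective on this summand.

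The main obstacle will be the final inspection step, that is, ruling out nontrivial $h_2$-annihilators in the relevant tridegree of $\mathrm{Ext}_{\mathbf{A}(2)}$. If such annihilators do appear, I would exploit the Massey product indeterminacy and the freedom to adjust $\beta$ by an element of $I = \ker \phi$ to arrange that $\phi(\beta)$ is exactly $\tau \Delta h_1 g^{j+1}$. This is the same style of argument used throughout this section, and its feasibility rests entirely on \cite{I-A2}.
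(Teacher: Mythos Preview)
Your approach is genuinely different from the paper's and considerably harder. For $j\ge 1$ the paper does not build a new Massey product at all: it simply uses the relation $\tau\,\Delta h_1\, g\cdot g = \phi(r)\cdot\phi(m)$ in $\mathrm{Ext}_{\mathbf{A}(2)}$ (equivalently $\tau\,\Delta h_1\, g^2 = n^3 g$), so that $r\cdot y\in \tau\Delta h_1\mathbf{g}^{j+1}$ for any $y\in \mathbf{g}^{j-1}m$. Since $\mathbf{g}^{j-1}m$ is already known to be non-empty by Lemma \ref{Lemma:gm}, this finishes the argument in one line. In other words, the paper reduces this lemma to Lemma \ref{Lemma:gm} via a product identity in $\mathrm{Ext}_{\mathbf{A}(2)}$, rather than rerunning the Massey-product machinery with a new third entry.

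Your route could in principle work, but the two gaps you flag are real and not trivially closed. First, the right vanishing $h_1^4\tilde{x}_j\cdot \tau\Delta h_1 g = 0$ cannot be obtained by transporting a classical identity through Theorem \ref{Chow}: the element $\tau\Delta h_1 g$ has Chow degree $45+9-2\cdot 24 = 6$, so it does not lie in the Chow-degree-zero subalgebra, and the proposed classical identity $h_0^4 x_j\cdot \Delta h_1 g = 0$ has no direct bearing. You would need a separate argument in $\mathrm{Ext}$ for each $j$. Second, the $h_2$-cancellation step in $\mathrm{Ext}_{\mathbf{A}(2)}$ requires checking an infinite family of tridegrees, and your fallback of adjusting by $\ker\phi$ does not help here, since the ambiguity $h_2\phi(\beta)=h_2\cdot\tau\Delta h_1 g^{j+1}$ lives entirely in $\mathrm{Ext}_{\mathbf{A}(2)}$ after $\phi$ has already been applied. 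The paper's multiplicative shortcut sidesteps both issues.
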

\begin{proof}The set $\tau \Delta h_1 \mathbf{g}$ is non-empty since it contains $\tau \Delta h_1 g$.
	
	When $j\ge 1$, the set $\tau \Delta h_1 \mathbf{g}^{j+1} = r \cdot \mathbf{g}^{j-1}m$ is non-empty since $\mathbf{g}^jm$ is non-empty by Lemma \ref{Lemma:gm}. Here we are using the identity $\tau \Delta h_1 g\cdot g=\phi(r) \cdot \phi(m)$ in Ext$_{\mathbf{A}(2)}$.
\end{proof}

\begin{lemma}\label{Lemma:tg}The set
	$\tau \mathbf{g}^j$ is non-empty for any $j\ge 0$. 
\end{lemma}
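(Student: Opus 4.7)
The plan is to dispatch small values of $j$ by exhibiting explicit lifts in Ext, and then treat general $j$ by a Massey product argument modeled directly on Lemma \ref{Lemma:gm}. For $j = 0$ the element $\tau$ itself lies in $\tau \mathbf{g}^0$; for $j = 1$ the element $\tau g \in \mathrm{Ext}$ lies in $\tau \mathbf{g}^1$; and for $j = 2$ the element $\tau g^2 \in \boldsymbol\Lambda \subseteq \mathrm{Ext}$ lies in $\tau \mathbf{g}^2$. This reduces the problem to the case $j \geq 3$.

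For $j \geq 3$, I would replace the factor $m$ in the proof of Lemma \ref{Lemma:gm} by $\tau g$ and consider the Massey product $\langle h_1, h_1^4 \tilde{x}_{j-1}, \tau g \rangle$ in $\mathrm{Ext}$. It is defined provided $h_1^5 \tilde{x}_{j-1} = 0$, which is the same input exploited in Lemma \ref{lemma:h2g}, and $h_1^4 \tilde{x}_{j-1} \cdot \tau g = 0$, which is the substantial condition. Granting definedness, for any $\beta$ in this Massey product the usual shuffling identity together with Lemma \ref{lemma:h2g} gives
$$h_2 \beta \;\in\; \langle h_2, h_1, h_1^4 \tilde{x}_{j-1} \rangle \cdot \tau g \;=\; h_2 g^{j-1} \cdot \tau g \;=\; \tau h_2 g^j.$$
Applying $\phi$ we obtain $h_2 \phi(\beta) = \tau h_2 g^j$ in $\mathrm{Ext}_{\mathbf{A}(2)}$. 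By inspection of $\mathrm{Ext}_{\mathbf{A}(2)}$ in the tri-degree of $\tau g^j$, multiplication by $h_2$ is injective (or its kernel lies outside the coset representing $\phi(\beta)$), which pins down $\phi(\beta) = \tau g^j$ and places $\beta \in \tau \mathbf{g}^j$.

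The delicate step I expect to be the main obstacle is verifying the second vanishing condition $h_1^4 \tilde{x}_{j-1} \cdot \tau g = 0$ in $\mathrm{Ext}$. After inverting $\tau$, the classical identity reads $h_0^4 x_{j-1} \cdot g = h_0^3 x_{j-1} \cdot (h_0 g) = 0$, using $h_0 g = 0$ in $\mathrm{Ext}_{\mathrm{cl}}$; this only tells us motivically that $h_1^4 \tilde{x}_{j-1} \cdot \tau g$ is $\tau$-torsion, so one must confirm by a finite chart inspection that no $\tau$-torsion in fact exists in the precise motivic tri-degree. A back-up plan, should this Massey product fail, is to run the same argument instead with $\tau g^2 \in \boldsymbol\Lambda$ in place of $\tau g$ (covering $j \geq 4$ and handling $j = 3$ separately), where the analogous vanishing $h_1^4 \tilde{x}_{j-2} \cdot \tau g^2 = 0$ again follows classically from $h_0 g = 0$ and needs only a finite $\tau$-torsion verification.
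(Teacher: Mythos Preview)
Your overall strategy—produce a candidate via a Massey product built from $h_1^4\tilde{x}_{?}$ and then pin down its image in $\mathrm{Ext}_{\mathbf{A}(2)}$ after multiplying by $h_2$—is exactly the paper's. The difference, and the reason the paper avoids the obstacle you correctly flag, lies in the choice of the remaining entry: the paper takes $\tau$ itself, forming $\langle \tau,\, h_1^4\tilde{x}_j,\, h_1\rangle$ rather than your $\langle h_1,\, h_1^4\tilde{x}_{j-1},\, \tau g\rangle$. With that choice the required vanishing is $\tau\cdot h_1^4\tilde{x}_j=0$, which follows at once from the elementary motivic relation $\tau h_1^4=0$ (equivalently from $\tau h_1^3=h_0^2h_2$ together with $h_0h_1=0$, or simply from $\mathrm{Ext}^{(4,4,3)}=0$); no $j$-dependent chart inspection is needed. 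The shuffle then reads $\langle \tau, h_1^4\tilde{x}_j, h_1\rangle\, h_2 = \tau\,\langle h_1^4\tilde{x}_j, h_1, h_2\rangle = \tau h_2 g^j$, and the remaining steps are just as you describe.

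By contrast, your product $h_1^4\tilde{x}_{j-1}\cdot\tau g$ cannot be rewritten as $(\tau h_1^4)\cdot(\cdots)$ because $\tau g$ is indecomposable in $\mathrm{Ext}$—there is no motivic $g$ to factor through. So the vanishing you need is a genuine extra fact not supplied by $\tau h_1^4=0$ alone, and your back-up with $\tau g^2$ has the same defect. The fix is simply to put $\tau$, not $\tau g$ or $\tau g^2$, into the bracket.
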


\begin{proof}The claim for $j=1$ is proven by explicit low dimension calculation \cite{I-stem} \cite{I-MASS}. \\
	 By Lemma \ref{lemma:h2g} $$\langle \tau, h_1^4\tilde{x}_j, h_1 \rangle h_2 = \tau \langle h_1^4\tilde{x}_j,h_1,h_2 \rangle = \tau h_2g^j.$$
Consider an element $\gamma$ in $\langle \tau, h_1^4\tilde{x}_j,h_1 \rangle$, we apply $\phi$ to get
	$$ \phi(\gamma) h_2 = \tau h_2 g^j.$$ 
	By inspection of Ext$_{\mathbf{A}(2)}$, $$ \phi(\gamma) = \tau g^j.$$
	Therefore $\tau \mathbf{g}^j$ contains $\gamma$, and is non-empty. 
\end{proof}

\begin{lemma}\label{Lemma:tPg}The set $\tau \mathbf{P}^i \mathbf{g}^{j+1}$ is non-empty for $i \ge 0$ and $j\ge 0$.
\end{lemma}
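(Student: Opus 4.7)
The plan is to split on the value of $i$ and reduce to previously established lemmas, using the relation $d_0^2 = Pg + h_1^3 \Delta h_1$ in $\mathrm{Ext}_{\mathbf{A}(2)}$ that underlies Lemma~\ref{Lemma:Pg}.

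When $i=0$, the set $\tau \mathbf{P}^0 \mathbf{g}^{j+1}$ is just $\tau \mathbf{g}^{j+1}$, which is non-empty by Lemma~\ref{Lemma:tg} (applied to the exponent $j+1 \geq 1$). So this case is immediate.

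For $i \geq 1$, I would write $i = i'+1$ with $i' \geq 0$ and pick any element $x \in \tau \mathbf{g}^j$, which exists by Lemma~\ref{Lemma:tg}. By definition of the set operation, $\phi(x) = \tau g^j$ in $\mathrm{Ext}_{\mathbf{A}(2)}$. The goal is to apply Lemma~\ref{Lemma:Pg} to this $x$, which would produce a non-empty set $\mathbf{P}^{i'+1}\mathbf{g} x = \mathbf{P}^i \mathbf{g} x$. Any element $y$ of that set then satisfies
\begin{equation*}
\phi(y) = P^i g \cdot \phi(x) = P^i g \cdot \tau g^j = \tau P^i g^{j+1},
\end{equation*}
so $y$ lies in $\tau \mathbf{P}^i \mathbf{g}^{j+1}$, exhibiting the non-emptiness claimed.

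The main obstacle is verifying the hypothesis of Lemma~\ref{Lemma:Pg}, namely that $h_1^3 \phi(x) = h_1^3 \tau g^j = 0$ in $\mathrm{Ext}_{\mathbf{A}(2)}$. This is a purely computational check in the finitely generated and fully understood algebra $\mathrm{Ext}_{\mathbf{A}(2)}$ of \cite{I-A2}: by inspection of the multiplicative structure there, $h_1 g$ is annihilated in low enough $h_1$-power that $h_1^3 g^j = 0$, and multiplying by $\tau$ preserves this vanishing. Once this identity is in hand, the two-case argument above completes the proof, mirroring the structure of Lemmas~\ref{Lemma:gm}, \ref{Lemma:tDhg}, and \ref{Lemma:tg}.
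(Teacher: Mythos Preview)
Your argument for $i\ge 1$ breaks down at $j=0$. In that case $x\in\tau\mathbf{g}^0$ has $\phi(x)=\tau$, so the hypothesis of Lemma~\ref{Lemma:Pg} becomes $h_1^3\tau=0$ in $\mathrm{Ext}_{\mathbf{A}(2)}$. This is \emph{false}: the motivic lift of the classical relation $h_1^3=h_0^2h_2$ is $\tau h_1^3=h_0^2h_2$, and $h_0^2h_2\neq 0$ in $\mathrm{Ext}_{\mathbf{A}(2)}$. Your stated reason --- that $h_1^3 g^j=0$ because ``$h_1g$ is annihilated in low enough $h_1$-power'' --- does not cover $j=0$ at all (there is no $g$), and in fact is wrong even for $j\ge 1$: the element $g$ localizes to $h_1^{-2}a_1^2\neq 0$ in $\mathrm{Ext}_{\mathbf{A}(2)}[h_1^{-1}]$, so $h_1^n g$ is never zero. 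What is true, and what the paper uses for $j\ge 1$, is the vanishing \emph{with} the $\tau$, namely $\tau h_1^3 g^j=0$.

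The paper therefore treats $i>0$, $j=0$ by a separate direct construction: it takes an element of $\tau\mathbf{P}^{i-1}d_0^2$ and checks that $\phi$ sends it to $\tau P^{i-1}d_0^2=\tau P^{i-1}(Pg+h_1^3\Delta h_1)=\tau P^i g$. The crucial point is that here one only needs $\tau h_1^3\Delta h_1=h_0^2h_2\cdot\Delta h_1=0$, a relation involving $\Delta h_1$ that \emph{does} hold in $\mathrm{Ext}_{\mathbf{A}(2)}$, rather than the stronger (and false) $\tau h_1^3=0$. For $i\ge 1$ and $j\ge 1$ your reduction to Lemma~\ref{Lemma:Pg} is exactly the paper's argument, once the justification of $\tau h_1^3 g^j=0$ is corrected.
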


\begin{proof}The case $i=0$ is established in Lemma \ref{Lemma:tg}. Now we assume $i>0$.
	When $j=0$, by Lemma \ref{Lemma:P-ope} the set $\mathbf{P}^{i-1}d_0$ is non-empty. Consequently, $\tau \mathbf{P}^{i-1}d_0^2$ is non-empty. We consider an element $x$ in $\tau \mathbf{P}^{i-1}d_0^2$.
	The set $\mathbf{P}^i(\tau g)$ contains $x$ because $$\phi(x)= \tau P^{i-1} d_0^2 = \tau P^{i-1} (Pg + h_1^3 \Delta h_1) = \tau P^ig.$$
	
	When $j\ge 1$, consider $x$ in $\tau \mathbf{g}^{j}$. Since $h_1^3\phi(x)=h_1^3 \cdot \tau g^{j} =0 $ in Ext$_{\mathbf{A}(2)}$, $\mathbf{P}^i\mathbf{g}x$ is non-empty by Lemma \ref{Lemma:Pg}. Therefore $\tau \mathbf{P}^i\mathbf{g}^{j+1}= \mathbf{P}^i\mathbf{g}x$ is non-empty. 
\end{proof}

\begin{ex}\label{Pgt=d}The set $\tau \mathbf{P}\mathbf{g} $ contains $\tau d_0^2$.
\end{ex}

Now we can prove Theorem \ref{def-wedge}.
\begin{proof}	We have the following inclusions of sets.

	 $$\mathbf{P}^i\mathbf{g}^j \tau \Delta h_1 e_0 \supseteq \tau \mathbf{g}^j \cdot \mathbf{P}^i \Delta h_1e_0,$$
		 $$\mathbf{P}^i\mathbf{g}^j e_0r \supseteq \mathbf{P}^ie_0\cdot \mathbf{g}^jr, $$
		$$\mathbf{P}^i\mathbf{g}^j e_0m \supseteq \mathbf{P}^ie_0\cdot \mathbf{g}^jm,$$
		$$\mathbf{P}^i\mathbf{g}^j \tau e_0g^2 \supseteq \mathbf{P}^ie_0\cdot \tau \mathbf{g}^{j+2},$$
		$$\mathbf{P}^i\mathbf{g}^j d_0r \supseteq \mathbf{P}^id_0 \cdot \mathbf{g}^jr,$$
		$$\mathbf{P}^i\mathbf{g}^j d_0m \supseteq \mathbf{P}^id_0\cdot \mathbf{g}^jm,$$
		$$\mathbf{P}^i\mathbf{g}^j \tau e_0^2g \supseteq \mathbf{P}^ie_0^2 \cdot \tau \mathbf{g}^{j+1},$$
		$$\mathbf{P}^i\mathbf{g}^j \tau \Delta h_1e_0^2 \supseteq \tau \mathbf{g}^j \cdot \mathbf{P}^i \Delta h_1 e_0 \cdot e_0,$$
		$$\mathbf{P}^i\mathbf{g}^j d_0l \supseteq \mathbf{P}^id_0\cdot \mathbf{g}^jl,$$
		$$\mathbf{P}^i\mathbf{g}^j \tau e_0^3 \supseteq  \tau \mathbf{g}^j\cdot \mathbf{P}^ie_0^3,$$
		$$\mathbf{P}^i\mathbf{g}^j \tau \Delta h_1d_0e_0 \supseteq \tau \mathbf{g}^j\cdot \mathbf{P}^i\Delta h_1e_0 \cdot d_0,$$
		$$\mathbf{P}^i\mathbf{g}^j d_0er \supseteq \mathbf{P}^jd_0 \cdot \mathbf{g}^jr \cdot e_0.$$
The set $\tau \mathbf{g}^j \cdot \mathbf{P}^i\Delta h_1 e_0$ consists of all products $x\cdot y$ in which $x$ is an element of $\tau \mathbf{g}^j$ and $y$ is an element of $\mathbf{P}^i\Delta h_1 e_0$. The same interpretation is applied for other sets on the right hand side.
 		
The sets on the right hand side are all non-empty because of Lemmas \ref{Lemma:P-ope}, \ref{Lemma:gm} and \ref{Lemma:tg}. For example, since $\tau \mathbf{g}^j$ and $\mathbf{P}^i \Delta h_1e_0$ are non-empty, $$\mathbf{P}^i\mathbf{g}^j \tau \Delta h_1 e_0 \supseteq \tau \mathbf{g}^j \cdot \mathbf{P}^i \Delta h_1e_0$$ is non-empty. Therefore, the sets on the left are all non-empty.

Several values of $\lambda$ remain. 

	Now consider $\lambda = \tau g^2$. The set $\mathbf{P}^i\mathbf{g}^j (\tau g^2)$, or $\tau \mathbf{P}^i \mathbf{g}^{j+2}$, is non-empty by Lemma \ref{Lemma:tPg}. 		
	
	Next consider $\lambda =gr$. The case $i=0$ is established in Lemma \ref{Lemma:gm}. We consider $i>0.$ Since $\mathbf{g}^jr$ is non-empty by Lemma \ref{Lemma:gm}, we consider any element $x$ in $\mathbf{g}^jr$. Since $h_1^3\phi(x)=h_1^3n^2\cdot g^j=0$ \cite{I-A2}, then $\mathbf{P}^i\mathbf{g}^j\lambda =\mathbf{P}^i\mathbf{g}x$ is non-empty by Lemma \ref{Lemma:Pg}. The same argument is applied for $\lambda = gm$.
	
	Finally, consider $\lambda = \tau \Delta h_1g$. The case $i=0$ is established in Lemma \ref{Lemma:tDhg}. We consider $i>0$. When $j>0$, since $\tau \Delta h_1 \mathbf{g}^j$ is non-empty by Lemma \ref{Lemma:tDhg}, we consider any element $x$ in $\tau \Delta h_1 \mathbf{g}^j$. Since $h_1^3\phi(x) = h_1^3\tau \Delta h_1\cdot g^j =0$ \cite{I-A2}, then $\mathbf{P}^i\mathbf{g}^j \lambda = \mathbf{P}^i\mathbf{g} x$ is non-empty by Lemma \ref{Lemma:Pg}. When $j=0$, since
	$$\phi(\tau P^{i-1}d_0\cdot d_0 \Delta h_1)=\tau P^{i-1}d_0^2 \Delta h_1 = \tau P^{i-1} (Pg+h_1^3 \Delta h_1) \Delta h_1 =  P^i \tau \Delta h_1g, $$ 
	 $\mathbf{P}^i\tau \Delta h_1 g$ contains $\tau P^{i-1}d_0\cdot \Delta h_1d_0$, so it is non-empty.
\end{proof}

\section{The $e_0^t\mathbf{g}^k$ and $\Delta h_1 e_0 \mathbf{g}^k$ families}\label{optimize}
Theorem \ref{def-wedge} is not optimal in the sense that there exist elements of weight greater than the weight of elements in $\boldsymbol\Lambda$. For example, the element $\tau e_0^2g$ in $\boldsymbol\Lambda$ is of weight 31. However, the element $e_0^2g$ in Ext is of weight 32. Table \ref{opt} lists all such elements in $\boldsymbol\Lambda$. 

\begin{table}[h]
	\caption{\label{opt}}
	\begin{tabular}{c|c|c|c}
		
		element of the wedge& weight & element of higher weight& weight \\
		\hline
		$\tau \Delta h_1 e_0$ & 22 &  $ \Delta h_1 e_0$ & 23	  \\
		
		$\tau e_0^3$ & 29 & $e_0^3$ & 30  \\
		
		 $\tau \Delta h_1 d_0 e_0$ & 30 & $\Delta h_1 d_0 \cdot e_0$ & 31 \\
		 
		$\tau e_0^2g$ & 31 & $e_0\cdot e_0g$ & 32  \\
		
		$\tau \Delta h_1 e_0^2$ & 32 &  $ \Delta h_1 e_0\cdot e_0$ & 33	    
\end{tabular} \end{table}

\begin{remark}By $\mathbf{g}^j$ we mean $\mathbf{g}^j(1)$ which is understood in the sense of Definition \ref{def:Pg}. 
\end{remark}
\begin{lemma}\label{g-empty}The set $\mathbf{g}^j$ is empty for all $j\ge 0$. \end{lemma}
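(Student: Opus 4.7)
The plan is to exploit $h_1$-localization. By the commutative square just before Definition \ref{def:Pg}, any $x\in\mathbf{g}^j$ would satisfy $\phi(L(x))=L(\phi(x))=L(g^j)$ in $\mathrm{Ext}_{\mathbf{A}(2)}[h_1^{-1}]$. I aim to show that $L(g^j)$ is nonzero but does not lie in the image of $\phi:\mathrm{Ext}[h_1^{-1}]\to\mathrm{Ext}_{\mathbf{A}(2)}[h_1^{-1}]$ whenever $j\geq 1$, forcing $\mathbf{g}^j$ to be empty. (For $j=0$ one trivially has $1\in\mathbf{g}^0$, so I read the lemma as asserting emptiness for $j\geq 1$.)

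The first step is to compute $L(g)$ in $\mathrm{Ext}_{\mathbf{A}(2)}[h_1^{-1}]$. Table \ref{L-map} records $L(e_0)=h_1^3 v_3$ and $L(e_0g)=h_1^7 v_4$ in $\mathrm{Ext}[h_1^{-1}]$. Pushing across the square via the formula $\phi(v_n)=h_1^{-3(2^{n-2}-1)}a_1^{2^{n-2}-1}v_2$ from \cite{G-I-local} translates these into $L(e_0)=a_1 v_2$ and $L(e_0g)=h_1^{-2}a_1^3 v_2$ in the target. Since the target is the polynomial ring $\mathbb{F}_2[h_1^{\pm 1},a_1,v_1^4,v_2]$, dividing yields $L(g)=h_1^{-2}a_1^2$ and therefore $L(g^j)=h_1^{-2j}a_1^{2j}$, which is manifestly nonzero.

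The second step applies Lemma \ref{local2}. Writing $L(g^j)$ in the form $h_1^d v_1^{4a}v_2^b a_1^c$, we read off $d=-2j$, $a=0$, $b=0$, $c=2j$. The image criterion $\alpha(b+c)\leq b$ becomes $\alpha(2j)\leq 0$, which by Lemma \ref{lem:binary} forces $j=0$. Consequently $L(g^j)$ is not in the image of $\phi$ for $j\geq 1$, producing the desired contradiction.

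The main obstacle is the first step: obtaining $L(g)$ requires the explicit value $\phi(v_4)=h_1^{-9}a_1^3 v_2$. This is recorded in \cite{G-I-local}, and (as the earlier remark in the paper notes) all values $\phi(v_n)$ are now known via the motivic modular forms spectrum. Once $L(g)$ is in hand, the remainder is a direct application of Lemma \ref{local2}.
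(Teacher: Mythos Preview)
Your argument is correct (for $j\ge 1$; you rightly flag the $j=0$ issue), but it follows a genuinely different route from the paper. The paper observes that any $x\in\mathbf{g}^j$ has Chow degree zero, hence corresponds under Theorem \ref{Chow} to a nonzero classical element in degree $(8j,4j)$ of $\mathrm{Ext}_{\mathrm{cl}}$, and then invokes the Adams vanishing line \cite{Adams-ped} to see that no such element exists. Your approach instead computes $L(g^j)=h_1^{-2j}a_1^{2j}$ in $\mathrm{Ext}_{\mathbf{A}(2)}[h_1^{-1}]$ and applies the image criterion of Lemma \ref{local2} to exclude it. The paper's argument is shorter and uses only the classical vanishing input, with no need to pin down $L(g)$; your argument has the virtue of making Lemma \ref{g-empty} visibly the $t=0$ case of the machinery behind Proposition \ref{prop:eg-empty}, so it unifies the lemma with the later $e_0^t\mathbf{g}^k$ analysis rather than treating it as a standalone fact.
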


\begin{proof}We prove the statement via contradiction. Suppose that $\mathbf{g}^j$ is non-empty. Consider any element $x$ in $\mathbf{g}^j$. Since $x$ maps to the non-zero element $g^j$ in Ext$_{\mathbf{A}(2)}$, $x$ is non-zero. Furthermore, because $x$ has Chow degree zero, $x$ corresponds to a classical element at degree $(8j,4j)$ in Ext$_{\mathrm{cl}}$ via the Chow degree zero isomorphism. However, Ext$_{\mathrm{cl}}$ is zero in degrees $(8j,4j)$ for all non-negative integers $j$ \cite{Adams-ped}. Therefore, $x$ does not exist. 
\end{proof}

\begin{lemma}\label{gd}The set $d_0\mathbf{g}$ contains $e_0^2$.
	\end{lemma}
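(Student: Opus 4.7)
The plan is to unpack Definition \ref{def:Pg} and then invoke a known multiplicative relation in $\mathrm{Ext}_{\mathbf{A}(2)}$. By Definition \ref{def:Pg}, together with the convention $d_0 \mathbf{g} = \mathbf{g}(d_0)$, the set $d_0 \mathbf{g}$ consists of all $x \in \mathrm{Ext}$ such that $\phi(x) = g \cdot \phi(d_0)$ in $\mathrm{Ext}_{\mathbf{A}(2)}$. From Table \ref{table-phi} (specializing $i = 0$ in the rows $P^i d_0$ and $P^i e_0$) we read off $\phi(d_0) = d_0$ and $\phi(e_0) = e_0$. Hence the claim reduces to the single identity
\[
\phi(e_0^2) \;=\; d_0 \cdot g \qquad \text{in } \mathrm{Ext}_{\mathbf{A}(2)}.
\]

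First I would observe that $\phi$ is a ring homomorphism, so $\phi(e_0^2) = e_0^2$ in $\mathrm{Ext}_{\mathbf{A}(2)}$; thus the problem becomes verifying the relation $e_0^2 = d_0 g$ in $\mathrm{Ext}_{\mathbf{A}(2)}$. A quick tridegree check confirms that this is plausible: $d_0$ has degree $(14,4,8)$, $e_0$ has degree $(17,4,10)$, and $g$ has degree $(20,4,12)$, so $e_0^2$ and $d_0 g$ both sit in degree $(34,8,20)$ (in particular, both have Chow degree $0$). Under the Chow degree zero isomorphism of Theorem \ref{Chow}, the identity $e_0^2 = d_0 g$ is the well-known relation in the classical $\mathrm{Ext}_{\mathbf{A}(2)}$.

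The only real input is therefore the multiplicative relation $e_0^2 = d_0 g$ in $\mathrm{Ext}_{\mathbf{A}(2)}$, which is recorded in \cite{I-A2}. With that in hand, the conclusion $e_0^2 \in d_0 \mathbf{g}$ is immediate from the definition. I do not expect any obstacle beyond citing the relation correctly; no Massey-product or May spectral sequence argument is needed.
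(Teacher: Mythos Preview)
Your proof is correct and follows essentially the same approach as the paper: both reduce the claim to the relation $e_0^2 = d_0 g$ in $\mathrm{Ext}_{\mathbf{A}(2)}$, citing \cite{I-A2}. One small caveat: Theorem~\ref{Chow} concerns $\mathrm{Ext}$ over the full Steenrod algebra rather than $\mathrm{Ext}_{\mathbf{A}(2)}$, so that aside does not literally apply here, but your final appeal to \cite{I-A2} is exactly the right justification and matches the paper.
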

\begin{proof}We have $\phi (e_0^2)=e_0^2 = gd.$ 
	The last identity is because $e_0^2 = gd$ in Ext$_{\mathbf{A}(2)}.$
	\end{proof}
We study the behavior of the sets $\tau \mathbf{P}^i\mathbf{g}^j \tau e_0^3$ for $i\ge 0$ and $j\ge 0$. 

\begin{thm}\label{thm:ij}The set $\tau \mathbf{P}^i \mathbf{g}^j e_0^3$ contains an element divisible by $\tau$ if
	\begin{itemize}
		\item $i\ge 0$ and $j=0$, or
		\item $i\ge j\ge 1$, or
		\item $1\le i < j \le 3i$.
		\end{itemize}
	\end{thm}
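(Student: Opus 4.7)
The plan is to handle the three cases of the hypothesis separately; in each I would exhibit a concrete product in $\mathrm{Ext}$ — built from $\tau$, a lift of an Adams periodicity image when needed, and powers of $d_0$ and $e_0$ — that is manifestly divisible by $\tau$ and whose $\phi$-image agrees with $\tau P^i g^j e_0^3$ in $\mathrm{Ext}_{\mathbf{A}(2)}$. The two algebraic identities driving the whole computation are
\[ d_0^2 = P g + h_1^3 \Delta h_1 \qquad \text{and} \qquad e_0^2 = d_0 g, \]
used respectively inside Lemma~\ref{Lemma:Pg} and as Lemma~\ref{gd}. Applying the first identity repeatedly produces an error term that is a multiple of $h_1^3 \Delta h_1$, and the final input needed is that these errors are annihilated by $\tau$ in $\mathrm{Ext}_{\mathbf{A}(2)}$ — the same $\tau$-torsion input already used implicitly inside the proof of Lemma~\ref{Lemma:tPg}, where the identification $\tau P^{i-1} d_0^2 = \tau P^i g$ silently relies on $\tau P^{i-1} h_1^3 \Delta h_1 = 0$.

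When $j = 0$, Lemma~\ref{Lemma:P-ope} provides $\alpha \in \mathbf{P}^i e_0$, and $\tau \alpha e_0^2$ is $\tau$-divisible with $\phi(\tau \alpha e_0^2) = \tau P^i e_0^3$, so it lies in $\tau \mathbf{P}^i e_0^3$. When $i \geq j \geq 1$, I would again use Lemma~\ref{Lemma:P-ope} to pick $\alpha \in \mathbf{P}^{i-j} d_0$ and consider the candidate $\tau \alpha d_0^{2j-1} e_0^3$; its image under $\phi$ is $\tau P^{i-j} d_0^{2j} e_0^3$, and binomially expanding $d_0^{2j} = (Pg + h_1^3 \Delta h_1)^j$ turns this into $\tau P^i g^j e_0^3$ plus an $h_1^3 \Delta h_1$-divisible error which is killed by $\tau$.

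When $1 \leq i < j \leq 3i$, no periodicity lift is needed and I would use the purely multiplicative candidate $\tau d_0^{3i - j} e_0^{2(j - i) + 3}$, whose exponents are non-negative exactly because of the range on $j$. Applying $e_0^2 = d_0 g$ repeatedly in $\mathrm{Ext}_{\mathbf{A}(2)}$ rewrites its image as
\[ d_0^{3i-j} e_0^{2(j-i)+3} = d_0^{3i-j} (d_0 g)^{j-i+1} e_0 = d_0^{2i+1} g^{j - i + 1} e_0 = d_0 \cdot (d_0^2)^i \cdot g^{j - i + 1} e_0, \]
and then one more substitution $d_0^2 = Pg + h_1^3 \Delta h_1$ together with $d_0 g = e_0^2$ converts this into $P^i g^{j+1} d_0 e_0 = P^i g^j e_0^3$ modulo an $h_1^3 \Delta h_1$-divisible error. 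The upper bound $j \leq 3i$ is precisely what permits the absorption of all the extra $g$'s into $e_0^2 = d_0 g$ factors.

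The main obstacle, uniform across the last two cases, is the verification that the $h_1^3 \Delta h_1$-divisible error terms actually vanish after multiplication by $\tau$ in $\mathrm{Ext}_{\mathbf{A}(2)}$. This is a concrete bookkeeping task inside the $\tau$-torsion submodule of $\mathrm{Ext}_{\mathbf{A}(2)}$, and I expect it to follow from Isaksen's complete description \cite{I-A2} in the same way that Lemma~\ref{Lemma:tPg} already invokes it tacitly. Once that verification is in place, the three explicit candidates above are the $\tau$-divisible elements required by the theorem.
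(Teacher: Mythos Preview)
Your proposal is correct and follows essentially the same approach as the paper: the paper exhibits exactly the candidates $\tau\cdot P^{i}e_0^3$, $\tau\cdot d_0^{2j}\cdot P^{i-j}e_0^3$, and $\tau\cdot d_0^{3i-j}\cdot e_0^{2(j-i)+3}$ in the three cases, citing Example~\ref{Pgt=d} and Lemma~\ref{gd} in place of your explicit binomial expansion. The ``obstacle'' you flag is not a genuine difficulty: the single relation $\tau h_1^3\Delta h_1=0$ in $\mathrm{Ext}_{\mathbf{A}(2)}$ (which is precisely the content of Example~\ref{Pgt=d}, i.e.\ $\phi(\tau d_0^2)=\tau Pg$) kills every cross term in $\tau(Pg+h_1^3\Delta h_1)^j$ at once, so no further bookkeeping in the $\tau$-torsion submodule is required.
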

\begin{proof}When $i=0$ and $j=0$, the element $\tau e_0^3$ is divisible by $\tau$. When $i\ge 1$ and $j=0$, the set $\tau \mathbf{P}^ie_0^3$ contains the element $\tau \cdot P^ie_0^3$ which is divisible by $\tau.$ 
	Apply Example \ref{Pgt=d} and Lemma \ref{gd} to get:
	\begin{itemize}\item When $i\ge j \ge 1$, the set $\tau \mathbf{P}^i\mathbf{g}^j e_0^3$ contains the element $\tau \cdot d_0^{2j} \cdot P^{i-j}e_0^3$ which is divisible by $\tau$.
		\item When $1\le i < j \le 3i$, the set $\tau \mathbf{P}^i\mathbf{g}^j  e_0^3$ contains the element $\tau \cdot d_0^{3i-j} \cdot e_0^{2(j-i)+3}$ which is divisible by $\tau.$
	\end{itemize}
	\end{proof}
There are unknown cases from Theorem \ref{thm:ij}. When $i=0$ and $j\ge 1$, the set $\tau e_0^3 \mathbf{g}^j $ is not known fully and will be of our interest. 
When $3i<j$, the set $\mathbf{P}^i\mathbf{g}^j \tau e_0^3$ contains the set $\tau e_0^{4i+3}\mathbf{g}^{j-3i}$ which is not known fully and will be of our interest.

We apply the same argument for the sets $\mathbf{P}^i\mathbf{g}^j \tau \Delta h_1e_0$, $\mathbf{P}^i\mathbf{g}^j \tau \Delta h_1 d_0 e_0$, $\mathbf{P}^i\mathbf{g}^j \tau e_0^2 g$ and $\mathbf{P}^i\mathbf{g}^j \tau \Delta h_1 e_0^2$ for $i\ge 0$ and $j\ge 0$ to observe that we need to study the behavior of the families of sets $\Delta h_1 e_0^t \mathbf{g}^j $ and $e_0^t \mathbf{g}^j $ for $i \ge 0$, $j \ge 0$ and $t\ge 1$.

\begin{remark}Since $\Delta h_1$ is not an element of Ext, $\Delta h_1 \mathbf{g}^j$ is not defined in Ext. Therefore, we do not consider the set $\Delta h_1 e_0^t \mathbf{g}^j$ when $t=0$.
	\end{remark}
\subsection{The $e_0^t\mathbf{g}^k$ family}


\begin{lemma}
	\label{lem:h1-local}
	If $e_0^t\mathbf{g}^k$ is non-empty, then $e_0^t\mathbf{g}^k$ consists of elements which are non-zero in the $h_1$-localization $\mathrm{Ext}[h_1^{-1}]$.
\end{lemma}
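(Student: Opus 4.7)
The plan is to transport the question to the polynomial algebra $\mathrm{Ext}_{\mathbf{A}(2)}[h_1^{-1}] \cong \mathbb{F}_2[h_1^{\pm 1}, a_1, v_1^4, v_2]$, where non-vanishing can be read off from the presence of a non-zero monomial. Let $x$ be any element of $e_0^t\mathbf{g}^k$; by Definition \ref{def:Pg} we have $\phi(x) = e_0^t g^k$ in $\mathrm{Ext}_{\mathbf{A}(2)}$, and the commutative square relating $\phi$ and $L$ gives
\[
\phi(L(x)) = L(\phi(x)) = L(e_0^t g^k)
\]
in $\mathrm{Ext}_{\mathbf{A}(2)}[h_1^{-1}]$. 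So it is enough to show that $L(e_0^t g^k)$ is non-zero in the polynomial algebra, since a non-zero element of $\mathrm{Ext}_{\mathbf{A}(2)}[h_1^{-1}]$ cannot be hit by a zero element of $\mathrm{Ext}[h_1^{-1}]$.

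The core step is to identify $L(e_0)$ and $L(g)$ in $\mathrm{Ext}_{\mathbf{A}(2)}[h_1^{-1}]$. I would push the known values $L(e_0) = h_1^3 v_3$ and $L(e_0 g) = h_1^7 v_4$ from Table \ref{L-map} across $\phi$, using the formula $\phi(v_n) = h_1^{-3(2^{n-2}-1)} a_1^{2^{n-2}-1} v_2$ recalled in the proof of Lemma \ref{local2}. With $n=3$ this gives $\phi(L(e_0)) = v_2 a_1$, hence $L(e_0) = v_2 a_1$ by commutativity of the square; with $n=4$ it gives $\phi(L(e_0 g)) = h_1^{-2} v_2 a_1^3$, hence $L(e_0 g) = h_1^{-2} v_2 a_1^3$, and dividing by $L(e_0)$ in the polynomial algebra pins down $L(g) = h_1^{-2} a_1^2$. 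By multiplicativity,
\[
L(e_0^t g^k) = (v_2 a_1)^t (h_1^{-2} a_1^2)^k = h_1^{-2k} v_2^t a_1^{t+2k},
\]
which is a non-zero monomial in $\mathbb{F}_2[h_1^{\pm 1}, a_1, v_1^4, v_2]$. Chasing back through the diagram then gives $L(x) \ne 0$.

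The only step that really requires care is the two identifications $L(e_0) = v_2 a_1$ and $L(g) = h_1^{-2} a_1^2$; beyond that the argument is pure polynomial bookkeeping. I do not anticipate a serious obstacle here, since the relevant values of $\phi(v_n)$ and the entries of Table \ref{L-map} are both directly available, and the polynomial algebra has no hidden zero divisors among monomials.
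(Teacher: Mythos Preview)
Your argument is correct and lands at the same conclusion as the paper, but the paper's own proof is shorter and stays one level up from the localization. The paper simply observes that for any $x \in e_0^t\mathbf{g}^k$ and any $n \ge 0$ one has $\phi(h_1^n x) = h_1^n e_0^t g^k$, which is non-zero in $\mathrm{Ext}_{\mathbf{A}(2)}$ by the known structure of that ring \cite{I-A2}; hence $h_1^n x \ne 0$ in $\mathrm{Ext}$ for all $n$, which is exactly the statement that $x$ survives $h_1$-localization. No explicit identification of $L(e_0)$ or $L(g)$ is needed.

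Your route instead passes to $\mathrm{Ext}_{\mathbf{A}(2)}[h_1^{-1}]$ and computes the monomial $L(e_0^t g^k) = h_1^{-2k} v_2^t a_1^{t+2k}$ directly. This is more work, but it is self-contained (you do not need to invoke the full relations of $\mathrm{Ext}_{\mathbf{A}(2)}$) and it yields the explicit localized image, which is exactly the input one wants for the next step, Proposition~\ref{prop:eg-empty}. So the two approaches trade brevity against explicitness; both rest on the same idea of detecting non-vanishing through $\phi$.
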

\begin{proof}For any element $x$ in $e_0^t\mathbf{g}^k$ and any non-negative integer $n$, we have $\phi (h_1^n x)=h_1^n e_0^tg^k$ which is non-zero in Ext$_{\mathbf{A}(2)}$ \cite{I-A2}. Consequently, $h_1^n x$ is non-zero in Ext. In other words, $x$ is non-zero in the $h_1$-localization Ext$_{\mathbf{A}}[h_1^{-1}]$. \end{proof}

\begin{prop}\label{prop:eg-empty}Let $t$ and $k$ be non-negative integers. If $\alpha (t+k) >t $, then $e_0^t \mathbf{g}^k$ is empty.
\end{prop}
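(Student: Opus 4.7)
The plan is to argue by contradiction using the $h_1$-localization square and Lemma \ref{local2}. Suppose $x \in e_0^t \mathbf{g}^k$. By Lemma \ref{lem:h1-local}, $L(x)$ is non-zero in $\mathrm{Ext}[h_1^{-1}]$, and commutativity of the localization/$\phi$ square gives
$$\phi(L(x)) = L(\phi(x)) = L(e_0^t g^k) \in \mathrm{Ext}_{\mathbf{A}(2)}[h_1^{-1}].$$
So $L(e_0^t g^k)$ must lie in the image of $\phi: \mathrm{Ext}[h_1^{-1}] \to \mathrm{Ext}_{\mathbf{A}(2)}[h_1^{-1}]$. The goal is to show this image condition fails when $\alpha(t+k) > t$.

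The second step is to compute $L(e_0^t g^k)$ explicitly in $\mathrm{Ext}_{\mathbf{A}(2)}[h_1^{-1}] \cong \mathbb{F}_2[h_1^{\pm 1}, a_1, v_1^4, v_2]$. From Table \ref{L-map} applied at $k=0$, $L(e_0) = h_1^3 v_3$ and $L(e_0 g) = h_1^7 v_4$ in $\mathrm{Ext}[h_1^{-1}]$. Using the formula $\phi(v_n) = h_1^{-3(2^{n-2}-1)} a_1^{2^{n-2}-1} v_2$ from the proof of Lemma \ref{local2}, I obtain $\phi(v_3) = h_1^{-3} a_1 v_2$ and $\phi(v_4) = h_1^{-9} a_1^3 v_2$, so in the localized $\mathbf{A}(2)$-cohomology
$$L(e_0) = a_1 v_2, \qquad L(e_0 g) = h_1^{-2} a_1^3 v_2, \qquad L(g) = h_1^{-2} a_1^2.$$
Consequently $L(e_0^t g^k) = h_1^{-2k} a_1^{t+2k} v_2^{t}$.

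The third step applies Lemma \ref{local2} to the monomial $h_1^d v_1^{4a} v_2^b a_1^c$ with $d = -2k$, $a = 0$, $b = t$, $c = t + 2k$. It is in the image of $\phi$ if and only if $\alpha(b+c) \le b$, i.e.\ $\alpha(2(t+k)) \le t$. By the third part of Lemma \ref{lem:binary}, $\alpha(2(t+k)) = \alpha(t+k)$, so the image condition is $\alpha(t+k) \le t$. Under the hypothesis $\alpha(t+k) > t$ this fails, contradicting the conclusion of the first step; hence no such $x$ exists and $e_0^t \mathbf{g}^k$ is empty.

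The one delicate point is justifying $L(g) = h_1^{-2} a_1^2$, which is not in Table \ref{L-map} directly: since $a_1, v_2$ are polynomial generators of $\mathrm{Ext}_{\mathbf{A}(2)}[h_1^{-1}]$, the factorization $L(e_0 g) = L(e_0) \cdot L(g)$ uniquely determines $L(g)$ once $L(e_0)$ and $L(e_0 g)$ are computed. Everything else is a direct application of the results cited earlier in the excerpt, so the main content is really packaging the binary-digit condition in Lemma \ref{local2} into the clean hypothesis $\alpha(t+k) > t$.
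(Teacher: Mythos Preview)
Your proof is correct and takes a genuinely different route from the paper's. The paper works entirely in $\mathrm{Ext}[h_1^{-1}]$: it uses the Chow degree of $e_0^t\mathbf{g}^k$ (which is $t$) to constrain the shape of any monomial $v_1^{4n}v_2^m\prod v_{m_i}$ appearing in $L(x)$, then compares coweights to extract an arithmetic identity from which $\alpha(t+k)\le t$ follows via Lemma~\ref{lem:binary}. You instead push everything into $\mathrm{Ext}_{\mathbf{A}(2)}[h_1^{-1}]$, compute the single monomial $L(e_0^tg^k)=h_1^{-2k}a_1^{t+2k}v_2^{t}$ there, and invoke Lemma~\ref{local2} directly. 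Your approach is closer in spirit to how the paper itself proves Proposition~\ref{dheg}, and it has the virtue of making the binary-digit condition appear immediately from Lemma~\ref{local2} rather than from an ad~hoc coweight computation; the paper's argument, on the other hand, avoids computing $L(g)$ in $\mathrm{Ext}_{\mathbf{A}(2)}[h_1^{-1}]$ and does not need Lemma~\ref{local2} at all.

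One small point on the step you flagged as delicate: your factorization $L(e_0g)=L(e_0)\cdot L(g)$ in $\mathrm{Ext}_{\mathbf{A}(2)}[h_1^{-1}]$ implicitly uses that the Ext element named $e_0g$ satisfies $\phi(e_0g)=e_0\cdot g$ in $\mathrm{Ext}_{\mathbf{A}(2)}$, which the paper does not state explicitly. This is true and standard, but if you want to avoid relying on it you can determine $L(g)$ in $\mathrm{Ext}_{\mathbf{A}(2)}[h_1^{-1}]$ directly: a degree count shows $h_1^{-2}a_1^2$ is the unique monomial in degree $(20,4,12)$, and $L(g)\ne 0$ since $h_1^n g\ne 0$ in $\mathrm{Ext}_{\mathbf{A}(2)}$ for all $n$ (the same fact used in Lemma~\ref{lem:h1-local}).
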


\begin{proof}(Via contradiction) Suppose that $e_0^{t}\mathbf{g}^k$ is non-empty. As a result, its elements survive the $h_1$-localization by Lemma \ref{lem:h1-local}. Note that elements of $e_0^{t}\mathbf{g}^k$ have Chow degree $t$ and coweight $(7t+8k)$. By Theorem \ref{thm:h1-local}, after considering Chow degrees, any element of $e_0^{t}\mathbf{g}^k$ maps to a summation of monomials of the form  $$v_1^{4n} v_2^m \prod_{i=1}^{t-4n-m}v_{m_i}$$ in Ext$_{\mathbf{A}}[h_1^{-1}]$ for some $n, m$ and $m_i\ge 3$. By comparing coweights, we have
	$$7t+8k = 4n +3m + \sum_{i=1}^{t-4n-m} (2^{m_i}-1).$$
	Then $$8t+8k = 8n + 4m + \sum_{i=1}^{t-4n-m} 2^{m_i}.$$
	Since $m_i \ge 3$, $m$ has to be even, i.e., $m=2m'$ for some non-negative integer $m'$.
	We obtain $$t+k=n+m'+\sum_{i=1}^{t-4n-m}2^{m_i-3}.$$ By Lemma \ref{lem:binary}, $$\alpha (t+k) \le \alpha (n) + \alpha (m') + t - 4n - m = t + (\alpha (n) - 4n) + (\alpha (m') - 2m') \le t.$$
\end{proof}

\begin{cor}\label{cor:empty}If $e_0 \mathbf{g}^k$ is non-empty, then $k=2^n-1$ for some non-negative integer $n$.
\end{cor}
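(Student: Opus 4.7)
The plan is to deduce this corollary as an immediate consequence of Proposition \ref{prop:eg-empty} by specializing to $t=1$. Taking the contrapositive of the proposition in the case $t=1$, if $e_0 \mathbf{g}^k$ is non-empty then $\alpha(1+k) \le 1$.

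Next I would translate the inequality $\alpha(1+k) \le 1$ into the stated conclusion. Since $\alpha$ counts the number of $1$'s in the binary expansion of a non-negative integer, and $1+k \ge 1 > 0$, the value $\alpha(1+k)$ is at least $1$; hence $\alpha(1+k) = 1$. A non-negative integer with exactly one $1$ in its binary expansion is precisely a power of $2$, so $1+k = 2^n$ for some integer $n \ge 0$, which rearranges to $k = 2^n - 1$.

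There is no substantial obstacle: the proof is essentially a one-line specialization. The only point meriting a brief verification is the trivial fact that $\alpha(m) \ge 1$ for $m \ge 1$, ensuring the inequality $\alpha(1+k) \le 1$ is actually an equality and thus forces the power-of-two conclusion rather than allowing $1+k=0$.
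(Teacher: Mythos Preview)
Your proof is correct and follows essentially the same approach as the paper: apply Proposition~\ref{prop:eg-empty} with $t=1$ to obtain $\alpha(1+k)\le 1$, then conclude $1+k=2^n$. You add the minor explicit observation that $\alpha(1+k)\ge 1$ since $1+k\ge 1$, which the paper leaves implicit.
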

\begin{proof}Since $e_0 \mathbf{g}^k$ is non-empty, $\alpha (1+k) \le 1$ by Proposition \ref{prop:eg-empty}. Then $1+k=2^n$ for some non-negative integer $n$.   
\end{proof}
We state the following conjecture.

\begin{conj}\label{conj:eg}The set $e_0\mathbf{g}^k$ is non-empty if and only if $k=2^n-1$ for some non-negative integer $n$.
\end{conj}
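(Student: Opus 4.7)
The ``only if'' direction is already Corollary \ref{cor:empty}, so the task reduces to producing, for each $n \geq 0$, an element $x_n \in \mathrm{Ext}$ with $\phi(x_n) = e_0 g^{2^n - 1}$ in $\mathrm{Ext}_{\mathbf{A}(2)}$. The cases $n = 0, 1, 2$ are base cases handled by the explicit low-dimensional computations in \cite{I-stem}, so the entire content lies in the inductive step.

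My first step is to pin down the expected $h_1$-localization. The element $e_0 g^{2^n - 1}$ lies in Chow degree $1$ and coweight $2^{n+3} - 1$. Among monomials in the polynomial generators of Theorem \ref{thm:h1-local}, only $h_1^a v_{n+3}$ can occur at this Chow degree and coweight, and matching filtration forces $a = 4 \cdot 2^n - 1$, giving
\[
L(e_0 g^{2^n - 1}) = h_1^{4 \cdot 2^n - 1}\, v_{n+3} \quad \text{in } \mathrm{Ext}[h_1^{-1}].
\]
This is consistent with the values $L(e_0) = h_1^3 v_3$ and $L(e_0 g) = h_1^7 v_4$ in Table \ref{L-map}. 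Using $\phi(v_{n+3}) = h_1^{-3(2^{n+1}-1)} a_1^{2^{n+1}-1} v_2$ from \cite{G-I-local,GKIR}, the image of this candidate in $\mathrm{Ext}_{\mathbf{A}(2)}[h_1^{-1}]$ works out to $h_1^{2(1-2^n)} a_1^{2^{n+1}-1} v_2$, which agrees with $L(e_0 g^{2^n - 1})$ computed directly in $\mathrm{Ext}_{\mathbf{A}(2)}$; Lemma \ref{local2} then confirms (with $b = 1$ and $c = 2^{n+1} - 1$, so $\alpha(b+c) = 1 \leq b$) that this monomial lies in the image of $\phi$ on the localized level.

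The remaining, and hardest, step is producing an integral lift $x_n \in \mathrm{Ext}$ whose $h_1$-localization equals the class above. The commutative square of Section 2 only yields a class in $\mathrm{Ext}[h_1^{-1}]$, because $L$ has non-trivial kernel. I would attempt two parallel approaches. First, a Massey product approach, mirroring Lemmas \ref{lemma:h2g}--\ref{Lemma:tg}: since $h_2 \cdot \phi(x_n) = h_2 e_0 g^{2^n - 1}$ is detected in $\mathrm{Ext}_{\mathbf{A}(2)}$, a natural candidate is the bracket $\langle h_2, h_1, h_1^4 \tilde{x}_{2^n - 1} e_0 \rangle$, and one must verify that it is defined, has the correct image under $\phi$, and is non-zero modulo indeterminacy. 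Second, an approach using the motivic modular forms spectrum $\mathrm{mmf}$ of \cite{GKIR}: its Adams $E_2$-page is $\mathrm{Ext}_{\mathbf{A}(2)}$ and its unit map induces $\phi$, so explicit $v_n$-periodic families in $\pi_\ast \mathrm{mmf}$ would provide the required lifts. The main obstacle is that lifting along $L$ is, in a precise sense, equivalent to the conjecture itself, so no purely formal manipulation of the data in Section 2 can close the gap; honest progress likely requires either new high-filtration information about $\mathrm{Ext}$, or an explicit spectral or geometric realization of the classes $h_1^{4 \cdot 2^n - 1} v_{n+3}$ by actual elements of $\mathrm{Ext}$.
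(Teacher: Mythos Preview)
The statement you are attempting to prove is labeled as a \emph{conjecture} in the paper (Conjecture~\ref{conj:eg}), and the paper does not claim to prove it. The paper supplies exactly the evidence you identify: the ``only if'' direction is Corollary~\ref{cor:empty}, and the ``if'' direction is verified only for $n=0,1,2$ via the explicit computations showing $e_0$, $e_0\mathbf{g}$, and $e_0\mathbf{g}^3$ are non-empty. Beyond that the paper merely remarks that the conjecture ``fits nicely with the properties of the $h_1$-localization of Ext,'' which is precisely the compatibility you spell out in your second paragraph.

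Your proposal is therefore not a proof, and you recognize this yourself in the final paragraph: the lifting problem from $\mathrm{Ext}[h_1^{-1}]$ back to $\mathrm{Ext}$ is the entire content of the conjecture, and neither the Massey-product heuristic nor the $\mathrm{mmf}$ suggestion is carried through. This is an honest assessment of the situation and matches the paper's own stance. There is no ``paper's proof'' to compare against; your write-up is best read as an expanded discussion of the evidence and the obstruction, going somewhat further than the paper in making the $h_1$-local target $h_1^{4\cdot 2^n - 1} v_{n+3}$ explicit, but not resolving the open problem.
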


We mention some evidence supporting the conjecture. The elements $e_0g$ and $e_0g^3$ survive in Ext (by explicit computations). Also, the conjecture fits nicely with the properties of the $h_1$-localization of Ext \cite{G-I-local}.

\begin{thm}\label{eg}Suppose that $e_0\mathbf{g}^{2^n-1}$ is non-empty for every non-negative integer $n$. Then $e_0^t\mathbf{g}^k$ is non-empty if and only if $k = (\sum_{i=1}^{t} 2^{n_i}) - t $ for some non-negative integers $n_i$.
\end{thm}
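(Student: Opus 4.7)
The plan is to prove the two implications separately, using Proposition~\ref{prop:eg-empty} for the nontrivial direction and the product inclusion $\mathbf{P}^i\mathbf{g}^j x \cdot \mathbf{P}^a\mathbf{g}^b y \subseteq \mathbf{P}^{i+a}\mathbf{g}^{j+b}(xy)$ (recorded in a remark in Section~3) for the easy direction, together with a short combinatorial observation about binary expansions.

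For the \emph{if} direction, suppose $k = \sum_{i=1}^{t} 2^{n_i} - t = \sum_{i=1}^{t}(2^{n_i}-1)$ with $n_i \ge 0$. Specializing the product inclusion to trivial $\mathbf{P}$-parts and iterating over $i$ gives
\[
e_0^t\,\mathbf{g}^k \;\supseteq\; e_0\mathbf{g}^{2^{n_1}-1} \cdot e_0\mathbf{g}^{2^{n_2}-1} \cdots e_0\mathbf{g}^{2^{n_t}-1}.
\]
Each factor on the right is non-empty: for $n_i \ge 1$ this is the standing hypothesis of the theorem, while for $n_i = 0$ the set $e_0 \mathbf{g}^0 = \{x : \phi(x) = \phi(e_0)\}$ visibly contains $e_0$. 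Choosing representatives in Ext and multiplying produces an element of $e_0^t\mathbf{g}^k$, so that set is non-empty.

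For the \emph{only if} direction, assume $e_0^t\mathbf{g}^k$ is non-empty. Proposition~\ref{prop:eg-empty} then forces $\alpha(t+k) \le t$, and $k \ge 0$ trivially gives $t+k \ge t$. It therefore suffices to prove the purely combinatorial statement that any integer $N$ satisfying $\alpha(N) \le t \le N$ can be written as a sum of exactly $t$ powers of $2$ with non-negative exponents (repetitions allowed). Starting from the binary expansion of $N$, which uses $\alpha(N)$ distinct powers of $2$, we apply replacements of the form $2^a \mapsto 2^{a-1} + 2^{a-1}$ for $a \ge 1$; each such move preserves the total and increases the number of summands by exactly one, and the procedure halts when every summand equals $2^0$, at which point there are $N$ summands. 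Hence every count between $\alpha(N)$ and $N$ is attainable, in particular $t$, producing the required decomposition $t+k = \sum_{i=1}^t 2^{n_i}$.

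The theorem reduces to book-keeping on top of Proposition~\ref{prop:eg-empty} and the multiplicativity of the $\mathbf{g}$-operation, so I do not anticipate a substantial obstacle. The only point requiring care is remembering to allow $n_i = 0$ on the constructive side, since otherwise the combinatorial decomposition can fail in small cases (for example $t=2$, $k=0$, where the only valid decomposition is $2 = 2^0 + 2^0$ even though $e_0^2 \in e_0^2\mathbf{g}^0$).
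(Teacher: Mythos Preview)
Your proof is correct and follows essentially the same approach as the paper: the ``if'' direction uses the product inclusion to contain $e_0^t\mathbf{g}^k$ over a product of the hypothesized non-empty sets $e_0\mathbf{g}^{2^{n_i}-1}$, and the ``only if'' direction invokes Proposition~\ref{prop:eg-empty} to get $\alpha(t+k)\le t$ and then converts this into the required decomposition. Your combinatorial justification via the splitting $2^a\mapsto 2^{a-1}+2^{a-1}$ is exactly the move the paper already used in the proof of Lemma~\ref{local2}, so the paper simply asserts the equivalence (see also Remark~\ref{rem:eg}); your separate handling of $n_i=0$ is harmless but unnecessary, since the hypothesis is stated for all non-negative $n$.
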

\begin{proof}If $e_0^t\mathbf{g}^k$ is non-empty, then by Proposition \ref{prop:eg-empty} we have $\alpha(k+t)\le t$. As a result, $k + t = \sum_{i=1}^{t} 2^{n_i}$ for some non-negative integers $n_i$. In other words,  $$k = (\sum_{i=1}^{t} 2^{n_i}) - t .$$ 
	
	Conversely, if $k = (\sum_{i=1}^{t} 2^{n_i}) - t $, since $e_0\mathbf{g}^{2^{n_i}-1}$ is non-empty for all $n_i$ then $$e_0^t\mathbf{g}^k \supseteq e_0\mathbf{g}^{2^{n_1}-1}  \cdots  e_0\mathbf{g}^{2^{n_t}-1}$$ is non-empty.
\end{proof}

\begin{remark}\label{rem:eg}The condition $k = (\sum_{i=1}^{t} 2^{n_i}) - t $ is equivalent to $\alpha(k+t) \le t$. In practice, we use the latter condition rather than the former one. 
	\end{remark}

\subsection{The $\Delta h_1e_0^t\mathbf{g}^k$ family}

\begin{prop}{\label{dheg}}If $\alpha (1+k+t)>t$ for $t\ge 1$ and $k\ge 0$, then the set $\Delta h_1e_0^t\mathbf{g}^k$ is empty. 
\end{prop}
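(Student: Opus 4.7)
The plan is to argue by contradiction, adapting the strategy of Proposition \ref{prop:eg-empty} but supplementing the polynomial-algebra bookkeeping of Theorem \ref{thm:h1-local} with the finer image constraint of Lemma \ref{local2}. Suppose $x \in \Delta h_1 e_0^t \mathbf{g}^k$; then $\phi(x) = \Delta h_1 e_0^t g^k$ in $\mathrm{Ext}_{\mathbf{A}(2)}$. A direct mimicking of Proposition \ref{prop:eg-empty} using only the Chow degree ($4+t$) and coweight ($12+7t+8k$) of $\Delta h_1 e_0^t g^k$ would yield at best $\alpha(t+k+2) \leq t+4$, which is strictly weaker than what is wanted; the refinement to $\alpha(b+c) \leq b$ coming from Lemma \ref{local2} is precisely what produces the $+1$ improvement in the hypothesis.

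First I would verify that $x$ survives $h_1$-localization. Since $\mathrm{Ext}_{\mathbf{A}(2)}[h_1^{-1}] \cong \mathbb{F}_2[h_1^{\pm 1}, a_1, v_1^4, v_2]$ is a polynomial algebra with $L(e_0) = v_2 a_1$ and $L(g) = h_1^{-2} a_1^2$ (both forced by their tridegrees), it suffices to show $L(\Delta h_1) \neq 0$; then $L(\phi(x)) = L(\Delta h_1) \cdot L(e_0)^t L(g)^k \neq 0$, so $h_1^n x \neq 0$ in $\mathrm{Ext}$ for every $n$, and $L(x)$ is a nonzero element of $\mathrm{Ext}[h_1^{-1}]$. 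By commutativity of the localization square, $\phi(L(x)) = L(\Delta h_1 e_0^t g^k)$ sits in the image of $\phi: \mathrm{Ext}[h_1^{-1}] \to \mathrm{Ext}_{\mathbf{A}(2)}[h_1^{-1}]$.

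Next I would write out $L(\Delta h_1 e_0^t g^k)$ explicitly. The tridegree $(25,5,13)$ of $\Delta h_1$ admits exactly two basis monomials in the polynomial algebra, namely $h_1 v_2^4$ and $h_1^{-5} v_1^4 a_1^2$; taking the identification $L(\Delta h_1) = h_1 v_2^4 + h_1^{-5} v_1^4 a_1^2$ from \cite{I-A2} and \cite{G-I-local}, a short multiplication gives
$$L(\Delta h_1 e_0^t g^k) \;=\; h_1^{1-2k}\, v_2^{t+4}\, a_1^{t+2k} \;+\; h_1^{-5-2k}\, v_1^4\, v_2^t\, a_1^{t+2k+2}.$$
Since this sum lies in the image of $\phi$, Lemma \ref{local2} forces each basis summand to satisfy $\alpha(b+c) \leq b$; applied to the second term, where $b = t$ and $c = t+2k+2$, this gives $\alpha(t+k+1) = \alpha(2t+2k+2) = \alpha(b+c) \leq b = t$, contradicting the hypothesis $\alpha(1+k+t) > t$.

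The step I expect to be the main obstacle is identifying $L(\Delta h_1)$, and in particular ensuring that the $h_1^{-5} v_1^4 a_1^2$ summand really is present: the other basis monomial $h_1 v_2^4$ at this tridegree only contributes the much weaker constraint $\alpha(t+k+2) \leq t+4$ when carried through the same multiplication. I would handle this either by direct appeal to the May spectral sequence computation of $\mathrm{Ext}_{\mathbf{A}(2)}$ in \cite{I-A2}, which pins down the $h_1$-local form of $\Delta h_1$, or by deducing it from the standard $h_1$-local identification $L(\Delta) = v_2^4 + h_1^{-6} v_1^4 a_1^2$ via the relation $L(\Delta h_1) = h_1 \cdot L(\Delta)$.
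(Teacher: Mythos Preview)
Your proposal is correct and follows essentially the same route as the paper: suppose $x\in\Delta h_1 e_0^t\mathbf{g}^k$, pass to the $h_1$-localization via the commutative square, compute $L(\Delta h_1 e_0^t g^k)=h_1^{-2k+1}v_2^{t+4}a_1^{t+2k}+h_1^{-2k-5}v_1^4 v_2^t a_1^{t+2k+2}$ in $\mathrm{Ext}_{\mathbf{A}(2)}[h_1^{-1}]$, and observe that the second summand fails the image criterion $\alpha(b+c)\le b$ of Lemma \ref{local2} when $\alpha(1+k+t)>t$. Your writeup is in fact a bit more explicit than the paper's about why a single offending summand suffices and about the identification of $L(\Delta h_1)$, but the argument is the same.
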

\begin{proof}(Via contradiction) We recall the following commutative diagram \cite{G-I-local}
	\[
	\begin{tikzcd}
	\mathrm{Ext}  \ar{r}{\phi} \arrow{d}{L} & 
	\mathrm{Ext}_{\mathbf{A}(2)} \arrow{d}{L}  \\
	\mathrm{Ext}[h_1^{-1}] \ar{r}{\phi} & \mathrm{Ext}_{\mathbf{A}(2)}[h_1^{-1}] 
	\end{tikzcd}
	\] 
	Suppose that $\Delta h_1e_0^t\mathbf{g}^k$ is non-empty. Then it contains an element $x$. The element $x$ maps to the element $\Delta h_1e_0^tg^k$ in Ext$_{\mathbf{A}(2)}$, surviving $h_1$-localization. The element $\Delta h_1e_0^tg^k$ maps to 
	$$h_1^{-2k-5} v_1^4 a_1^{2+2k+t} v_2^t + h_1^{-2k+1} v_2^{4+t} a_1^{2k+t}$$ in Ext$_{\mathbf{A}(2)}[h_1^{-1}]$ via $L$.
	
	Since $\alpha (1+k+t)>t$, the term $h_1^{-2k-5} v_1^4 a_1^{2+2k+t} v_2^t$ is not in the image of $\phi:$ Ext$[h_1^{-1}] \longrightarrow$ Ext$_{\mathbf{A}(2)}[h_1^{-1}]$  by Lemma \ref{local2}.
\end{proof}

\begin{lemma}For any integer $k\ge 0$, there is no element $x$ in $\mathrm{Ext}$ such that $\phi(x) = \Delta h_1 g^k$ in $\mathrm{Ext}_{\mathbf{A}(2)}$.
	\end{lemma}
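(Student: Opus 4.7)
The plan is to adapt the $h_1$-localization argument used in Proposition~\ref{dheg} to the case $t = 0$. First I would suppose for contradiction that some $x$ in Ext satisfies $\phi(x) = \Delta h_1 g^k$ and apply the localization map $L: \mathrm{Ext}_{\mathbf{A}(2)} \to \mathrm{Ext}_{\mathbf{A}(2)}[h_1^{-1}]$. By the commutative square relating the two copies of $\phi$, the element $L(\Delta h_1 g^k)$ would then have to lie in the image of $\phi: \mathrm{Ext}[h_1^{-1}] \to \mathrm{Ext}_{\mathbf{A}(2)}[h_1^{-1}]$.

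Next I would invoke the formula for $L(\Delta h_1 e_0^t g^k)$ that appeared in the proof of Proposition~\ref{dheg}. Specializing to $t = 0$ gives
$$L(\Delta h_1 g^k) = h_1^{-2k-5} v_1^4 a_1^{2+2k} + h_1^{-2k+1} v_2^4 a_1^{2k}$$
inside the polynomial ring $\mathbb{F}_2[h_1^{\pm 1}, a_1, v_1^4, v_2]$. Then I would apply Lemma~\ref{local2}: the image of $\phi$ on the $h_1$-localized rings is spanned by monomials $h_1^d v_1^{4a} v_2^b a_1^c$ with $\alpha(b+c) \le b$. Since the codomain is a polynomial ring in algebraically independent generators, membership in this submodule may be checked one monomial summand at a time. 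The summand $h_1^{-2k-5} v_1^4 a_1^{2+2k}$ has $b = 0$ and $c = 2k + 2$, so the condition reduces to $\alpha(2k+2) \le 0$, which fails because $2k + 2 \ge 2$. Hence $L(\Delta h_1 g^k)$ is not in the image of $\phi$, contradicting the existence of $x$.

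The main obstacle is simply justifying the formula for $L(\Delta h_1 g^k)$; everything else is a direct application of Lemma~\ref{local2}. This formula is a computation in the May spectral sequence for $h_1$-localized $\mathrm{Ext}_{\mathbf{A}(2)}$, and it is the $t = 0$ special case of the formula already invoked in the proof of Proposition~\ref{dheg}, so I would quote the same reference rather than redo the calculation. In effect, the result says that the obstruction identified by Proposition~\ref{dheg} persists even when there is no $e_0$ factor to absorb powers of $v_2$.
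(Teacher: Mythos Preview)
Your proposal is correct and is precisely the argument the paper intends: the paper's own proof reads ``We apply the same argument as in Proposition~\ref{dheg},'' and you have spelled out exactly that argument in the special case $t=0$, where the hypothesis $\alpha(1+k+t)>t$ becomes $\alpha(1+k)>0$ and hence holds for every $k\ge 0$.
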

\begin{proof}We apply the same argument as in Proposition \ref{dheg}.
	\end{proof}

 By Proposition \ref{dheg}, a necessary condition for the set $\Delta h_1e_0\mathbf{g}^j$ to be non-empty is $\alpha (2+k)\le 1$, or $k=2^n-2$ for some non-negative integer $n$. Unfortunately, we do not know if it is sufficient. We state the following conjecture.
\begin{conj}\label{delta-eg}The set $\Delta h_1e_0\mathbf{g}^j$ is non-empty if and only if $k=2^n-2$ for some non-negative integer $n$. 
\end{conj}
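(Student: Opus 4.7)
My plan is to split the biconditional into two directions. The only-if direction is immediate from Proposition \ref{dheg} applied with $t = 1$: non-emptiness of $\Delta h_1 e_0 \mathbf{g}^k$ forces $\alpha(2 + k) \le 1$, hence $k + 2$ is a power of $2$, so $k = 2^n - 2$ for some integer $n \ge 1$.

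The if direction is the substantive one. I would proceed by explicit construction. The base case $n = 1$, $k = 0$ is handled by the element $\Delta h_1 e_0 \in \mathrm{Ext}$ already recorded in Table \ref{opt}. For $n \ge 2$, the natural starting point is Lemma \ref{Lemma:tDhg}, which yields a non-empty $\tau \Delta h_1 \mathbf{g}^{k+1}$ and hence, after multiplication by $e_0$, a non-empty $\tau \Delta h_1 e_0 \mathbf{g}^k$. The task is then to remove the factor of $\tau$, that is, to improve the motivic weight by one. My approach would use the $\mathrm{Ext}_{\mathbf{A}(2)}$ identity $d_0^2 = P g + h_1^3 \Delta h_1$ already central to Lemma \ref{Lemma:Pg}: rewriting $h_1^3 \Delta h_1 e_0 g^{k} = (d_0^2 - P g) e_0 g^{k}$, one tries to lift the right-hand side to an element $y \in \mathrm{Ext}$ and then argue that $y$ is $h_1^3$-divisible, writing $y = h_1^3 x$; injectivity of multiplication by $h_1^3$ on the relevant degrees (valid because the target is detected by $h_1$-localization) would then deliver $\phi(x) = \Delta h_1 e_0 g^k$. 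If successful, this reduces the non-emptiness to that of sets of the form $e_0 \mathbf{g}^{k}$ and $P e_0 \mathbf{g}^{k+1}$, both of which are instances of the wedge framework via Conjecture \ref{conj:eg}.

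The main obstacle is the positive-direction control of the image of the localized map $\phi : \mathrm{Ext}[h_1^{-1}] \to \mathrm{Ext}_{\mathbf{A}(2)}[h_1^{-1}]$, and in particular producing the lift at the correct motivic weight. Applying $L$ to the target $\Delta h_1 e_0 g^k$ yields the sum of monomials already computed (for general $t$) in the proof of Proposition \ref{dheg}; by Lemma \ref{local2}, each summand lies in the image of $\phi$ precisely when $\alpha(2 + k) \le 1$, i.e.\ exactly in the cases predicted by the conjecture. This shows the localization-level obstruction vanishes in the right range, so there is no local barrier to constructing the lift. Converting this localized consistency into a $\tau$-indivisible element in unlocalized $\mathrm{Ext}$ is the hard step; it seems to require either extending the motivic May spectral sequence computation indefinitely in $n$, or identifying a uniform Massey product construction, in the style of Lemma \ref{lemma:h2g}, that detects $\Delta h_1 e_0 g^{2^n - 2}$ under $\phi$ at the optimal weight for all $n$ simultaneously. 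Short of such a uniform construction, the conjecture can at least be verified for each small $n$ by direct appeal to the explicit computations of \cite{I-stem}.
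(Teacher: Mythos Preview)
The statement you are attempting to prove is labeled a \emph{conjecture} in the paper, and the paper provides no proof. Immediately before stating it, the paper derives the only-if direction from Proposition~\ref{dheg} exactly as you do, and then writes: ``Unfortunately, we do not know if it is sufficient.'' So your only-if argument matches the paper's, and your if direction is addressing a question the paper explicitly leaves open.

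Your sketch for the if direction is a reasonable line of attack, and you are right that the localization-level obstruction of Lemma~\ref{local2} vanishes precisely when $k = 2^n - 2$. But as you yourself concede, the hard step---promoting $h_1$-localized consistency to an actual $\tau$-free element of unlocalized $\mathrm{Ext}$---is not carried out; the localization map is far from injective, so the absence of a local obstruction does not by itself produce a lift. Moreover, the reduction you outline to non-emptiness of $e_0 \mathbf{g}^{k}$ and $\mathbf{P} e_0 \mathbf{g}^{k+1}$ would, even if the $h_1^3$-divisibility step could be justified, rest on Conjecture~\ref{conj:eg}, which is likewise open in the paper. In short, your proposal does not close the gap, and this is entirely consistent with the paper's position: the if direction remains a genuine conjecture.
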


\begin{thm}\label{thm:delta-eg}Suppose that $e_0\mathbf{g}^{2^n-1}$ and $\Delta h_1e_0\mathbf{g}^{2^n-2}$ are non-empty for every non-negative integer $n$. Then $\Delta h_1 e_0^t\mathbf{g}^k$ is non-empty if and only if $k = (\sum_{i=1}^{t} 2^{n_i}) - t -1$ for some non-negative integers $n_i$.
\end{thm}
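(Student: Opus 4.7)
The plan is to mirror the structure of Theorem \ref{eg}, using Proposition \ref{dheg} in place of Proposition \ref{prop:eg-empty} and combining the two base-case hypotheses multiplicatively.

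For the forward implication, suppose $\Delta h_1 e_0^t \mathbf{g}^k$ is non-empty. The contrapositive of Proposition \ref{dheg} yields $\alpha(1+k+t) \le t$. Set $N = 1+k+t$; since $k \ge 0$ we have $t \le N$. Starting from the binary expansion $N = \sum_{j=1}^{\alpha(N)} 2^{a_j}$ and repeatedly splitting a summand $2^a$ with $a \ge 1$ into $2^{a-1}+2^{a-1}$, we can increase the number of summands by one at each step, continuing until all summands equal $1$ (giving $N$ summands total). Since $\alpha(N) \le t \le N$, we can stop at exactly $t$ summands, producing $N = \sum_{i=1}^{t} 2^{n_i}$ with $n_i \ge 0$, equivalently $k = \bigl(\sum_{i=1}^{t} 2^{n_i}\bigr) - t - 1$.

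For the converse, write $k = \bigl(\sum_{i=1}^{t} 2^{n_i}\bigr) - t - 1$ with non-negative $n_i$. If every $n_i$ were zero we would get $k = -1$, contradicting $k \ge 0$, so at least one exponent is positive; after permuting indices assume $n_1 \ge 1$, which makes $2^{n_1} - 2 \ge 0$. Then
$$\Delta h_1 e_0^t \mathbf{g}^k \supseteq \Delta h_1 e_0 \mathbf{g}^{2^{n_1}-2} \cdot \prod_{i=2}^{t} e_0 \mathbf{g}^{2^{n_i}-1},$$
by the product containment $\mathbf{g}^a x \cdot \mathbf{g}^b y \subseteq \mathbf{g}^{a+b}(xy)$ noted in the earlier remark, together with the arithmetic identity $(2^{n_1}-2) + \sum_{i=2}^{t}(2^{n_i}-1) = k$. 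Each factor on the right is non-empty by hypothesis, hence so is the product, and therefore $\Delta h_1 e_0^t \mathbf{g}^k$ is non-empty.

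The theorem itself is purely formal once Proposition \ref{dheg} and the combinatorial manipulation of $\alpha$ are in hand; the real obstacle, which this theorem explicitly bypasses, is verifying the assumed non-emptiness of $e_0 \mathbf{g}^{2^n-1}$ and $\Delta h_1 e_0 \mathbf{g}^{2^n-2}$ for all $n$. These are the substantive content of Conjectures \ref{conj:eg} and \ref{delta-eg} and lie outside the scope of the current techniques.
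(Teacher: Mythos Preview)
Your proof is correct and follows the same strategy as the paper's: Proposition \ref{dheg} for the forward direction, and a product decomposition using the assumed base cases for the converse. You are in fact more careful than the paper in two places---spelling out the splitting argument that converts $\alpha(1+k+t)\le t$ into a sum of exactly $t$ powers of $2$, and observing that at least one $n_i$ must be $\ge 1$ so that the exponent $2^{n_1}-2$ is non-negative, a point the paper's own proof leaves implicit.
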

\begin{proof}If $\Delta h_1 e_0^t\mathbf{g}^k$ is non-empty, then $\alpha (1+k+t)\le t$ or $k = (\sum_{i=1}^{t} 2^{n_i}) - t -1$ for some non-negative integers $n_i$.
	
	Conversely, if $k = (\sum_{i=1}^{t} 2^{n_i}) - t -1$ for some non-negative integers $n_i$, then $\Delta h_1 e_0^t\mathbf{g}^k$ contains the set
	$$\Delta h_1 e_0 \mathbf{g}^{2^{n_{i_1}}-2} e_0 \mathbf{g}^{2^{n_{i_2}}-1} \cdot \ldots e_0  \mathbf{g}^{2^{n_{i_t}}-1}$$ which is non-empty.
\end{proof}

\subsection{The wedge at filtrations $f=4k$ and $f=4k+1$ for $k\ge 2$}
At filtrations $f=4k+2$ and $f=4k+3$ for $k\ge 2$, the wedge is known completely. At filtrations $f=4k$ and $f=4k+1$ for $k\ge 2$, the wedge remains not optimal. If Conjectures \ref{conj:eg} and \ref{delta-eg} are correct, we can completely solve the problem of optimizing the wedge mentioned at the beginning of section \ref{optimize}.

The wedge contains the following elements and sets (the order is of increasing stems) at filtrations $f=4k$ for $k\ge 3$ 
	$$\tau \cdot P^{k-3}d_0 e_0^2, \tau \cdot P^{k-3} e_0^3, \tau \cdot P^{k-4} e_0^{2} d_0^2, \tau \cdot P^{k-4} e_0^{3} d_0, \tau \cdot P^{k-4} e_0^{4}, \ldots,  $$
	$$\tau \cdot e_0^{k-2}d_0^2, \tau  \cdot e_0^{k-1}d_0, e_0^k, \tau e_0^{k-1}\mathbf{g}, \ldots, \tau e_0 \mathbf{g}^{k-1}, \tau \mathbf{g}^k.$$

 \begin{thm}
 Suppose that $e_0\mathbf{g}^{2^n-1}$ is non-empty for every non-negative integer $n$. Then at filtration $f=4k$ for $k\ge 2$ the set $\tau e_0^s \mathbf{g}^{k-s}$ contains an element divisible by $\tau$ if $s\ge \alpha(k)$ and does not contain any element divisible by $\tau$ if $s < \alpha (k)$.
 \end{thm}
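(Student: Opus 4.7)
The plan is to reduce to Theorem~\ref{eg}. Applying that theorem with $t = s$ and second index $k - s$, together with Remark~\ref{rem:eg}, gives that $e_0^s \mathbf{g}^{k-s}$ is non-empty precisely when $\alpha((k-s) + s) = \alpha(k) \le s$. So it suffices to show that $\tau e_0^s \mathbf{g}^{k-s}$ contains a $\tau$-divisible element if and only if $e_0^s \mathbf{g}^{k-s}$ is non-empty.

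The ``if'' direction is immediate: any $y \in e_0^s \mathbf{g}^{k-s}$ yields a $\tau$-divisible element $\tau y \in \tau e_0^s \mathbf{g}^{k-s}$. For the converse, suppose $x = \tau y$ lies in $\tau e_0^s \mathbf{g}^{k-s}$, so that $\tau\phi(y) = \tau e_0^s g^{k-s}$ in $\mathrm{Ext}_{\mathbf{A}(2)}$. First I would compute, within the polynomial algebra $\mathrm{Ext}_{\mathbf{A}(2)}[h_1^{-1}] \cong \mathbb{F}_2[h_1^{\pm 1}, a_1, v_1^4, v_2]$, that $L(e_0) = a_1 v_2$ and $L(g) = h_1^{-2} a_1^2$ (these being the unique monomials of the appropriate tridegrees), yielding
\[ L(e_0^s g^{k-s}) = h_1^{-2(k-s)} a_1^{2k-s} v_2^s. \]
By Lemma~\ref{local2} this monomial lies in the image of $\phi : \mathrm{Ext}[h_1^{-1}] \to \mathrm{Ext}_{\mathbf{A}(2)}[h_1^{-1}]$ precisely when $\alpha(2k) = \alpha(k) \le s$, which fails under the hypothesis $s < \alpha(k)$. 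The plan is then to argue from $\tau\phi(y) = \tau e_0^s g^{k-s}$ that $\phi(L(y)) = L(e_0^s g^{k-s})$ in the localization, a direct contradiction with Lemma~\ref{local2}.

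The main obstacle is precisely this last step, since $\tau\phi(y) = \tau e_0^s g^{k-s}$ determines $\phi(y)$ only modulo $\tau$-torsion. Writing $\phi(y) = e_0^s g^{k-s} + z$ with $\tau z = 0$, I need $L(z) = 0$, i.e., that $z$ is $h_1$-nilpotent in $\mathrm{Ext}_{\mathbf{A}(2)}$. This I would verify by a tridegree-wise inspection at $(20k - 3s,\, 4k,\, 12k - 2s)$ using the explicit description of $\mathrm{Ext}_{\mathbf{A}(2)}$ from~\cite{I-A2}: at this tridegree, the $\tau$-torsion classes are $h_1$-nilpotent, so $L(z) = 0$ and the commutative square of Section~2 closes the contradiction.
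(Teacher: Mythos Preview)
Your proof is correct. The ``if'' direction ($s \ge \alpha(k)$) matches the paper exactly. For the direction $s < \alpha(k)$, however, you take a longer route than the paper. The paper simply cancels $\tau$ in $\tau\phi(y) = \tau e_0^s g^{k-s}$ to obtain $\phi(y) = e_0^s g^{k-s}$, so that $y \in e_0^s \mathbf{g}^{k-s}$, and then invokes Proposition~\ref{prop:eg-empty} directly (not Theorem~\ref{eg}) to conclude that this set is empty when $s < \alpha(k)$. You instead pass to the $h_1$-localization of $\mathrm{Ext}_{\mathbf{A}(2)}$ and appeal to Lemma~\ref{local2}, which amounts to reproving the relevant piece of Proposition~\ref{prop:eg-empty} in a different guise. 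Both arguments must confront the $\tau$-torsion ambiguity in $\phi(y)$; the paper leaves the $\tau$-cancellation implicit, whereas you isolate it explicitly and need only the weaker claim that the $\tau$-torsion at this tridegree is $h_1$-nilpotent. Your route is thus more careful about this point but more roundabout; the paper's is shorter because it reuses Proposition~\ref{prop:eg-empty} as a black box.
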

\begin{proof}If $s\ge \alpha(k)$, by Theorem \ref{eg} and Remark \ref{rem:eg} the set $e_0^s \mathbf{g}^{k-s}$ contains an element $x$. Then $\tau e_0^s \mathbf{g}^{k-s}$ contains the element $\tau \cdot x$ divisible by $\tau$.
	
	If $s< \alpha (k)$, we suppose that $\tau e_0^s \mathbf{g}^{k-s}$ contains an element $\tau \cdot y$ divisible by $\tau$. The element $\tau \cdot y$ maps to $\tau e_0^s g^{k-s}$ in Ext$_{\mathbf{A}(2)}$. Then $y$ maps to $e_0^s g^{k-s}$ in Ext$_{\mathbf{A}(2)}$. In other words, $y$ is an element of the set $e_0^s \mathbf{g}^{k-s}$. However, since $s< \alpha (k)$, the set $e_0^s \mathbf{g}^{k-s}$ is empty by Proposition \ref{prop:eg-empty}.
	\end{proof}

The wedge contains the following elements and sets (the order is of increasing stems) at the filtration $f=4k+1$ for $k\ge 2$
$$\tau \cdot P^{k-2}\Delta h_1 e_0, \tau \cdot P^{k-3} d_0 \Delta h_1 d_0, \tau \cdot P^{k-3} d_0 \Delta h_1 e_0, \tau \cdot P^{k-3} \Delta h_1 e_0^2, \ldots $$
$$\tau \cdot d_0^2 \Delta h_1 e_0^{k-3}, \tau \cdot d_0 \Delta h_1 e_0^{k-2}, \tau \cdot \Delta h_1 e_0^{k-1}, \tau \Delta h_1 e_0^{k-2} \mathbf{g}, \ldots, \tau \Delta h_1 e_0 \mathbf{g}^{k-2}, \tau \Delta h_1 \mathbf{g}^{k-1}. $$

\begin{thm}
Suppose that $e_0\mathbf{g}^{2^n-1}$ and $\Delta h_1e_0\mathbf{g}^{2^n-2}$ are non-empty for every non-negative integer $n$. Then at filtration $f=4k+1$ for $k\ge 2$ the set $\tau \Delta h_1 e_0^{s} \mathbf{g}^{k-s-1}$ contains an element divisible by $\tau$ if $s\ge \alpha(k)$ and does not contain any element divisible by $\tau$ if $s< \alpha(k)$.
\end{thm}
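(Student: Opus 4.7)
The plan is to mirror the proof of the immediately preceding theorem (the filtration $f=4k$ case) almost verbatim, replacing Theorem~\ref{eg} by Theorem~\ref{thm:delta-eg} and Proposition~\ref{prop:eg-empty} by Proposition~\ref{dheg}. The hypothesis supplied is precisely what Theorem~\ref{thm:delta-eg} requires, so no new input is needed. The only arithmetic to verify is that with the parameter identification ``$e_0$-exponent $=s$, $\mathbf{g}$-exponent $=k-s-1$,'' the conditions $\alpha(1+t+j)\le t$ and $\alpha(1+t+j)>t$ appearing in Theorem~\ref{thm:delta-eg} and Proposition~\ref{dheg} simplify to $\alpha(k)\le s$ and $\alpha(k)>s$ respectively, matching the dichotomy in the statement.

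For the forward direction, suppose $s\ge\alpha(k)$. Applying Theorem~\ref{thm:delta-eg} with the identification above, and using the supplied non-emptiness of $e_0\mathbf{g}^{2^n-1}$ and $\Delta h_1 e_0\mathbf{g}^{2^n-2}$, produces an element $x$ in $\Delta h_1 e_0^s\mathbf{g}^{k-s-1}$. Then $\tau\cdot x$ lies in $\tau\Delta h_1 e_0^s\mathbf{g}^{k-s-1}$ by inspection of $\phi$, and is visibly divisible by $\tau$.

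For the reverse direction, suppose $s<\alpha(k)$ and that some $\tau\cdot y$ with $y$ in Ext lies in $\tau\Delta h_1 e_0^s\mathbf{g}^{k-s-1}$. Unwinding Definition~\ref{def:Pg} gives $\phi(\tau\cdot y)=\tau\Delta h_1 e_0^s g^{k-s-1}$ in Ext$_{\mathbf{A}(2)}$, and since $\phi$ is $\mathbb{M}_2$-linear this reads $\tau\cdot\phi(y)=\tau\Delta h_1 e_0^s g^{k-s-1}$. Cancelling $\tau$, exactly as in the preceding theorem's proof, yields $\phi(y)=\Delta h_1 e_0^s g^{k-s-1}$, so $y$ belongs to $\Delta h_1 e_0^s\mathbf{g}^{k-s-1}$. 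But Proposition~\ref{dheg} makes this set empty when $\alpha(k)>s$, a contradiction.

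The only subtlety, shared with the preceding theorem, is the $\tau$-cancellation step: it requires that multiplication by $\tau$ be injective on the class $\Delta h_1 e_0^s g^{k-s-1}$ in Ext$_{\mathbf{A}(2)}$, which is visible from the explicit description of Ext$_{\mathbf{A}(2)}$ in \cite{I-A2}. Once this is granted, the argument reduces to combinatorial bookkeeping with the function $\alpha$ and the two supplied non-emptiness hypotheses.
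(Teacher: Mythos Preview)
Your proof is correct and follows essentially the same approach as the paper's own proof: apply Theorem~\ref{thm:delta-eg} for the forward direction to produce $x$ and then $\tau\cdot x$, and for the reverse direction peel off $\tau$ from $\phi(\tau\cdot y)$ to place $y$ in $\Delta h_1 e_0^s\mathbf{g}^{k-s-1}$, which Proposition~\ref{dheg} declares empty. Your explicit remark on the $\tau$-cancellation step (injectivity of $\tau$ on the relevant class in $\mathrm{Ext}_{\mathbf{A}(2)}$) is a detail the paper leaves implicit but is indeed needed.
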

\begin{proof}If $s\ge \alpha(k)$, then by Theorem \ref{thm:delta-eg} the set $\Delta h_1 e_0^{s} \mathbf{g}^{k-s-1}$ contains an element $x$. Then $\tau \Delta h_1 e_0^{s} \mathbf{g}^{k-s-1}$ contains the element $\tau \cdot x$ divisible by $\tau$.
	
	If $s< \alpha(k)$, we suppose that $\tau \Delta h_1 e_0^{s} \mathbf{g}^{k-s-1}$ contains an element $\tau \cdot y$ divisible by $\tau$. The element $\tau \cdot y$ maps to $\tau \Delta h_1 e_0^s g^{k-s-1}$ in Ext$_{\mathbf{A}(2)}$. Then the element $y$ maps to $\Delta h_1 e_0^s g^{k-s-1}$ in Ext$_{\mathbf{A}(2)}$. In other words, $y$ is an element of the set $\Delta h_1 e_0^s \mathbf{g}^{k-s-1}$. However, since $s< \alpha (k)$, the set $\Delta h_1 e_0^s \mathbf{g}^{k-s-1}$ is empty by Proposition \ref{dheg}.
	\end{proof}

\subsection{The wedge chart}
This chart shows the wedge from its vertex to stem 70.
\begin{itemize}
	\item All dots indicate copies of $\mathbb{M}_2$.
	\item Red dots indicate elements which behave irregularly, as in Propositions \ref{prop:eg-empty} and \ref{dheg}. 
	\end{itemize} 

\begin{landscape}
\newrgbcolor{cldotcolor}{0.5 0.5 0.5}
\newrgbcolor{clhzerocolor}{0.5 0.5 0.5}
\newrgbcolor{clhonecolor}{0.5 0.5 0.5}
\newrgbcolor{clhtwocolor}{0.5 0.5 0.5}

\newrgbcolor{Ctauhiddenhzerocolor}{0 0 1}
\newrgbcolor{Ctauhiddenhonecolor}{0 0 1}
\newrgbcolor{Ctauhiddenhtwocolor}{0 0 1}

\newrgbcolor{taubottomcolor}{0.5 0.5 0.5}
\newrgbcolor{tautopcolor}{1 0 0}
\newrgbcolor{taubottomhidcolor}{0.2 0.8 0.2}
\newrgbcolor{tautophidcolor}{1 0.6 0.2}
\newrgbcolor{tauunknowncolor}{0.7 0.1 0.8}

\newrgbcolor{tauzerocolor}{0.5 0.5 0.5}
\newrgbcolor{tauonecolor}{1 0 0}
\newrgbcolor{tautwocolor}{0.3 0.3 1}
\newrgbcolor{tauthreecolor}{0 0.7 0}
\newrgbcolor{taufourcolor}{0.7 0.1 0.8}
\newrgbcolor{taufivecolor}{0.7 0.1 0.8}
\newrgbcolor{tausixcolor}{0.7 0.1 0.8}

\newrgbcolor{hzerotaucolor}{1 0 1}
\newrgbcolor{hzeromoretaucolor}{1 0.5 0}
\newrgbcolor{hzerotowercolor}{0.5 0.5 0.5}

\newrgbcolor{honetaucolor}{1 0 1}
\newrgbcolor{honemoretaucolor}{1 0.5 0}
\newrgbcolor{honetowercolor}{1 0 0}

\newrgbcolor{htwotaucolor}{1 0 1}
\newrgbcolor{htwomoretaucolor}{1 0.5 0}

\newrgbcolor{dtwocolor}{0 0.7 0.7}
\newrgbcolor{dtwotaucolor}{1 0 1}
\newrgbcolor{dtwomoretaucolor}{1 0.5 0}
\newrgbcolor{dthreecolor}{1 0 0}
\newrgbcolor{dfourcolor}{0.1 0.7 0.1}
\newrgbcolor{dfivecolor}{0.2 0.2 0.7}
\newrgbcolor{dsixcolor}{1 0.5 0}
\newrgbcolor{dsevencolor}{1 0.5 0}
\newrgbcolor{deightcolor}{1 0.5 0}

\newrgbcolor{tauextncolor}{0 0.6 0} 
\newrgbcolor{twoextncolor}{0.7 0.7 0}
\newrgbcolor{etaextncolor}{0.5 0 1}
\newrgbcolor{nuextncolor}{0.6 0.3 0}

\newgray{gridline}{0.8}
\newgray{unknowncolor}{0.95}

\newcommand{\cirrad}{0.06}
\newcommand{\dotseplength}{0.05}

\newpsobject{tauextn}{psline}{linecolor=tauextncolor}
\newpsobject{tauextncurve}{psbezier}{linecolor=tauextncolor}
\newpsobject{twoextn}{psline}{linecolor=twoextncolor}
\newpsobject{twoextncurve}{psbezier}{linecolor=twoextncolor}
\newpsobject{etaextn}{psline}{linecolor=etaextncolor}
\newpsobject{nuextn}{psline}{linecolor=nuextncolor}
\newpsobject{nuextncurve}{psbezier}{linecolor=nuextncolor}
\newpsobject{Ctautwoextncurve}{psbezier}{linecolor=Ctauhiddenhzerocolor}
\newpsobject{Ctauetaextncurve}{psbezier}{linecolor=Ctauhiddenhonecolor}
\newpsobject{Ctaunuextncurve}{psbezier}{linecolor=Ctauhiddenhtwocolor}

\newcommand{\D}{\Delta}

\newcommand{\myindent}{1ex}

\psset{unit=0.71cm}

\psset{unit=0.65cm}
\renewcommand{\cirrad}{0.1}
%
 \begin{figure}
\begin{center}
\begin{pspicture}(39,7)(70,25)
\tiny
\psgrid[unit=2,gridcolor=gridline,subgriddiv=0,gridlabelcolor=white](20,4)(35,12)

\rput(40,7){40}
\rput(42,7){42}
\rput(44,7){44}
\rput(46,7){46}
\rput(48,7){48}
\rput(50,7){50}
\rput(52,7){52}
\rput(54,7){54}
\rput(56,7){56}
\rput(58,7){58}
\rput(60,7){60}
\rput(62,7){62}
\rput(64,7){64}
\rput(66,7){66}
\rput(68,7){68}
\rput(70,7){70}

\rput(-1,0){0}
\rput(-1,2){2}
\rput(-1,4){4}
\rput(-1,6){6}
\rput(39,8){8}
\rput(39,10){10}
\rput(39,12){12}
\rput(39,14){14}
\rput(39,16){16}
\rput(39,18){18}
\rput(39,20){20}
\rput(39,22){22}
\rput(39,24){24}
\rput(-1,26){26}
\rput(-1,28){28}
\rput(-1,30){30}
\rput(-1,32){32}
\rput(-1,34){34}
\rput(-1,36){36}
\rput(-1,38){38}
\rput(-1,40){40}
\rput(-1,42){42}
\rput(-1,44){44}
\rput(-1,46){46}
\rput(-1,48){48}
\rput(-1,50){50}
\rput(-1,52){52}
\rput(-1,54){54}
\rput(-1,56){56}

\pscircle*[linecolor=tauonecolor](40.00,8){\cirrad}
\uput{\cirrad}[-90](40.00,8){$\tau g^2$}
\pscircle*[linecolor=tauzerocolor](42.00,9){\cirrad}
\uput{\cirrad}[-90](42.00,9){$ \D h_1 e_0$}
\pscircle*[linecolor=tauzerocolor](44.00,10){\cirrad}
\uput{\cirrad}[-90](44.00,10){$d_0 r$}
\pscircle*[linecolor=tauonecolor](45.00,9){\cirrad}
\uput{\cirrad}[-90](45.00,9){$\tau \D h_1 g $}
\pscircle*[linecolor=tauzerocolor](46.00,11){\cirrad}
\uput{\cirrad}[150](46.00,11){$d_0 l$}
\pscircle*[linecolor=tauzerocolor](47.00,10){\cirrad}
\uput{\cirrad}[-90](47.00,10){$e_0 r$}
\pscircle*[linecolor=tauzerocolor](48.00,12){\cirrad}
\uput{\cirrad}[-90](48.00,12){$ d_0 e_0^2$}
\pscircle*[linecolor=tauzerocolor](49.00,11){\cirrad}
\uput{\cirrad}[150](49.00,11){$d_0 m$}
\pscircle*[linecolor=tauzerocolor](50.00,10){\cirrad}
\uput{\cirrad}[-90](50.00,10){$gr$}
\pscircle*[linecolor=tauzerocolor](50.00,13){\cirrad}
\uput{\cirrad}[-90](50.00,13){$ P \D h_1 e_0$}
\pscircle*[linecolor=tauzerocolor](51.00,12){\cirrad}
\uput{\cirrad}[-90](51.00,12){$ e_0^3$}
\pscircle*[linecolor=tauzerocolor](52.00,11){\cirrad}
\uput{\cirrad}[-90](52.00,11){$e_0 m$}
\pscircle*[linecolor=tauzerocolor](52.00,14){\cirrad}
\uput{\cirrad}[-90](52.00,14){$P d_0 r$}
\pscircle*[linecolor=tauzerocolor](53.00,13){\cirrad}
\uput{\cirrad}[-90](53.00,13){$\Delta h_1 d_0^2  $}
\pscircle*[linecolor=tauzerocolor](54.00,12){\cirrad}
\uput{\cirrad}[-90](54.00,12){$ e_0^2 g$}
\pscircle*[linecolor=tauzerocolor](54.00,15){\cirrad}
\uput{\cirrad}[150](54.00,15){$P d_0 l$}
\pscircle*[linecolor=tauzerocolor](55.00,11){\cirrad}
\uput{\cirrad}[-90](55.00,11){$gm$}
\pscircle*[linecolor=tauzerocolor](55.00,14){\cirrad}
\uput{\cirrad}[-90](55.00,14){$P e_0 r$}
\pscircle*[linecolor=tauzerocolor](56.00,13){\cirrad}
\uput{\cirrad}[-60](56.00,13){$ \D h_1 d_0 e_0$}
\pscircle*[linecolor=tauzerocolor](56.00,16){\cirrad}
\uput{\cirrad}[-90](56.00,16){$ P d_0 e_0^2$}
\pscircle*[linecolor=tauonecolor](57.00,12){\cirrad}
\uput{\cirrad}[-90](57.00,12){$\tau e_0 g^2$}
\pscircle*[linecolor=tauzerocolor](57.00,15){\cirrad}
\uput{\cirrad}[150](57.00,15){$P d_0 m$}
\pscircle*[linecolor=tauzerocolor](58.00,14){\cirrad}
\uput{\cirrad}[-90](58.00,14){$d_0^2 r$}
\pscircle*[linecolor=tauzerocolor](58.00,17){\cirrad}
\uput{\cirrad}[-90](58.00,17){$ P^2 \D h_1 e_0$}
\pscircle*[linecolor=tauzerocolor](59.00,13){\cirrad}
\uput{\cirrad}[-90](59.00,13){$ \D h_1 e_0^2$}
\pscircle*[linecolor=tauzerocolor](59.00,16){\cirrad}
\uput{\cirrad}[-90](59.00,16){$ P e_0^3$}
\pscircle*[linecolor=tauonecolor](60.00,12){\cirrad}
\uput{\cirrad}[-90](60.00,12){$\tau g^3$}
\pscircle*[linecolor=tauzerocolor](60.00,15){\cirrad}
\uput{\cirrad}[150](60.00,15){$P e_0 m$}
\pscircle*[linecolor=tauzerocolor](60.00,18){\cirrad}
\uput{\cirrad}[-90](60.00,18){$P^2 d_0 r$}
\pscircle*[linecolor=tauzerocolor](61.00,14){\cirrad}
\uput{\cirrad}[-90](61.00,14){$d_0 e_0 r$}
\pscircle*[linecolor=tauzerocolor](61.00,17){\cirrad}
\uput{\cirrad}[-90](61.00,17){$ P \Delta h_1  d_0^2 $}
\pscircle*[linecolor=tauonecolor](62.00,13){\cirrad}
\uput{\cirrad}[-90](62.00,13){$\tau \D h_1 e_0 g$}
\pscircle*[linecolor=tauzerocolor](62.00,16){\cirrad}
\uput{\cirrad}[-120](62.00,16){$ P e_0^2 g$}
\pscircle*[linecolor=tauzerocolor](62.00,19){\cirrad}
\uput{\cirrad}[150](62.00,19){$P^2 d_0 l$}
\pscircle*[linecolor=tauzerocolor](63.00,15){\cirrad}
\uput{\cirrad}[150](63.00,15){$d_0^2 m$}
\pscircle*[linecolor=tauzerocolor](63.00,18){\cirrad}
\uput{\cirrad}[-120](63.00,18){$P^2 e_0 r$}
\pscircle*[linecolor=tauzerocolor](64.00,14){\cirrad}
\uput{\cirrad}[90](64.00,14){$d_0 g r$}
\pscircle*[linecolor=tauzerocolor](64.00,17){\cirrad}
\uput{\cirrad}[-90](64.00,17){$ P \D h_1 d_0 e_0$}
\pscircle*[linecolor=tauzerocolor](64.00,20){\cirrad}
\uput{\cirrad}[-90](64.00,20){$ P^2 d_0 e_0^2$}
\pscircle*[linecolor=tauonecolor](65.00,13){\cirrad}
\uput{\cirrad}[90](65.00,13){$\tau \D h_1 g^2$}
\pscircle*[linecolor=tauzerocolor](65.00,16){\cirrad}
\uput{\cirrad}[-90](65.00,16){$ d_0^2 e_0g$}
\pscircle*[linecolor=tauzerocolor](65.00,19){\cirrad}
\uput{\cirrad}[150](65.00,19){$P^2 d_0 m$}
\pscircle*[linecolor=tauzerocolor](66.00,15){\cirrad}
\uput{\cirrad}[150](66.00,15){$d_0 e_0 m$}
\pscircle*[linecolor=tauzerocolor](66.00,18){\cirrad}
\uput{\cirrad}[-90](66.00,18){$P d_0^2 r$}
\pscircle*[linecolor=tauzerocolor](66.00,21){\cirrad}
\uput{\cirrad}[-90](66.00,21){$ P^3 \D h_1 e_0$}
\pscircle*[linecolor=tauzerocolor](67.00,14){\cirrad}
\uput{\cirrad}[90](67.00,14){$e_0 g r$}
\pscircle*[linecolor=tauzerocolor](67.00,17){\cirrad}
\uput{\cirrad}[-90](67.00,17){$ P \D h_1 e_0^2$}
\pscircle*[linecolor=tauzerocolor](67.00,20){\cirrad}
\uput{\cirrad}[-90](67.00,20){$ P^2 e_0^3$}
\pscircle*[linecolor=tauzerocolor](68.00,16){\cirrad}
\uput{\cirrad}[-90](68.00,16){$ e_0^4$}
\pscircle*[linecolor=tauzerocolor](68.00,19){\cirrad}
\uput{\cirrad}[150](68.00,19){$P^2 e_0 m$}
\pscircle*[linecolor=tauzerocolor](68.00,22){\cirrad}
\uput{\cirrad}[-90](68.00,22){$P^3 d_0 r$}
\pscircle*[linecolor=tauzerocolor](69.00,15){\cirrad}
\uput{\cirrad}[-90](69.00,15){$e_0^2 m$}
\pscircle*[linecolor=tauzerocolor](69.00,18){\cirrad}
\uput{\cirrad}[-90](69.00,18){$P d_0 e_0 r$}
\pscircle*[linecolor=tauzerocolor](69.00,21){\cirrad}
\uput{\cirrad}[-90](69.00,21){$ P^2 \D h_1 d_0^2$}
\pscircle*[linecolor=tauzerocolor](70.00,14){\cirrad}
\uput{\cirrad}[0](70.00,14){$m^2$}
\pscircle*[linecolor=tauzerocolor](70.00,17){\cirrad}
\uput{\cirrad}[-90](70.00,17){$ \D h_1 d_0^2 e_0 $}
\pscircle*[linecolor=tauzerocolor](70.00,20){\cirrad}
\uput{\cirrad}[-90](70.00,20){$ P^2 e_0^2 g$}
\pscircle*[linecolor=tauzerocolor](70.00,23){\cirrad}
\uput{\cirrad}[150](70.00,23){$P^3 d_0 l$}

\end{pspicture}

\end{center}
\caption{The $\mathbb{C}$-motivic wedge through the 70-stem}
\label{figure}
\end{figure}
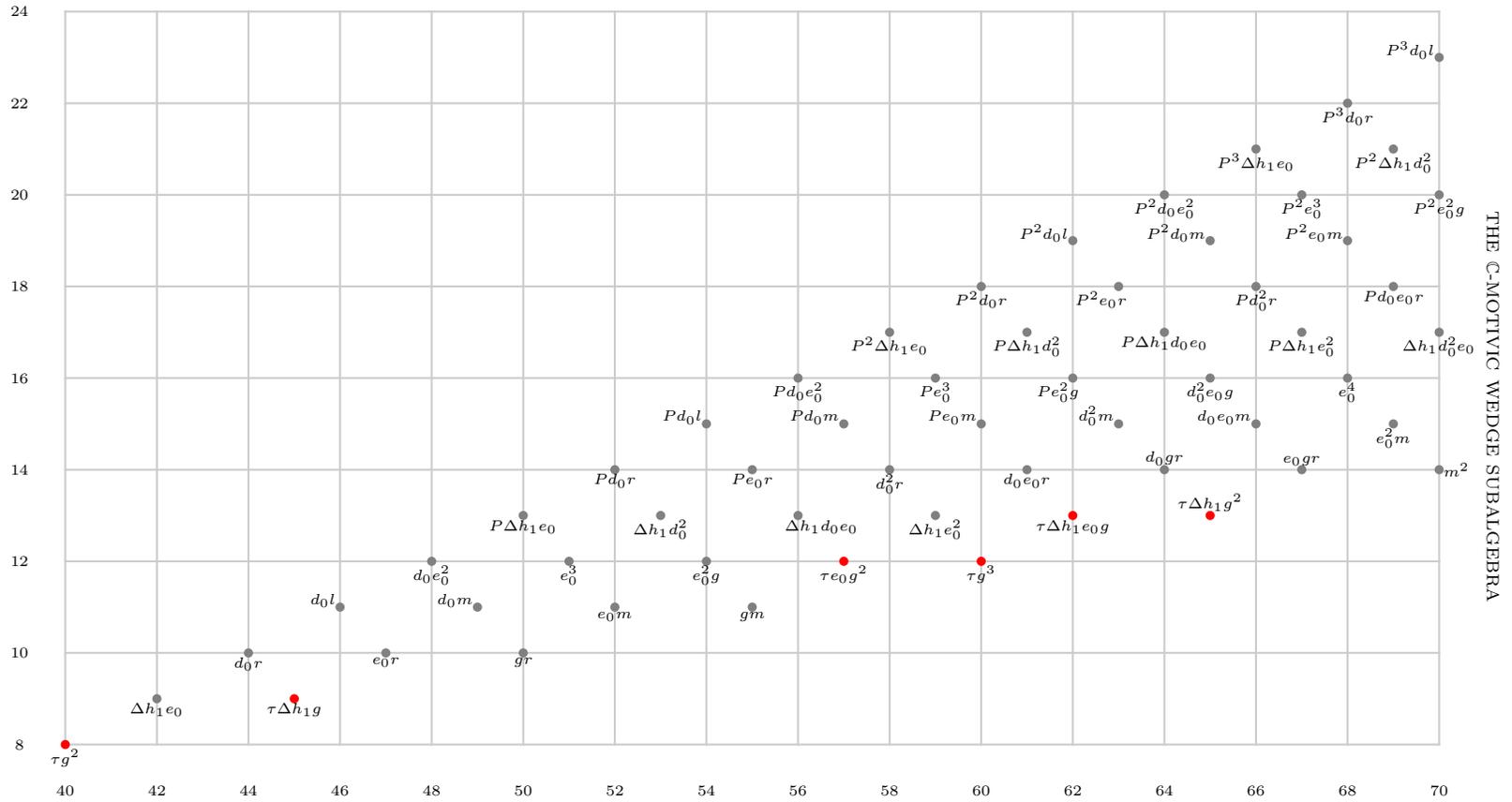

\end{landscape}

%
%

\end{document}